\newtheorem{thm}{Theorem}[section]
\newtheorem{lem}[thm]{Lemma}
\theoremstyle{definition}
\theoremstyle{remark}
\numberwithin{equation}{section}
\begin{document}
\title[Solitary-Wave Solutions of a Schr\"{o}dinger-KdV system]{Existence and Stability of
a \\ Two-Parameter Family of  Solitary Waves \\
for an NLS-KdV System} 
\author[John Albert]{John Albert}
\address{Department of Mathematics, University of Oklahoma, 601 Elm Ave,
Norman, OK 73019 USA}
\email{jalbert@ou.edu}
\author[Santosh Bhattarai]{Santosh Bhattarai}
\address{Department of Mathematics, University of Oklahoma,
601 Elm Ave, Norman, OK 73019 USA}
\email{sbhattarai@ou.edu}


\keywords {Schr\"{o}dinger-KdV system, concentration-compactness principle, solitary waves, stability, variational problems}
\thanks{\textit{MSC2010}. Primary 35B35, 35Q53, 35Q55 ; Secondary 35A15, 76B25}

\begin{abstract}
We prove existence and stability results for a two-parameter
family of solitary-wave solutions to a system in which an equation
of nonlinear Schr\"odinger type is coupled to an equation of
Korteweg-de Vries type.  Such systems model interactions between
short and long dispersive waves.   The results extend earlier
results of Angulo, Albert and Angulo, and Chen.  Our proof involves the
characterization of solitary-wave solutions as minimizers of an
energy functional subject to two constraints. To establish the
precompactness of minimizing sequences via concentrated
compactness, we establish the sub-additivity of the problem with
respect to both constraint variables jointly.
\end{abstract}

 \maketitle


\section{Introduction}

 Both the nonlinear Schr\"odinger equation
\begin{equation}\label{NLS}
iu_t + u_{xx} + |u|^q u = 0
\end{equation}
for a complex-valued function $u$ of $x \in \mathbb{R}$ and time $t$,
and the generalized Korteweg-de Vries equation
\begin{equation}\label{GKdV}
v_t +v_{xxx} + v^pv_x = 0,
\end{equation}
for a real-valued function $v$ of $x$ and $t$, are universal
models for nonlinear waves in dispersive media.  Equation
\eqref{GKdV} arises generically as a model for waves whose motion,
to first order, is governed by the linear wave equation $v_t + v_x
= 0$, but which on account of their long wavelength and small but
finite amplitude are influenced by weak nonlinear and dispersive
effects. Equation \eqref{GKdV}, on the other hand, describes the
amplitude and phase modulations of long-wavelength,
small-amplitude perturbations of a monochromatic short wave in a
dispersive medium. Discussions of the canonical nature of these
equations may be found, for example, in chapters 13 and 17 of
\cite{[W]}, chapter 2 of \cite{[N]}, or chapter 10 of \cite{[M]}.

In this paper we will consider a system describing the interaction of a nonlinear Schr\"odinger-type wave with a Korteweg-de Vries type wave:
\begin{equation}\label{NLSKDV}
\begin{aligned}
iu_{t}+u_{xx}+\tau_1 |u|^q u&=-\alpha uv \\
v_{t}+v_{xxx}+\tau_2 v^p v_{x}&=-\dfrac{\alpha}{2}(|u|^2)_{x},
\end{aligned}
\end{equation}
where $\tau_1$, $\tau_2$, and $\alpha$ are real constants.
The form of the coupling terms in system \eqref{NLSKDV} is also
universal:  the system arises as a model for interactions between
long waves and long-wavelength envelopes of short waves in a
variety of physical settings. For example, it appears in
\cite{[FO]} and \cite{[KSK]} as a model for the interaction
between long gravity waves and capillary waves on the surface of
shallow water. A system of similar form, but with the term
$v_{xxx}$ in the second equation replaced by $-v_{xxx}$, appears
in \cite{[AV]} (see also \cite{[NHMI]}) as a model for the
interaction of Langmuir waves and ion-acoustic waves in a plasma.
The status of \eqref{NLSKDV} as a generic model may be related to
the fact that it has a Hamiltonian structure in which the
Hamiltonian (the functional $E(u,v)$ defined below) has the
coupling term $\alpha v|u|^2$.  If one requires the coupling term
to be a power series in $|u|^2$ and $v$, this is the simplest
possible coupling one could expect.

We consider here the initial-value problem for \eqref{NLSKDV} on
the line, for $(u,v)$ in the space $Y=H^1_\mathbb{C}(\mathbb R)
\times H^1(\mathbb R)$. (Here $H^1(\mathbb R)$ and
$H^1_\mathbb{C}(\mathbb R)$ are $L^2$-based Sobolev spaces of
real- and complex-valued functions on the line, respectively. For
more details on our notation, see below.)  In the case when $p=1$
and $q=2$, for arbitrary values of $\tau_1$, $\tau_2$, and
$\alpha$, this problem has been shown to be well-posed locally in
time by Bekiranov et al. in \cite{[BOP]}, and global
well-posedness was proved by Corcho and Linares \cite{[CL]}. Dias,
Figueira and Oliveira \cite{[DFO]} extended the global
well-posedness result to the case when $p=1$ and $1 < q < 4$, and
their proof will work for $q=1$ as well\footnote[1]{Jo\~ao Paulo
Dias, personal communication.}. These results depend on the fact
that the following functionals are conserved under the flow of
\eqref{NLSKDV}:
\begin{equation}
\label{defE} E(u,v)=\int_{-\infty }^{\infty
}{\left(|u_{x}|^{2}+v_{x}^{2}- \beta_1 |u|^{q+2}-\beta_2
v^{p+2}-\alpha |u|^{2}v\right)\ dx},
\end{equation}
where $\beta_1 = 2\tau_1/(q+2)$ and $\beta_2 = 2\tau_2/((p+1)(p+2))$,
\begin{equation}
\label{defG} G(u,v)=\int_{-\infty }^{\infty }v^{2}\
dx+\text{Im}\int_{-\infty }^{\infty }u \overline u_{x}\ dx,
\end{equation}
where $\overline u_x$ is the complex conjugate of $u_x$ and $\text{Im}(z)$ denotes the imaginary part of $z$, and
\begin{equation}
\label{defH}
H(u)=\int_{-\infty }^{\infty }\left\vert u\right\vert ^{2}dx.
\end{equation}
In other words, if $(u,v)$ is a solution of \eqref{NLSKDV} in $Y$, then
$E(u,v)$, $G(u,v)$, and $H(u)$ are independent of time.

The methods used in this paper require the assumption that $\tau_1$, $\tau_2$
and $\alpha$ are positive, or at least non-negative. Also, in order for the term $v^p$ in \eqref{NLSKDV} or
\eqref{defE} to be defined when $v<0$, we will assume in what
follows that $p$ is a positive rational number with odd
 denominator.  Much of what is proved below should be readily
 extendable to versions of \eqref{NLSKDV} with other nonlinearities, such as
 when $v^pv_x$ is replaced in \eqref{NLSKDV} by $(|v|^p)_x$, in
 which case the analogue of Theorem \ref{existence} will hold for
 all real values of $p$ such that $1 \le p < 4$.

The purpose of this paper is to prove existence and stability results
for (coupled) solitary traveling-wave solutions of \eqref{NLSKDV}.  Such a solution is of the form
\begin{equation}\label{SO}
(u(x,t),v(x,t))=\left( e^{i\omega t}e^{ic(x-ct)/2}\phi (x-ct),\psi
(x-ct)\right) ,
\end{equation}
where $c>0,$ $\omega \in \mathbb{R}$, and $\phi:\mathbb{R} \to \mathbb{C}$ and $\psi :\mathbb{R}\to \mathbb{R}$
are functions that vanish at infinity, in the sense that $\phi \in H^1_\mathbb{C}$ and $\psi \in H^{1}$. Inserting the ansatz
\eqref{SO} into \eqref{NLSKDV}, we see that $(u,v)$ is a solution of \eqref{NLSKDV} if and
only if $\phi$ and $\psi$ satisfy the system of ordinary
differential equations
\begin{equation}\label{ODE}
\begin{aligned}
-\phi''+\sigma \phi & =\tau_1|\phi|^q\phi+\alpha \phi\psi \\
-\psi''+c\psi &=\frac{\tau_2}{p+1}\psi
^{p+1}+\dfrac{\alpha}{2}|\phi|^{2} ,
\end{aligned}
\end{equation}
where $\sigma =\omega -c^{2}/4$, and primes denote derivatives of a function of a single variable.

One question we address below is whether nontrivial solutions of
\eqref{ODE} exist.  Our existence result is obtained by studying
the variational problem of finding, for given positive values of
$s$ and $t$, minimizers of $E(u,v)$ subject to the constraints
that $\int_{-\infty }^{\infty }{|u|^2\ dx}=s$ and $\int_{-\infty
}^{\infty }{v^2\ dx}=t$.  The connection to solitary waves is due
to the fact that equations \eqref{ODE} are the Euler-Lagrange
equations for this variational problem, with $\sigma$ and $c$
playing the role of Lagrange multipliers.  In Section
\ref{sec:existence}, we use the method of concentration
compactness to prove the relative compactness of minimizing
sequences for the variational problem, and hence the existence of
minimizers.  This requires proving the strict subadditivity (see
Lemma \ref{subadd} below) of the function $I(s,t)$ defined for
$s>0$ and $t>0$ by
\begin{equation}
\label{Ist}
I(s,t)=\inf \left\{ E(f,g): (f,g) \in Y, \int_{-\infty }^{\infty }|f|^2\ dx=s, \ \text{and  $\int_{-\infty }^{\infty }g^2\ dx=t$}\right\}.
\end{equation}

For equations \eqref{NLS} or \eqref{GKdV}, the variational
problems which characterize solitary waves depend on a single
constraint parameter, and proofs of strict subadditivity are
accomplished by simple arguments, dating back to Lions' original
paper \cite{[L]}, which take advantage of homogeneities present in
the equation.  To prove strict subadditivity for the two-parameter
problem defined in \eqref{Ist}, however, seems to be more
difficult.  In \cite{[AA]}, which treats the case where $p=1$ and
$\tau_1 = 0$, it was noted that strict subadditivity, as defined
below in Lemma \ref{subadd}, holds for $\alpha = 1/6$
(corresponding to setting the parameter $q$ in \cite{[AA]} equal
to 2), and it was shown that strict subadditivity continues to
hold for $\alpha$ in some neighborhood of 1/6.  Here we are able
to extend this result to all positive values of $\alpha$, all
non-negative values of $\tau_1$, all positive valued of $\tau_2$, all $p \in [1,4)$,
and all $q \in [1,4)$.
To do so, we rely on an argument due to
Byeon \cite{[By]} and Garrisi \cite{[G]}, which exploits the fact
that the $H^1$ norms of certain functions are strictly decreased
when the mass of the function is rearranged by symmetrization.

Before stating our existence result, let us define a minimizing
sequence for $I(s,t)$ to be a sequence $(f_n,g_n)$ in $Y$ such
that
\begin{equation*}
\lim_{n\to \infty }\int_{-\infty}^\infty |f_{n}|^2=s,\ \lim_{n\to
\infty }\int_{-\infty}^\infty g_n^2=t,\text{ and }\lim_{n\to
\infty }E(f_n,g_n)=I(s,t).
\end{equation*}

Our existence result is the following.

\begin{thm}
\label{existence} Suppose $\alpha >0$, $\tau_1 \ge 0$, $\tau_2 >
0$, $1 \le q < 4$, and $1\le p<4$, where $p$ is a rational number
with odd denominator.   For $s>0$ and $t>0$,
define
\begin{equation}
\mathcal{S}_{s,t}=\left\{(\phi,\psi) \in Y: E(\phi,\psi)=I(s,t), \int_{-\infty }^{\infty }|\phi|^2\ dx=s, \text{and  $\int_{-\infty }^{\infty }\psi^2\ dx=t$}\right\}.
\end{equation}
Then the following statements are true for all $s>0$ and $t>0$.

(i)
The infimum $I(s,t)$ defined in \eqref{Ist} is finite.

(ii)
Every minimizing sequence $\{(f_{n},g_{n})\}$ for
$I(s,t)$ is relatively compact in $Y$ up to translations.  That is,
there exists a subsequence $\{(f_{n_{k}},g_{n_{k}})\}$ and a
sequence of real numbers $\{y_{k}\}$ such that $\{(f_{n_{k}}(\cdot
+y_{k}),g_{n_{k}}(\cdot +y_{k})\}$ converges strongly in $Y$ to
some $(\phi,\psi)$ in $\mathcal{S}_{s,t}$.  In particular, the set
$\mathcal{S}_{s,t}$ is non-empty.

(iii)
Each function
$(\phi,\psi) \in \mathcal{S}_{s,t}$ is a solution of \eqref{ODE}
for some $\sigma$ and $c$, and therefore when substituted into \eqref{SO} yields a solitary-wave
solution of \eqref{NLSKDV}.

(iv)
For every $(\phi,\psi)$ in $\mathcal{S}_{s,t}$, we have that $\psi(x) > 0$
for all $x \in \mathbb{R}$, and there exist a number
$\theta \in \mathbb{R}$ and a function $\tilde \phi$ such that $\tilde \phi(x)>0$
for all $x \in \mathbb R$, and $\phi(x)=e^{i\theta}\tilde\phi(x)$.  Also, the functions
$\psi$ and $\phi$ are infinitely differentiable on $\mathbb R$.
 \end{thm}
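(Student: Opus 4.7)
The plan is to prove the four parts in order, using Gagliardo--Nirenberg interpolation for (i), the concentration-compactness method (with Lemma \ref{subadd}) for (ii), the Lagrange multiplier rule for (iii), and a symmetrization/maximum-principle argument together with bootstrap for (iv).

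For (i), Gagliardo--Nirenberg bounds the three nonlinear contributions to $E(f,g)$ by $C\|f_x\|_2^{q/2}$, $C\|g_x\|_2^{p/2}$, and $C\|f_x\|_2^{1/2}$ respectively (with constants depending only on $s,t$); since $p,q<4$ all three exponents are strictly less than $2$, and Young's inequality absorbs these into a fraction of $\|f_x\|_2^2 + \|g_x\|_2^2$, giving $E(f,g) \ge -K(s,t)$.  For (ii), I would apply Lions' concentration-compactness principle to $\rho_n = |f_n|^2 + g_n^2$.  Vanishing is ruled out by checking $I(s,t) < 0$:  the rescaling $(f,g)\mapsto(\lambda^{1/2}f(\lambda\cdot),\lambda^{1/2}g(\lambda\cdot))$ makes the negative coupling term scale like $\lambda^{1/2}$ and dominate the positive $\lambda^2$ gradient term as $\lambda\to 0^+$, while any vanishing sequence has all $L^r$ norms ($r>2$) tending to zero, forcing $\liminf E(f_n,g_n)\ge 0$.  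Dichotomy is excluded by Lemma \ref{subadd}: the standard splitting would produce $\liminf E(f_n,g_n) \ge I(s_1,t_1) + I(s_2,t_2) > I(s,t)$.  The remaining compactness alternative yields translates $y_k$ and a weak $Y$-limit $(\phi,\psi)$ with $\|\phi\|_2^2 = s$ and $\|\psi\|_2^2 = t$; the nonlinear terms pass to the limit by local compact Sobolev embedding together with tail tightness, forcing $E(\phi,\psi) = I(s,t)$, and convergence of the $L^2$ norms combined with convergence of $E$ upgrades weak to strong convergence in $Y$.

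Part (iii) is the Lagrange multiplier rule on $Y$: $E$, $H$, and $(u,v)\mapsto\int v^2$ are $C^1$ with linearly independent constraint differentials at any nonzero $(\phi,\psi)$, and the resulting Euler--Lagrange equations are precisely \eqref{ODE}.  For (iv), $(|\phi|,|\psi|)$ is again a minimizer: the $L^2$ constraints and $\int|\phi|^{q+2}$ are preserved, $\int|\phi_x|^2$ does not increase by Kato's inequality $|(|\phi|)_x|\le|\phi_x|$, and (since $\beta_2,\alpha\ge 0$) the terms $-\beta_2\int\psi^{p+2}$ and $-\alpha\int|\phi|^2\psi$ only decrease under $\psi\mapsto|\psi|$.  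Applying (iii) to $(|\phi|,|\psi|)$ yields ODEs of the form $-u'' + \mu u = F(x,u)$ with $F,u\ge 0$ and $F$ locally Lipschitz in $u$; if $u(x_0)=0$ then $u'(x_0)=0$ also (local minimum), and ODE uniqueness forces $u\equiv 0$, contradicting the mass constraints.  Hence $|\phi|,|\psi|>0$ everywhere.  Tracing the equality cases, $\int|\phi|^2(|\psi|-\psi)=0$ together with $|\phi|>0$ gives $\psi=|\psi|>0$, while equality in Kato's inequality combined with the polar decomposition $\phi=\tilde\phi e^{i\theta(x)}$ (valid because $\tilde\phi=|\phi|>0$) forces $\theta'\equiv 0$, so $\phi=e^{i\theta}\tilde\phi$ for a constant phase $\theta$.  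Smoothness follows by the standard bootstrap on \eqref{ODE}, using $H^1\hookrightarrow L^\infty$ in one dimension and the positivity of $\tilde\phi$ and $\psi$ to handle the fractional powers.

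The main obstacle is part (ii), specifically the exclusion of dichotomy, which rests entirely on the strict subadditivity of Lemma \ref{subadd}; the other three parts are fairly routine once that lemma is in hand.
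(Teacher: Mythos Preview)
Your treatment of parts (i)--(iii) is essentially the same as the paper's: Lemma \ref{Ibounded} gives (i) via the same Gagliardo--Nirenberg bounds and Young's inequality; (ii) follows the concentration-compactness scheme exactly as you describe, with vanishing ruled out by $I(s,t)<0$ (Lemma \ref{Ibounded} again, via the same $\lambda^{1/2}$ vs.\ $\lambda^2$ scaling), dichotomy excluded by Lemma \ref{subadd} (see Lemma \ref{DIC} and \eqref{gameqspt}), and compactness upgraded to strong convergence just as you outline; (iii) is the Lagrange multiplier rule.

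For (iv) you take a cleaner route than the paper. The paper first proves $\sigma>0$ (via Lemma \ref{bdmixbelow}), inverts $-\partial_x^2+\sigma$ by convolution with $K_\sigma$, cites an external result for the constant phase, and uses a Wronskian argument to show $\tilde\phi$ has one sign. Your approach, passing to $(|\phi|,|\psi|)$, reading off the equality cases in $E(\phi,\psi)=E(|\phi|,|\psi|)$, and deducing the constant phase directly from equality in Kato's inequality once $|\phi|>0$ is known, is more self-contained and avoids both the convolution representation and the Wronskian step.

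One imprecision to flag: your sentence ``ODE uniqueness forces $u\equiv 0$'' is correct for $u=|\phi|$, since every term on the right-hand side of the first equation in \eqref{ODE} carries a factor of $|\phi|$, so $u\equiv 0$ is a solution and the right-hand side is Lipschitz near $u=0$. It fails as stated for $u=|\psi|$, because the term $\tfrac{\alpha}{2}|\phi|^2$ in the second equation does not vanish at $|\psi|=0$; $u\equiv 0$ is not a solution, so uniqueness gives you nothing. The fix is to do $|\phi|>0$ first (as you can), and then argue for $|\psi|$ by the second-derivative test: if $|\psi|(x_0)=0$ with $|\psi|\ge 0$, then $|\psi|'(x_0)=0$ and $|\psi|''(x_0)\ge 0$, but the equation gives $|\psi|''(x_0)=-\tfrac{\alpha}{2}|\phi(x_0)|^2<0$, a contradiction. (The paper's own phrasing at this step is similarly loose.)
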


Notice that it is obvious from the definition of the sets
$\mathcal{S}_{s,t}$ that they form a true two-parameter family, in that
$\mathcal{S}_{s_1,t_1}$ and $\mathcal{S}_{s_2,t_2}$ are disjoint
if $(s_1,t_1) \ne (s_2,t_2)$.
Previously, Dias et al. \cite{[DFO]} had proved that for $p \in
\{1, 2, 3\}$ (with $\alpha
> 3$ if $p=1$), \eqref{NLSKDV} has an infinite family of positive
bound states which decay exponentially at infinity. Compared to
the result of \cite{[DFO]}, ours has the advantages that we do not
require $\alpha > 3$ when $p=1$, and also that we obtain a true
two-parameter family of solitary waves.  In \cite{[DFO]}, nonempty
sets $\mathcal{T}_{\delta,\mu}$ of solitary waves are obtained by
minimizing $E$ subject to $\int |u|^2 + \delta v^2 = \mu$,  but it
is not clear whether $\mathcal{T}_{\delta_1,\mu_1}$ is necessarily
disjoint from $\mathcal{T}_{\delta_2,\mu_2}$ if $(\delta_1,\mu_1)
\ne (\delta_2, \mu_2)$.

A separate question is that of stability of the solutions of \eqref{ODE} as solutions of the initial-value problem for
\eqref{NLSKDV}.  For $s > 0$ and $t \in \mathbb{R}$, define
\begin{equation}
\label{Wst}
W(s,t)=\inf \{E(h,g):(h,g)\in Y,\ H(h)=s\text{ and }G(h,g)=t\}.
\end{equation}
The variational problem associated to $W(s,t)$ is suitable for
studying stability because not only the functional $E$ being
minimized, but also the constraint functionals $G$ and $H$ are
conserved for \eqref{NLSKDV}.  If minimizers $(\Phi,\psi)$ for
$W(s,t)$ exist, they satisfy the Euler-Lagrange equations
\begin{equation}
\begin{aligned}
-\Phi'' +\omega \Phi + c i \Phi' &=  \tau_1|\Phi|^q \Phi +  \alpha \Phi\psi \\
-\psi'' + c \psi &= \frac{\tau_2 \psi^{p+1}}{p+1} +
\frac{\alpha}{2}|\Phi|^2
\end{aligned}
\label{ELforWst}
\end{equation}
where the real numbers $c$ and $\omega$ are the Lagrange
multipliers.  These equations are satisfied by $\Phi$ and $\psi$
if and only if the functions $u$ and $v$ defined by
\begin{equation}\label{SO2}
(u(x,t),v(x,t))=\left( e^{i\omega t}\Phi(x-ct),\psi(x-ct)\right)
\end{equation}
are solutions of the NLS-KdV system \eqref{NLSKDV}.  That is,
solutions $(\Phi,\psi)$ of the variational problem for $W(s,t)$
are solitary-wave profiles, and \eqref{SO} is recovered from
\eqref{SO2} by setting $\Phi(x)=e^{icx/2}\phi(x)$.

We have the following stability result.

\begin{thm}\label{stability}Suppose $\alpha >0$, $\tau_1 \ge 0$, $\tau_2 > 0$, $1 \le q < 4$, and $p=1$.
For $s>0$ and $t \in \mathbb{R}$, define
\begin{equation}
\mathcal{F}_{s,t}=\left\{(\Phi,\psi) \in Y: E(\Phi,\psi)=W(s,t),
H(\Phi)=s, \text{and  $G(\Phi,\psi)=t$}\right\} \label{Fst}.
\end{equation}
Then the following statements are true for all $s > 0$ and $t \in \mathbb R$.

(i) The infimum $W(s,t)$ defined in \eqref{Wst} is finite.

(ii) Every minimizing sequence $\{(h_{n},g_{n})\}$ for
$W(s,t)$ is relatively compact in $Y$ up to translations.  That
is, if
\begin{equation*}
\lim_{n\to \infty }H(h_n)=s,\ \lim_{n\to \infty}
G(h_n,g_n)=t,\text{ and }\lim_{n\to \infty }E(h_n,g_n)=W(s,t),
\end{equation*}
then there is a subsequence $\{(h_{n_k},g_{n_k})\}$ and a sequence
of real numbers $\{y_k\}$ such that
$\{h_{n_k}(\cdot+y_k),g_{n_k}(\cdot+y_k)\}$ converges strongly in
$Y$ to some $(\Phi,\psi) \in \mathcal{F}_{s,t}$.  In particular, the set
$\mathcal{F}_{s,t}$ is non-empty.

(iii)
 Each $(\Phi,\psi) \in
\mathcal{F}_{s,t}$ is a solution of \eqref{ELforWst} for some
$\omega$ and $c$, and therefore when substituted into \eqref{SO2} yields a solitary-wave solution
of \eqref{NLSKDV}.

(iv) For every $(\Phi,\psi) \in
\mathcal{F}_{s,t}$, let $a=\|\psi\|^2$ and $b=(t-a)/s$.  Then there exist $\theta \in \mathbb R$
and a real-valued function $\tilde \phi$ such that $(\tilde \phi,\psi) \in \mathcal S_{s,a}$ and
\begin{equation}
\Phi(x)=e^{i(-bx + \theta)}\tilde\phi(x)
\label{formofphi}
\end{equation}
on $\mathbb R$.  Further, if $\tau_1 = 0$, then $a>0$, $\psi(x)>0$ for all $x \in \mathbb R$, and
we can take $\tilde\phi$ to be everywhere positive on $\mathbb R$.

(v)  The set $\mathcal{F}_{s,t}$ is a
stable set of initial data for \eqref{NLSKDV}, in the following sense: for every $\epsilon >0$, there
exists $\delta>0$ such that if $(h_0 ,g_0 )\in Y$,
\begin{equation*}
\inf_{(\Phi,\psi)\in \mathcal{F}_{s,t}}\|(h_0,g_0)-(\Phi ,\psi)\|_{Y}<\delta ,
\end{equation*}
and $(u(x,t),v(x,t))$ is the
solution of \eqref{NLSKDV}
with
\begin{equation*}
(u(x,0),v(x,0)) =(h_0(x),g_0(x)),
\end{equation*}
 then for all $t \ge 0$,
\begin{equation*}
\inf_{(\Phi,\psi)\in \mathcal{F}_{s,t}}\|(u(\cdot,t),v(\cdot,t))-(\Phi,\psi)\|_{Y}<\epsilon.
\end{equation*}

Furthermore,
the sets $\mathcal{F}_{s,t}$
form a true two-parameter family, in that $\mathcal{F}_{s_1,t_1}$
and $\mathcal{F}_{s_2,t_2}$ are disjoint if $(s_1,t_1) \ne
(s_2,t_2)$.
\end{thm}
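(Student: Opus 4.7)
My plan is to reduce the two-constraint minimization problem defining $W(s,t)$ to the one already handled by Theorem \ref{existence}, by optimizing over the phase of $\Phi$. For $(\Phi,\psi) \in Y$ with $H(\Phi)=s$, write $\Phi(x)=e^{i\theta(x)}|\Phi(x)|$. Direct computation gives
\[
\int|\Phi_x|^2\,dx=\int(\theta')^2|\Phi|^2\,dx+\int(|\Phi|_x)^2\,dx,\qquad \text{Im}\int\Phi\,\overline{\Phi_x}\,dx=-\int\theta'|\Phi|^2\,dx,
\]
so with $a:=\|\psi\|_2^2$, the constraint $G(\Phi,\psi)=t$ becomes $\int\theta'|\Phi|^2\,dx=a-t$. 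Cauchy--Schwarz then yields $\int(\theta')^2|\Phi|^2\,dx\ge(t-a)^2/s$, with equality iff $\theta'$ is constant. Combined with $E(\Phi,\psi)=\int(\theta')^2|\Phi|^2\,dx+E(|\Phi|,\psi)$, this produces the reduction formula
\[
W(s,t)=\inf_{a\ge 0}\left[\,I(s,a)+\frac{(t-a)^2}{s}\,\right],
\]
and equality forces $\theta(x)=-bx+\theta_0$ with $b=(t-a)/s$ and $(|\Phi|,\psi)\in\mathcal{S}_{s,a}$.

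Once this identity is in hand, parts (i)--(iv) drop out. For (i), the Gagliardo--Nirenberg lower bound $I(s,a)\ge -C(s)(1+a^{5/3})$ (valid for $p=1$) combined with the quadratic penalty $(t-a)^2/s$ gives $W(s,t)>-\infty$. For (ii), I first establish strict subadditivity of $W$: picking near-optimal $a_i\ge 0$ in the reduction for $W(s_i,t_i)$,
\[
W(s_1+s_2,t_1+t_2)\le I(s_1+s_2,a_1+a_2)+\frac{(t_1+t_2-a_1-a_2)^2}{s_1+s_2} < I(s_1,a_1)+I(s_2,a_2)+\frac{(t_1-a_1)^2}{s_1}+\frac{(t_2-a_2)^2}{s_2},
\]
the strict inequality coming from combining Lemma \ref{subadd} (strict subadditivity of $I$) with the elementary convexity bound $(u_1+u_2)^2/(s_1+s_2)\le u_1^2/s_1+u_2^2/s_2$. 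Lions' concentration-compactness principle then yields relative compactness of minimizing sequences up to translations, using Gagliardo--Nirenberg to rule out vanishing as in Theorem \ref{existence}(ii); the only new check is that $G$ is asymptotically additive under support separation, which follows because the cross-term contributions to $\text{Im}\int u\overline{u_x}$ vanish in that limit. Part (iii) is the standard Lagrange-multiplier computation giving \eqref{ELforWst}. For (iv), any $(\Phi,\psi)\in\mathcal{F}_{s,t}$ must saturate both inequalities in the reduction, yielding \eqref{formofphi} with $(\tilde\phi,\psi)\in\mathcal{S}_{s,a}$; smoothness and positivity then transfer from Theorem \ref{existence}(iv), while $a>0$ under $\tau_1=0$ will require a trial-function argument showing $I(s,a_0)+(t-a_0)^2/s<t^2/s$ for some $a_0>0$.

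For part (v) I would run the standard concentration-compactness stability argument. Global well-posedness in $Y$ for $p=1$, $1\le q<4$ is known from \cite{[CL]}, \cite{[DFO]}, and $E$, $G$, $H$ are conserved under the flow. Suppose stability fails: there exist $\epsilon_0>0$, initial data $(h_0^n,g_0^n)\in Y$ with $\inf_{(\Phi,\psi)\in\mathcal{F}_{s,t}}\|(h_0^n,g_0^n)-(\Phi,\psi)\|_Y\to 0$, and times $t_n\ge 0$ with $\inf_{(\Phi,\psi)\in\mathcal{F}_{s,t}}\|(u_n(\cdot,t_n),v_n(\cdot,t_n))-(\Phi,\psi)\|_Y\ge\epsilon_0$. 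Continuity of $E$, $G$, $H$ gives $E(h_0^n,g_0^n)\to W(s,t)$, $H(h_0^n)\to s$, and $G(h_0^n,g_0^n)\to t$; by conservation the same limits hold for $(u_n(\cdot,t_n),v_n(\cdot,t_n))$, which is therefore a minimizing sequence for $W(s,t)$ in the sense of part (ii). Part (ii) then produces a subsequence that, after translation, converges to an element of $\mathcal{F}_{s,t}$; since $E$, $G$, $H$ and hence $\mathcal{F}_{s,t}$ are translation-invariant, this contradicts the $\epsilon_0$-separation. The final disjointness of the $\mathcal{F}_{s,t}$ for distinct $(s,t)$ is immediate from $H$ and $G$ taking the prescribed values.

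The most delicate step is the strict subadditivity of $W$ at the boundary $a_i=0$, where Lemma \ref{subadd} is silent and the Cauchy--Schwarz step alone is non-strict when $(t_1-a_1)/s_1=(t_2-a_2)/s_2$. Handling this requires showing that the auxiliary infimum $\inf_{a\ge 0}[I(s,a)+(t-a)^2/s]$ is always attained at (or approached by) strictly positive $a$ --- a trial-function argument that exploits the coupling term $-\alpha|u|^2v$ to drive $I(s,a)<0$ as soon as $a>0$, so that competing against the boundary value at $a=0$ succeeds. A secondary nuisance is defining $\theta$ where $|\Phi|$ has zeros, which one handles by working at the level of $u\overline{u_x}$ and $|u||u|_x$, since these polar identities remain well-defined throughout $\mathbb{R}$.
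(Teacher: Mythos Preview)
Your reduction formula $W(s,t)=\inf_{a\ge 0}[I(s,a)+(t-a)^2/s]$ and your treatment of parts (i), (iii), (iv), (v), and the disjointness claim are essentially the same as the paper's (indeed your disjointness argument is cleaner: membership in $\mathcal{F}_{s,t}$ already pins down $H(\Phi)=s$ and $G(\Phi,\psi)=t$, so disjointness is immediate).

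The substantive difference is in part (ii). You propose to establish strict subadditivity of $W$ in the variables $(s,t)$ and rerun Lions' concentration--compactness machinery directly on the $W$-problem. The paper does something more economical: given a minimizing sequence $(h_n,g_n)$ for $W(s,t)$, it passes to a subsequence with $\|g_n\|^2\to a$, sets $b=(t-a)/s$, and observes via the phase identity $E(e^{ibx}h_n,g_n)=E(h_n,g_n)-b^2s$ (Lemma \ref{decom}) that $(e^{ibx}h_n,g_n)$ is a minimizing sequence for $I(s,a)$. Theorem \ref{existence}(ii) then gives compactness for free, and undoing the phase yields compactness for $(h_n,g_n)$. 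No new concentration--compactness argument is needed, and no subadditivity of $W$ has to be proved.

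Your direct route can be made to work, but it is genuinely more laborious than you indicate. Beyond the $a_i=0$ boundary issue you flag, there are two further complications. First, the natural concentration density $\rho_n=|h_n|^2+g_n^2$ does not have constant total mass (only $H$ and $G$ are fixed, not $\|g_n\|^2$), so you must first pass to a subsequence with $\|g_n\|^2\to a$ before setting up the Lions trichotomy --- at which point you are essentially doing the paper's reduction anyway. Second, in the dichotomy case the split $(s,t)=(s_1,t_1)+(s_2,t_2)$ may produce $s_1=0$; you then need to argue that $t_1\ge 0$ (which does follow, since $|\text{Im}\int h_n^{(1)}\overline{(h_n^{(1)})_x}|\le \|h_n^{(1)}\|\|(h_n^{(1)})_x\|\to 0$ forces $t_1=a_1\ge 0$) and handle $W(0,t_1)=I(0,t_1)$ separately, invoking Lemma \ref{subadd} in a form not covered by your displayed inequality. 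None of this is fatal, but it shows why the paper's transform-and-reuse approach is the cleaner path: it absorbs all these edge cases into the already-proved Theorem \ref{existence}.
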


We remark that, if it is assumed that that \eqref{NLSKDV} is globally well-posed in $Y$
when $1 \le p < 4/3$ (where $p$ is rational with odd denominator), then the above
stability result extends to these values of $p$ as well, with the same proof.

From the definition of the variational problem for $W(s,t)$ it is
clear that the sets $\mathcal{F}_{s,t}$ are invariant under the
transformation
\begin{equation*}
(\Phi(x),\psi(x)) \mapsto (e^{i\theta}\Phi(x-\xi),\psi(x-\xi)),
\end{equation*}
 for every pair of real numbers $\theta$ and $\xi$,
and so are at least two-dimensional in size. On the other hand,
for a given solitary-wave profile $(g,h)$ in $\mathcal{F}_{s,t}$,
the orbit $\mathcal{O}=\{(u(x,t),v(x,t)):t \in \mathbb{R}\}$ of
the corresponding solitary wave is seen from \eqref{SO2} to be
given by
\begin{equation*}
\mathcal{O}=\left\{(e^{ict}\Phi(x-ct),\psi(x-ct)):t \in
\mathbb{R}\right\},
\end{equation*}
and hence is a proper (one-dimensional) subset of
$\mathcal{F}_{s,t}$.  Therefore Theorem \ref{stability} is
somewhat weaker than an orbital stability result for the solitary
waves in $\mathcal{F}_{s,t}$.

According to part (iv) of Theorem \ref{stability}, in the case when $\tau_1 =0$, each element $(\Phi,\psi)$ of
$\mathcal F_{s,t}$ has a non-trivial second component $\psi$.  However, we have not been able to establish
this in the case when $\tau_1 >0$.  Even if $\mathcal F_{s,t}$ were to consist solely of solitary waves
of the form $(\Phi,0)$, however, it would still be of interest to know that $\mathcal F_{s,t}$ is stable in
the sense described in part (v).

Theorem \ref{stability} generalizes the stability results of
\cite{[C]}, which treated the case when $\tau_1 = 0$, $p=1$, and
$\alpha = 1/6$; and of \cite{[AA]}, which treated the case when
$\tau_1 = 0$, $p=1$, and $\alpha$ is in some neighborhood of $1/6$.
We also note the interesting paper of Angulo \cite{[An]}, which
proves stability by a different method in the case when $\tau_1 =
0$, $p=1$, $\alpha >0$, and the wavespeed $\sigma$ appearing in
\eqref{ODE} is sufficiently small.

\bigskip
The remainder of the paper is organized as follows.  In Section \ref{sec:existence}, after a number
of preparatory lemmas, including Byeon and Garrisi's rearrangement lemma, we prove assertions (i) through (iv)
of Theorem \ref{existence}.  Section \ref{sec:stability} also begins with some preparatory lemmas, and then
concludes with a proof of assertions (i) through (v) of Theorem \ref{stability}.

\bigskip

\textit{Notation}. For $1\leq p\leq \infty$, we denote by $L^{p}$
the space of all measurable functions $f$ on $\mathbb{R}$ for
which the norm $|f|_{p}$ is finite, where
\begin{equation*}
|f|_{p}=\left( \int_{-\infty }^{\infty }|f|^{p}\ dx\right)^{1/p}\text{ \ for }1\leq p<\infty
\end{equation*}
and $|f|_{\infty }$ is the essential supremum of $|f|$ on $\mathbb{R}$.
Because the $L^2$ norm appears frequently below, we use the special notation $\|f\|$ for it.  That is,
\begin{equation*}
\|f\|=\left( \int_{-\infty }^{\infty }|f|^2\ dx\right)^{1/2}.
\end{equation*}

We say that a function $f$ defined on $\mathbb{R}$ is $C^\infty$
if $f$ and all its derivatives of all orders exist everywhere on
$\mathbb{R}$.

We denote by $H_{\mathbb{C}}^{1}=H_{\mathbb{C}}^{1}(\mathbb R)$
the Sobolev space of all complex-valued functions $f$ defined on
$\mathbb R$ such that $f$ and its distributional derivative $f'$
are both in $L^2$. The norm $\|\cdot\|_1$ on $H_{\mathbb{C}}^1$ is
defined by
\begin{equation*}
\|f\|_1=\left( \int_{-\infty }^{\infty }\left( |f|^2 +
|f'|^2\right)\ dx\right) ^{1/2}.
\end{equation*}  We denote the space
of all real-valued functions $f$ in $H_{\mathbb{C}}^{1}$ by $H^1$,
and we define $Y$ to be the product space
\begin{equation*}
Y = H_{\mathbb{C}}^{1}\times H^{1},
\end{equation*}
furnished with the product norm, which we denote by $\left\Vert .\right\Vert _Y$.  That is,
\begin{equation*}
\|(h,g)\|_Y^2 = \|h\|_1^2 + \|g\|_1^2.
\end{equation*}

We occasionally use below the operation of convolution of two functions, here denoted by the symbol $\star$
and defined by
\begin{equation}
\label{defconvol}
 f \star g (x) = \int_{-\infty}^\infty
f(x-y)g(y)\ dy.
\end{equation}

In the estimates below, the letter $C$ will frequently be used to denote various constants
 whose actual values are not important for our purposes.  In particular, the value of $C$ may differ from line to line.

\section{Existence of Solitary-Wave Solutions}
\label{sec:existence}

In this section, we prove Theorem \ref{existence}.  We assume throughout the section, unless otherwise stated,
 that the assumptions of Theorem \ref{existence}
hold for the constants $\alpha$, $\tau_1$, $\tau_2$, $p$, $q$, $s$, and $t$.

\begin{lem} \label{Ibounded} Every minimizing sequence for $I(s,t)$ is bounded in $Y$.
Furthermore, one has $-\infty <I(s,t)<0$.
\end{lem}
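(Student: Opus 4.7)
The plan is to first establish the lower bound $I(s,t) > -\infty$ together with the boundedness of minimizing sequences by using sharp Gagliardo-Nirenberg inequalities to control the nonlinear terms in $E$, and then to establish the upper bound $I(s,t) < 0$ by constructing a one-parameter scaled family of admissible test functions.

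For the lower bound, I would invoke the one-dimensional Gagliardo-Nirenberg inequalities
\begin{equation*}
|f|_{r}^{r} \le C \|f_x\|^{(r-2)/2} \|f\|^{(r+2)/2} \quad (r \ge 2),
\end{equation*}
applied to $u$ with $r = q+2$ and to $v$ with $r = p+2$, together with the Sobolev embedding $|f|_\infty \le C \|f\|^{1/2}\|f_x\|^{1/2}$ used to control the coupling term via
\begin{equation*}
\left|\int |u|^2 v\, dx\right| \le |v|_\infty \|u\|^2.
\end{equation*}
Under the constraints $\|u\|^2 = s$ and $\|v\|^2 = t$ these bounds combine to give an estimate of the form
\begin{equation*}
E(u,v) \ge \|u_x\|^2 + \|v_x\|^2 - C_1 \|u_x\|^{q/2} - C_2 \|v_x\|^{p/2} - C_3 \|v_x\|^{1/2},
\end{equation*}
with constants $C_i = C_i(s,t)$. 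Because $p, q < 4$, every negative term involves $\|u_x\|$ or $\|v_x\|$ raised to a power strictly less than $2$, so Young's inequality absorbs them into $\tfrac12(\|u_x\|^2 + \|v_x\|^2)$ at the cost of an additive constant. This yields $E(u,v) \ge \tfrac12(\|u_x\|^2 + \|v_x\|^2) - C$ on the constraint manifold, which proves $I(s,t) > -\infty$ and, because $E(f_n,g_n)$ is eventually bounded above along any minimizing sequence, forces a uniform bound on $\|(f_n)_x\|$ and $\|(g_n)_x\|$. Combined with the $L^2$ constraints, this is the required boundedness in $Y$.

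For the upper bound, I would fix any $(\phi, \psi) \in Y$ with $\|\phi\|^2 = s$, $\|\psi\|^2 = t$, and $\psi \ge 0$ not identically zero, and consider the rescaled pair
\begin{equation*}
\phi_\lambda(x) = \lambda^{1/2} \phi(\lambda x), \qquad \psi_\lambda(x) = \lambda^{1/2} \psi(\lambda x),
\end{equation*}
which preserves both $L^2$ constraints. A direct change of variables gives
\begin{equation*}
E(\phi_\lambda, \psi_\lambda) = \lambda^2(\|\phi_x\|^2 + \|\psi_x\|^2) - \beta_1 \lambda^{q/2} |\phi|_{q+2}^{q+2} - \beta_2 \lambda^{p/2} |\psi|_{p+2}^{p+2} - \alpha \lambda^{1/2} \int |\phi|^2 \psi\, dx.
\end{equation*}
Since $\alpha > 0$ and $\int |\phi|^2 \psi\, dx > 0$, and since $q/2, p/2 \ge 1/2$ with the two other negative terms (when $q=1$ or $p=1$) only reinforcing the sign, the leading behavior as $\lambda \to 0^+$ comes from $-\alpha \lambda^{1/2}\int |\phi|^2 \psi\, dx$, which dominates the positive $\lambda^2$ term. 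Hence $E(\phi_\lambda, \psi_\lambda) < 0$ for all sufficiently small $\lambda > 0$, yielding $I(s,t) < 0$.

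I do not anticipate a serious obstacle. The only place requiring care is verifying that the exponents appearing in the Gagliardo-Nirenberg estimates are strictly sub-quadratic before invoking Young's inequality; this is precisely what the hypotheses $p<4$ and $q<4$ encode, and were either threshold reached, $E$ could well fail to be bounded below on the constraint set.
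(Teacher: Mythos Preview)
Your proposal is correct and follows essentially the same route as the paper: Gagliardo--Nirenberg to control each nonlinear term by a sub-quadratic power of the derivative norms (using $p,q<4$), hence a uniform bound on minimizing sequences and $I(s,t)>-\infty$; then the $L^2$-preserving scaling $\phi_\lambda(x)=\lambda^{1/2}\phi(\lambda x)$ to get $I(s,t)<0$.

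Two cosmetic differences worth noting. For the coupling term the paper estimates $\int|f|^2|g|\le |f|_4^2\|g\|\le C\|f_x\|^{1/2}$ via H\"older and Gagliardo--Nirenberg on $f$, whereas you use $|v|_\infty\|u\|^2\le C\|v_x\|^{1/2}$; both give a power $1/2$ and lead to the same conclusion. For the upper bound, you should choose $\phi$ so that $\int|\phi|^2\psi\,dx>0$ actually holds (for instance take $\phi>0$ as well, as the paper does); merely taking $\psi\ge 0$ is not quite enough since $|\phi|^2$ could in principle be supported where $\psi$ vanishes.
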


\begin{proof}
First, observe that if $ \{(f_{n},g_{n})\}$ is a minimizing
sequence for $I(s,t)$, then $\|f_{n }\|$ and $\|g_{n }\|$ are
bounded. From the Gagliardo-Nirenberg inequality (see, for
example, Theorem 9.3 of \cite{[F]}),  we have that
\begin{equation}
\label{GLforf} |f_{n}|_{q+2}^{q+2}\leq
C\|f_{nx}\|^{q/2}\|f_{n}\|^{(q+4)/2},
\end{equation}
and since $\|f_n\|$ is constant, it follows that
\begin{equation}
\label{fLqbound}
 |f_n|_{q+2}^{q+2} \le C\|(f_n,g_n)\|_Y^{q/2}.
\end{equation}
Similarly,
\begin{equation}
\label{gLpbound}|g_{n}|_{p+2}^{p+2} \leq C \|g_{nx}\|^{p/2} \le
C\|(f_n,g_n)\|_{Y}^{p/2}.
\end{equation}
(Here, as throughout the paper, $C$ denotes various constants
which may depend on $s$ and $t$ but are independent of $f_n$ and
$g_n$.) Moreover, the same estimate \eqref{fLqbound} with $q$ replaced by 2 shows that
\begin{equation*}
 |f_n|_4^4  \le C\|f_{nx}\|\cdot \|f_n\|^3 \le C \|f_{nx}\|,
\end{equation*}
so by H\"older's inequality,
\begin{equation}
\label{mixtermbound} \int_{-\infty }^{\infty }|f_n|^2|g_n|\ dx
\leq |f_n|_4^2\cdot\|g_n\|  \leq C \|f_{nx}\|^{1/2} \le
C\|(f_n,g_n)\|_Y^{1/2}.
\end{equation}

Now
\begin{equation*}
\begin{aligned}
& \|(f_n,g_n)\|_Y^2 =\|f_n\|_1^2+\|g_n\|_1^2 \\
&\ \ =E(f_n,g_n)+\int_{-\infty}^{\infty }
\left(\beta_1|f_n|^{q+2}+\beta_2 g_n^{p+2}+\alpha |f_n|^2 g_n \right) \ dx +\|f_n\|^2+\|g_n\|^2,
\end{aligned}
\end{equation*}
and $E(f_n,g_n)$ is bounded since $ \{(f_n,g_n)\}$ is a
minimizing sequence.  Therefore from \eqref{fLqbound},
\eqref{gLpbound}, and \eqref{mixtermbound} it follows that
\begin{equation*}
\|(f_n,g_n)\|_Y^2 \le C\left(1 +
\|(f_n,g_n)\|_Y^{1/2}+\|(f_n,g_n)\|_Y^{q/2}+\|(f_n,g_n)\|_Y^{p/2}\right).
\end{equation*}
Since $q/2<2$ and $p/2 <2$, we deduce that $\|(f_n,g_n)\|_Y$ is
bounded.

Once we have shown that $ \{(f_n,g_n)\}$ is bounded in $Y$, a
finite lower bound on $E(f_n,g_n)$ also follows immediately from
\eqref{fLqbound}, \eqref{gLpbound}, and \eqref{mixtermbound}.  So
$I(s,t)>-\infty$.

Finally, to see that $I(s,t)<0$, choose $(f,g)\in Y$ such that
$\|f\|^{2}=s$, $\|g\|^{2}=t$, and $f(x)>0$ and $g(x)>0$ for all $x
\in \mathbb{R}$. For each $\theta >0,$ the functions $f_{\theta
}(x)=\theta ^{1/2}f(\theta x)$ and $g_{\theta }(x)=\theta
^{1/2}g(\theta x)$ satisfy $\|f_{\theta}\|^{2}=s$, $\|g_{\theta
}\|^{2}=t$, and
\begin{equation*}
\begin{aligned}
E(f_{\theta },g_{\theta }) &=\int_{-\infty }^{\infty
}\left(|f_{\theta x}|^{2}+g_{\theta x}^2 -\beta_1 |f_{\theta
}|^{q+2}-\beta_2 g_{\theta }^{p+2}
-\alpha|f_{\theta }|^{2}g_{\theta }\right) \ dx \\
&\leq\theta ^{2}\int_{-\infty }^{\infty
}\left(|f_x|^{2}+g_x^{2}\right)\ dx -\theta ^{1/2}\int_{-\infty
}^{\infty }\alpha |f|^{2}g \ dx.
\end{aligned}
\end{equation*}
Hence, by taking $\theta $ sufficiently small, we get $E(f_{\theta
},g_{\theta })<0,$ proving that $I(s,t)<0$.
\end{proof}

\begin{lem} \label{bdgnxbelow}
Suppose $(f_n,g_n)$ is a minimizing sequence for $I(s,t)$, where
$t>0$ and $s \ge 0$.  (Note that we do not require $s >0$ here.)
Then there exists $\delta>0$ such that $\|g_{nx}\|\geq \delta
$ for all sufficiently large $n$.
\end{lem}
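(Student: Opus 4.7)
The plan is to argue by contradiction. Suppose that along a subsequence $\|g_{n,x}\|\to 0$. Since $\{(f_n,g_n)\}$ is bounded in $Y$ by Lemma~\ref{Ibounded}, the Sobolev interpolation $|g|_\infty^2\le 2\|g\|\,\|g_x\|$ gives $|g_n|_\infty\to 0$. This makes three pieces of $E(f_n,g_n)$ vanish in the limit: $\|g_{n,x}\|^2\to 0$, $|g_n|_{p+2}^{p+2}\le |g_n|_\infty^p\|g_n\|^2\to 0$, and $\int |f_n|^2 g_n\,dx\le |g_n|_\infty\|f_n\|^2\to 0$. Hence $E(f_n,g_n)=\|f_{n,x}\|^2-\beta_1|f_n|_{q+2}^{q+2}+o(1)$, and this tends to $I(s,t)<0$ again by Lemma~\ref{Ibounded}.

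The degenerate cases are immediate. If $s=0$ then $\|f_n\|\to 0$ and the estimate \eqref{GLforf} forces $|f_n|_{q+2}^{q+2}\to 0$; if instead $\tau_1=0$ then $\beta_1=0$ trivially. In either case the previous relation collapses to $\|f_{n,x}\|^2\to I(s,t)<0$, which is absurd. Otherwise $s>0$ and $\tau_1>0$, and combined with $\beta_1>0$ the relation forces $|f_n|_{q+2}^{q+2}\ge c_0>0$ for all large $n$; by Lions's non-vanishing lemma on $\mathbb R$, after translating each pair $(f_n,g_n)$ (which leaves $E$ and the constraints invariant) I may fix $R_0,\delta_0>0$ with $\int_{-R_0}^{R_0}|f_n|^2\,dx\ge\delta_0$ for all large $n$.

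Now fix a bump $\chi\in C_c^\infty(\mathbb R)$ with $\chi\equiv 1$ on $[-1,1]$, and for $R\ge R_0$ set $g_R^*(x)=\lambda_R\chi(x/R)$ with $\lambda_R>0$ chosen so that $\|g_R^*\|^2=t$. A direct scaling check shows $\lambda_R=O(R^{-1/2})$, $\|(g_R^*)_x\|^2=O(R^{-2})$, $|g_R^*|_{p+2}^{p+2}=O(R^{-p/2})$, and since $\chi(x/R)\equiv 1$ on $[-R,R]\supset[-R_0,R_0]$ the crucial uniform bound $\int|f_n|^2 g_R^*\,dx\ge \lambda_R\delta_0\ge c R^{-1/2}$ holds for all large $n$. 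Combining this with the fact that the $g_n$-dependent pieces of $E(f_n,g_n)$ are $o(1)$,
\begin{equation*}
E(f_n,g_R^*)-E(f_n,g_n)\le C R^{-2}-\alpha c R^{-1/2}+o(1),
\end{equation*}
which is $\le -\eta<0$ for $R$ fixed sufficiently large and $n$ large. A harmless renormalization of $f_n$ so that $\|f_n\|^2=s$ exactly changes $E$ by only $o(1)$, so $I(s,t)\le E(f_n,g_R^*)\le E(f_n,g_n)-\eta+o(1)\to I(s,t)-\eta$, a contradiction. The main obstacle is obtaining the uniform-in-$n$ lower bound on the coupling $\int|f_n|^2 g_R^*$, for which Lions's non-vanishing is indispensable; the final $R^{-2}$-versus-$R^{-1/2}$ balance between gradient cost and coupling gain is then routine.
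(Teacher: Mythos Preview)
Your argument is correct, but it takes a genuinely different and considerably more involved route than the paper's.  Both proofs begin the same way: assume $\|g_{nx}\|\to 0$ along a subsequence, use interpolation to kill the $g_n^{p+2}$ and $|f_n|^2 g_n$ terms, and arrive at
\[
I(s,t)=\lim_{n\to\infty}\int\bigl(|f_{nx}|^2-\beta_1|f_n|^{q+2}\bigr)\,dx.
\]
At this point the two arguments diverge.  You derive the contradiction from the \emph{coupling} term: you need a uniform-in-$n$ lower bound on $\int|f_n|^2 g_R^*$, which forces the case split on $(s,\tau_1)$ and the appeal to Lions's non-vanishing lemma to localize $f_n$ before the $R^{-2}$ versus $R^{-1/2}$ balance can be made.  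The paper instead derives the contradiction from the gKdV nonlinearity $-\beta_2 g^{p+2}$ alone: with $\psi_\theta(x)=\theta^{1/2}\psi(\theta x)$ for any nonnegative $\psi$ with $\|\psi\|^2=t$, one simply computes
\[
\eta:=\theta^2\|\psi_x\|^2-\beta_2\theta^{p/2}\int\psi^{p+2}<0\qquad\text{for $\theta$ small (since $p<4$),}
\]
and then, discarding the nonnegative coupling term, $I(s,t)\le E(f_n,\psi_\theta)\le \int(|f_{nx}|^2-\beta_1|f_n|^{q+2})\,dx+\eta$, contradicting the displayed identity.  No case analysis, no Lions, and no use of $I(s,t)<0$.

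So your remark that ``Lions's non-vanishing is indispensable'' is not quite right: it is indispensable only for the mechanism you chose.  The trade-off is that your approach would still work if $\tau_2=0$ (in the non-degenerate case $s>0$, $\tau_1>0$), whereas the paper's argument genuinely needs $\tau_2>0$.  Under the paper's standing hypotheses, though, the scaling argument through the $\beta_2$ term is much cleaner.
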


\begin{proof}
If the conclusion is not true, then by passing to a subsequence we may assume there exists
a minimizing sequence for which $\displaystyle \lim_{n \to \infty} \|g_{nx}\| = 0$.
From \eqref{gLpbound}  it then follows that
\begin{equation*}
\lim_{n \to \infty} \int_{-\infty}^\infty g_n^{p+2}\ dx = 0.
\end{equation*}
Moreover, because of the elementary estimate
\begin{equation*}
|g_n|_\infty \le C \|g_n\|^{1/2}\|g_{nx}\|^{1/2},
\end{equation*}
we can write, in place of \eqref{mixtermbound},
\begin{equation}
\int_{-\infty }^{\infty }|f_{n}|^{2}|g_n|\ dx
\leq C \|f_n\|^2 \|g_n\|^{1/2}\|g_{nx}\|^{1/2} \le C\|g_{nx}\|^{1/2},
\end{equation}
from which it follows that
\begin{equation*}
\lim_{n\to \infty }\int_{-\infty }^{\infty }|f_n|^2g_n\ dx=0.
\end{equation*}
Hence
\begin{equation}
\label{Ibbelow}
\begin{aligned}
I(s,t) &=\lim_{n\to \infty }E(f_n,g_n) \\
&=\lim_{n\to \infty }\int_{-\infty }^{\infty }\left( |f_{nx}|^{2}-\beta_1
|f_n|^{q+2}\right) \ dx.
\end{aligned}
\end{equation}

Now let $\psi$ be any non-negative function such that $\|\psi\|^2=t$.  For every $\theta >0$,
the function $\psi_\theta(x)=\theta^{1/2}\psi(\theta x)$ satisfies $\|\psi_\theta\|^2=t$,
so that $I(s,t) \le E(f_n,\psi_\theta)$ for all $n$.  On the other hand, if we define
\begin{equation}
\eta = \theta^2\int_{-\infty }^{\infty }\psi_x^2\ dx-
\beta_2\theta^{p/2}\int_{-\infty }^{\infty }\psi^{p+2}\ dx,
\end{equation}
then since $p/2 < 1$, by fixing $\theta >0$ sufficiently small we can arrange that
\begin{equation}
\eta < 0.
\label{elzero}
\end{equation}

Then for all $n\in\mathbb{N}$,
\begin{equation*}
\begin{aligned}
I(s,t) &\leq E(f_n,\psi_\theta) \\
&=\int_{-\infty}^{\infty}\left( |f_{nx}|^2
-\beta_1|f_n|^{q+2}
-\theta^{1/2}\alpha |f_n|^2\psi \right) \ dx + \eta \\
& \le \int_{-\infty}^{\infty } \left(|f_{nx}|^2-\beta_1
|f_n|^{q+2}\right) \ dx + \eta.
\end{aligned}
\end{equation*}
Therefore
\begin{equation*}
\begin{aligned}
I(s,t) &\leq \lim_{n \to \infty}\int_{-\infty}^{\infty
}\left(|f_{nx}|^{2}-\beta_1 |f_n|^{q+2}\right)\ dx
+ \eta,
\end{aligned}
\end{equation*}
which contradicts \eqref{Ibbelow} and \eqref{elzero}.
\end{proof}

\begin{lem}
Suppose $g(x)$ is an integrable function on $\mathbb R$ such that
\begin{equation}
\int_{-\infty}^\infty g(x)\ dx >0.
\end{equation}
 Then for every $s >0$ there exists $f \in H^1$ such that $\|f\|^2=s$ and
\begin{equation*}
\int_{-\infty}^{\infty}\left(f_x^2 -\alpha f^2 g\right)\ dx < 0.
\end{equation*}
\label{minfforg}
\end{lem}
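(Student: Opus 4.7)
The plan is a scaling argument: we construct $f$ by taking a fixed smooth profile and dilating it to be wide and flat, exploiting the fact that spreading mass reduces the kinetic energy faster than it reduces the coupling to a fixed integrable function whose total mass is positive.

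First I would fix once and for all a function $\phi \in C_c^\infty(\mathbb R)$ with $\phi \ge 0$, $\phi(0) > 0$, and $\|\phi\|^2 = s$. For each parameter $R > 1$, define the rescaled function
\begin{equation*}
f_R(x) = R^{-1/2}\,\phi(x/R).
\end{equation*}
A direct change of variables shows $\|f_R\|^2 = s$ for every $R$, so $f_R$ is admissible, and
\begin{equation*}
\int_{-\infty}^{\infty} (f_R'(x))^2\, dx \;=\; R^{-2}\int_{-\infty}^{\infty} (\phi'(y))^2\, dy,
\end{equation*}
so the kinetic term is $O(R^{-2})$ as $R \to \infty$.

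Next I would analyze the coupling term. By the same change of variables,
\begin{equation*}
\int_{-\infty}^{\infty} f_R(x)^2\, g(x)\, dx \;=\; R^{-1}\int_{-\infty}^{\infty} \phi(x/R)^2\, g(x)\, dx.
\end{equation*}
Since $\phi$ is continuous at $0$, the integrand $\phi(x/R)^2 g(x)$ tends pointwise to $\phi(0)^2 g(x)$ as $R \to \infty$, and it is dominated by the integrable majorant $|\phi|_\infty^2\,|g(x)|$. The Lebesgue dominated convergence theorem therefore gives
\begin{equation*}
\lim_{R \to \infty}\int_{-\infty}^{\infty} \phi(x/R)^2\, g(x)\, dx \;=\; \phi(0)^2 \int_{-\infty}^{\infty} g(x)\, dx \;=:\; M > 0,
\end{equation*}
where positivity uses both $\phi(0) > 0$ and the hypothesis $\int g > 0$. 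Consequently, for all sufficiently large $R$, $\int f_R^2 g \ge \tfrac{1}{2} M\, R^{-1}$.

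Combining the two estimates, for $R$ large we obtain
\begin{equation*}
\int_{-\infty}^{\infty}\bigl(f_R'^{\,2} - \alpha\, f_R^2\, g\bigr)\, dx \;\le\; R^{-2}\int_{-\infty}^{\infty}\phi'^{\,2}\, dx \;-\; \tfrac{\alpha M}{2}\, R^{-1}.
\end{equation*}
Because $\alpha > 0$ and the $R^{-1}$ term dominates $R^{-2}$ as $R \to \infty$, the right-hand side is strictly negative once $R$ is chosen large enough, and choosing such an $R$ and setting $f = f_R$ completes the proof. There is no real obstacle here; the only delicate point is verifying the dominated-convergence hypothesis, which is immediate from $\phi \in C_c^\infty$ and $g \in L^1$.
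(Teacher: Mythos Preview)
Your proof is correct and is essentially identical to the paper's own argument: the paper also rescales a fixed smooth compactly supported profile (writing $\psi_\theta(x)=\theta^{1/2}\psi(\theta x)$ with $\theta\to 0$, which is your $f_R$ with $R=1/\theta$), computes the kinetic term as $O(\theta^2)$, and uses dominated convergence to show the coupling term behaves like $\theta\int g$. The only cosmetic difference is that the paper normalizes $\psi(0)=1$ whereas you keep $\phi(0)>0$ as a multiplicative constant.
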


\begin{proof}
Let $\psi$ be an arbitrary smooth, non-negative function with compact support such that $\psi(0)=1$ and $\|\psi\|^2=s$, and for $\theta > 0$
define $\psi_\theta(x)=\theta^{1/2}\psi(\theta x)$.  Then $\|\psi_\theta\|^2 = s$, and
\begin{equation}
\label{testf}
\int_{-\infty}^\infty \left(\psi_{\theta x}^2-\psi_\theta^2g\right)\ dx = \theta^2 \int_{-\infty}^\infty \psi_x^2\ dx - \theta \int_{-\infty}^\infty\psi(\theta x)^2g(x)\ dx.
\end{equation}
But, by the Dominated Convergence Theorem,
\begin{equation*}
\lim_{\theta \to 0} \int_{-\infty}^\infty \psi(\theta x)^2g(x)\ dx = B,
\end{equation*}
where $\displaystyle B=\int_{-\infty}^\infty g(x)\ dx > 0$.  Therefore from \eqref{testf} it follows that
\begin{equation}
\int_{-\infty}^\infty \left(\psi_{\theta x}^2-\psi_\theta^2g\right)\ dx \le \theta^2 \int_{-\infty}^\infty \psi_x^2\ dx - \theta B/2
\end{equation}
for all $\theta$ in some neighborhood of 0.  Since the quantity on the right-hand side can be made negative by taking $\theta$
sufficiently small, the desired $f$ can be found by taking $f=\psi_\theta$ for a sufficiently small value of $\theta$.
\end{proof}

\begin{lem}
\label{minforJ}
 Define $J:H^1 \to \mathbb R$ by
\begin{equation}
\label{defJ}
J(g)=\int_{-\infty}^\infty\left(g_x^2 - \beta_2 g^{p+2}\right)\ dx.
\end{equation}
Let $t>0$, and let $\{g_n\}$ be any sequence of functions in $H^1$ such that
\begin{equation*}
\lim_{n \to \infty} \|g_n\|^2 = t,
\end{equation*}
and
\begin{equation*}
\lim_{n \to \infty} J(g_n) = \inf \ \left\{J(g): g \in H^1\ {\rm
and }\ \|g\|^2 = t\right\}.
\end{equation*}
Then there exists a subsequence $\{g_{n_k}\}$ and a sequence of
real numbers $y_k$ such that $g_{n_k}(x+y_k)$ converges strongly in
$H^1$ norm to $g_0(x)$, where
\begin{equation}
\label{g0}
 g_0(x)=\left(\frac{\lambda}{\beta_2}\right)^{1/p}{\rm
sech}^{2/p}\left(\frac{\sqrt{\lambda} px}{2}\right),
\end{equation}
and $\lambda > 0$ is chosen so that $\|g_0\|^2 = t$. In particular,
\begin{equation}
J(g_0)=\inf \ \left\{J(g): g \in H^1\ {\rm and }\ \|g\|^2 =
t\right\}. \label{g0ismin}
\end{equation}
\end{lem}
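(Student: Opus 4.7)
The plan is to treat this lemma as the standard one-constraint concentration-compactness problem for the generalized KdV ground state, closing with an explicit identification of the minimizer via the profile ODE. Write $m(t):=\inf\{J(g) : g\in H^1,\ \|g\|^2 = t\}$. As preliminary reductions: since $J(|g|)\le J(g)$ (because $\int|g|^{p+2}\ge\int g^{p+2}$ and $\beta_2>0$) while $\||g|\|=\|g\|$, I may replace each $g_n$ by $|g_n|$ and assume $g_n\ge 0$; boundedness of $\{g_n\}$ in $H^1$ then follows by the Gagliardo-Nirenberg argument used in Lemma \ref{Ibounded}, since $p/2<2$; and the $L^2$-preserving scaling $g_\theta(x)=\theta^{1/2}g(\theta x)$, applied to a fixed positive test function, gives $J(g_\theta)=\theta^2\int g_x^2\,dx-\beta_2\theta^{p/2}\int g^{p+2}\,dx$, which is negative for small $\theta>0$, so $m(t)<0$.

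Next I would apply Lions' concentration-compactness principle to the densities $\rho_n=g_n^2$. Vanishing is excluded because it forces $|g_n|_{p+2}\to 0$ (by the standard Lions-type interpolation between local $L^2$-mass and $\|g_n\|_1$), and hence $\liminf J(g_n)\ge 0$, contradicting $m(t)<0$. Dichotomy is excluded by strict subadditivity of $m$, which I would derive by scaling. Optimizing $\theta\mapsto J(g_\theta)$ yields the explicit formula
\begin{equation*}
m(t)=K_p\,\sup\bigl\{ B(g)^{4/(4-p)}\, A(g)^{-p/(4-p)} : \|g\|^2=t\bigr\},
\end{equation*}
with $A(g)=\int g_x^2\,dx$, $B(g)=\int g^{p+2}\,dx$, and a constant $K_p<0$ depending only on $p$ and $\beta_2$. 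The further dilation $g\mapsto\sqrt{\lambda}\,g$ scales the argument of the supremum by $\lambda^{(p+4)/(4-p)}$, so $m(\lambda t)=\lambda^k m(t)$ with $k=(p+4)/(4-p)>1$ for $p\in[1,4)$. Since $m(1)<0$ and $(t_1+t_2)^k>t_1^k+t_2^k$ for $k>1$, this gives $m(t_1+t_2)<m(t_1)+m(t_2)$ for all $t_1,t_2>0$.

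Concentration therefore prevails, producing a subsequence, translations $y_k$, and a limit $g_0\in H^1$ with $g_0\ge 0$, $\|g_0\|^2=t$, and $g_{n_k}(\cdot+y_k)\rightharpoonup g_0$ weakly in $H^1$. Weak lower-semicontinuity of $\int g_x^2$, combined with strong $L^{p+2}$ convergence of the translated sequence (from local compactness of $H^1\hookrightarrow L^{p+2}$ together with the tightness furnished by concentration), gives $J(g_0)\le m(t)$, so $g_0$ is a minimizer. Strong $H^1$ convergence then follows from $\|g_{n_k}\|\to\|g_0\|$ and $J(g_{n_k})\to J(g_0)$, the latter of which (combined with the strong $L^{p+2}$ convergence) forces $\|g_{n_k,x}\|\to\|g_{0,x}\|$.

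Finally I would identify $g_0$. The Lagrange multiplier rule yields $-g_0''+\lambda g_0=\beta_2 g_0^{p+1}$ for some $\lambda\in\mathbb R$; pairing with $g_0$ and using $J(g_0)<0$ shows $\lambda>0$. Elliptic regularity gives smoothness of $g_0$, and a maximum-principle / ODE-uniqueness argument upgrades $g_0\ge 0,\ g_0\not\equiv 0$ to $g_0>0$ on $\mathbb R$. Multiplying the ODE by $g_0'$ and integrating yields $(g_0')^2=\lambda g_0^2-(2\beta_2/(p+2))\,g_0^{p+2}$; classical quadrature shows the unique positive decaying solution up to translation is precisely \eqref{g0}, and the constraint $\|g_0\|^2=t$ determines $\lambda$ uniquely. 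The main obstacle I expect is the strict-subadditivity derivation in paragraph two; once that is in hand, the remaining steps are by now standard.
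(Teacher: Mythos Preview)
Your proposal is correct and follows the same approach as the paper, which treats the compactness step as a standard single-constraint concentration-compactness exercise (citing \cite{[A]} and \cite{[AA]}) and then identifies the limit via the Euler--Lagrange ODE. Two minor corrections: the Lagrange equation for $J$ under the constraint $\|g\|^2=t$ is $-g_0''+\lambda g_0=\tfrac{p+2}{2}\,\beta_2\,g_0^{p+1}$, so the first integral should read $(g_0')^2=\lambda g_0^2-\beta_2 g_0^{p+2}$; and replacing $g_n$ by $|g_n|$ at the outset changes the sequence about which the lemma makes its conclusion, so it is cleaner to run the concentration-compactness argument on the original $\{g_n\}$ (vanishing and dichotomy are excluded without any sign assumption) and deduce the sign of the limit $\psi$ afterward from $J(|\psi|)\le J(\psi)=m(t)$ together with the ODE.
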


\begin{proof}  The proof that some subsequence of $g_n$ must converge, after suitable translations,
strongly in $H^1$ norm is by now a standard exercise in the use of the method of
concentration compactness.  A proof in the case $p=1$ appears, for example, in Theorem 2.9 of \cite{[A]}, or
Theorem 3.13 of \cite{[AA]}. A similar
proof, with obvious alterations, works for all $p \in [1,4)$ because for such $p$ the
Gagliardo-Nirenberg inequality \eqref{gLpbound} permits one to
obtain a uniform bound on $\|g_n\|_1$.

Denote the translated subsequence of $\{g_n\}$ which converges strongly by $\{g_{n_k}(x+\tilde y_k)\}$,
and let $\psi \in H^1$ be its limit.
Then $\psi$ must satisfy
\begin{equation}
\label{psiismin}
J(\psi)=\inf \ \left\{J(g): g \in H^1\ {\rm and }\ \|g\|^2 =
t\right\},
\end{equation}
and must also be a solution of the Euler-Lagrange equation
\begin{equation}
-2\psi'' -(p+2)\beta_2 \psi^{p+1} = -2\lambda \psi \label{elforg}
\end{equation}
for some real number $\lambda$. Equation \eqref{elforg} can be
explicitly integrated to show that, in order for $\psi$ to be in
$H^1$, $\lambda$ must be positive and $\psi$ must be a translate of
the function $g_0$ defined in \eqref{g0}, say $\psi(x)=g_0(x+y_0)$ for
some $y_0 \in \mathbb R$. Then \eqref{g0ismin} follows from \eqref{psiismin}.
Also, defining $y_k=\tilde y_k-y_0$, we have that $g_{n_k}(x+y_k)$
converges to $g_0$ in $H^1$.
\end{proof}

\begin{lem}
\label{Is0} Suppose $\beta_1 > 0$, and define $\tilde
J:H^1_{\mathbb C} \to \mathbb R$ by
\begin{equation}
\tilde J(f)=\int_{-\infty}^\infty\left(|f_x|^2 - \beta_1
|f|^{q+2}\right)\ dx.
\end{equation}
Let $s>0$, and let $\{f_n\}$ be any sequence of functions in $H^1_{\mathbb C}$ such that
\begin{equation*}
\lim_{n \to \infty} \|f_n\|^2 = s,
\end{equation*}
 and
\begin{equation*}
\lim_{n \to \infty} \tilde J(f_n) = \inf \ \left\{\tilde J(f): f \in H^1_{\mathbb C}\ {\rm
and }\ \|f\|^2 = s\right\}.
\end{equation*}
Then there exists a subsequence $\{f_{n_k}\}$ of $\{f_n\}$, a sequence of
real numbers $y_k$, and a real number $\theta$ such that $e^{-i\theta}f_{n_k}(x+y_k)$ converges strongly in
$H^1_{\mathbb C}$ norm to $f_0(x)$, where
\begin{equation}
\label{f0}
 f_0(x)=\left(\frac{\lambda}{\beta_1}\right)^{1/q}{\rm
sech}^{2/q}\left(\frac{\sqrt{\lambda} px}{2}\right),
\end{equation}
and $\lambda > 0$ is chosen so that $\|f_0\|^2 = s$. In particular,
\begin{equation}
\tilde J(f_0)=\inf \ \left\{\tilde J(f): f \in H^1_{\mathbb C}\
{\rm and }\ \|f\|^2 = s\right\}. \label{f0ismin}
\end{equation}
\end{lem}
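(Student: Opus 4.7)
The plan is to reduce the complex-valued minimization to the real-valued minimization of Lemma \ref{minforJ} (after the parameter substitution $(p,\beta_2)\to(q,\beta_1)$) by the diamagnetic inequality, and then recover the missing phase from the vanishing of the diamagnetic deficit together with the strict positivity of the limit $f_0$.

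For the reduction: for any $f\in H^1_{\mathbb C}$ one has $|f|\in H^1$ with $\big||f|_x\big|^2 \le |f_x|^2$ a.e.\ and $\||f|\|=\|f\|$, hence $\tilde J(|f|)\le \tilde J(f)$. Since $\tilde J$ evaluated at a real-valued function coincides with the functional $J$ of Lemma \ref{minforJ} after replacing $(p,\beta_2)$ by $(q,\beta_1)$, that lemma gives
\[
\inf\{\tilde J(f): f\in H^1_{\mathbb C},\ \|f\|^2=s\}=\tilde J(f_0),
\]
with $f_0$ as in \eqref{f0}. Consequently, for any minimizing sequence $\{f_n\}$ in $H^1_{\mathbb C}$ the sequence $\{|f_n|\}$ is a real minimizing sequence, and Lemma \ref{minforJ} produces real numbers $y_k$ and a subsequence $\{f_{n_k}\}$ with $|f_{n_k}|(\cdot+y_k)\to f_0$ in $H^1$. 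The pinching $\tilde J(f_0) \le \tilde J(|f_n|) \le \tilde J(f_n)\to \tilde J(f_0)$ further forces
\[
\int_{-\infty}^{\infty}\!\left(|f_{nx}|^2 - \big||f_n|_x\big|^2\right)dx \longrightarrow 0.
\]

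To recover the phase, set $h_k = f_{n_k}(\cdot + y_k)$ and write $h_k = |h_k|e^{i\theta_k}$ with $\theta_k$ measurable (defined as $0$ where $h_k=0$). A direct computation gives $|h_{kx}|^2 - \big||h_k|_x\big|^2 = |h_k|^2(\theta_k')^2$ a.e., so $\int |h_k|^2(\theta_k')^2\,dx\to 0$. Since $H^1\hookrightarrow C(\mathbb R)$, $|h_k|\to f_0$ uniformly on bounded intervals; as $f_0>0$ everywhere, $|h_k|$ is bounded below by a positive constant on each fixed $[-M,M]$ for $k$ large, forcing $\theta_k'\to 0$ in $L^2_{\mathrm{loc}}(\mathbb R)$. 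Letting $\bar\theta_k$ denote the mean of $\theta_k$ on $[-1,1]$, the Poincar\'e--Wirtinger inequality gives $\theta_k - \bar\theta_k\to 0$ in $L^2_{\mathrm{loc}}$; passing to a further subsequence we may assume $\bar\theta_k\to \theta \pmod{2\pi}$ for some $\theta\in\mathbb R$.

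Finally, set $v_k = e^{-i\bar\theta_k}h_k$. Then $|v_k|=|h_k|\to f_0$ in $H^1$, and
\[
\|v_{kx}\|^2=\|h_{kx}\|^2=\big\||h_k|_x\big\|^2+\int |h_k|^2(\theta_k')^2\,dx \longrightarrow \|f_{0x}\|^2,
\]
so $\|v_k\|_1\to\|f_0\|_1$. The pointwise a.e.\ convergence $v_k=|h_k|e^{i(\theta_k-\bar\theta_k)}\to f_0$ (along a further subsequence), combined with the $H^1_{\mathbb C}$-boundedness of $v_k$, yields $v_k\rightharpoonup f_0$ weakly in $H^1_{\mathbb C}$, and the norm convergence upgrades this to strong convergence. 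Hence $e^{-i\theta}h_k = e^{i(\bar\theta_k-\theta)}v_k\to f_0$ in $H^1_{\mathbb C}$. The principal obstacle is the phase-control step: the polar angle $\theta_k$ is only a measurable function on $\mathbb R$, and one must verify that a single \emph{global} constant phase $\theta$ suffices to align the minimizing sequence with $f_0$; the crucial ingredient that makes this work is the vanishing of the diamagnetic deficit $\int |h_k|^2(\theta_k')^2\,dx$ together with the strict positivity of $f_0$.
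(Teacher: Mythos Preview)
Your argument is correct, but it is genuinely different from the paper's.  The paper runs concentration compactness directly on the \emph{complex} functional $\tilde J$, exactly as in Lemma~\ref{minforJ}, obtaining a strong $H^1_{\mathbb C}$ limit $\phi$ of a translated subsequence; it then invokes the Euler--Lagrange equation for $\phi$ and cites Cazenave (Theorem~8.1.6 of \cite{[Ca]}) to conclude that any $H^1_{\mathbb C}$ solution of that scalar ODE has the form $e^{i\theta}\tilde\phi$ with $\tilde\phi>0$, whence $\tilde\phi$ is a translate of $f_0$.  In other words, the paper extracts the constant phase \emph{a posteriori} from the structure of the limiting ODE.  You instead reduce to the real problem first via the diamagnetic inequality, apply Lemma~\ref{minforJ} to $|f_n|$, and then recover the phase \emph{at the level of the minimizing sequence} from the vanishing of the diamagnetic deficit $\int|h_k|^2(\theta_k')^2$ together with the strict positivity of $f_0$.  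Your route avoids the appeal to \cite{[Ca]} and is self-contained, at the price of the more delicate phase-control step you flag (care with the polar decomposition on $\{h_k\ne 0\}$, Poincar\'e--Wirtinger on compacta, and the weak-plus-norm upgrade to strong convergence).  The paper's route is shorter because the phase rigidity is packaged into a cited ODE result; yours is more elementary and makes explicit why a \emph{single} constant $\theta$ suffices.
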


\begin{proof}  The comments in the first paragraph of the proof of Lemma \ref{minforJ} apply as well
to $\tilde J$ as to $J$, since the proof alluded to there works here with no formal changes:  the only
difference is that now $\|f_n\|$ represents the modulus of a complex-valued function.  Therefore we
can conclude that there exists a subsequence $\{f_{n_k}\}$ and a sequence of real numbers $\tilde y_k$ such that
$\{f_{n_k}(x+\tilde y_k)\}$ converges strongly in $H^1_{\mathbb C}$ to a (now complex-valued) function $\phi$
for which
\begin{equation}
\label{phismin}
\tilde J(\phi)=\inf \ \left\{J(f): f \in H^1_{\mathbb C}\ {\rm and }\ \|f\|^2 =
t\right\},
\end{equation}
and for which the Euler-Lagrange equation
\begin{equation}
-2\phi'' -(q+2)\beta_1 \phi^{q+1} = -2\lambda \phi \label{elforphi}
\end{equation}
holds, where here $\lambda$ is again a real number.

It is proved in Theorem 8.1.6 of \cite{[Ca]} that for every solution $\phi$ of $\eqref{elforphi}$, there
exists a real number $\theta$ such that $\phi(x)=e^{i\theta}\tilde \phi(x)$ on $\mathbb R$,
where $\tilde \phi(x)$ is real-valued and positive (the same argument used there is also given below
in the proof of part (iv) of Theorem \ref{existence}).  The $H^1$ function $\tilde \phi$ also satisfies \eqref{elforphi},
and so, as in the proof of Lemma \ref{minforJ}, it follows that there exists $y_0 \in \mathbb R$ such that $\tilde \phi(x)=f_0(x+y_0)$
on $\mathbb R$, where $f_0$ is as defined in \eqref{f0}.
Since $\tilde J(\phi)=\tilde J(\tilde \phi)$, then \eqref{f0ismin} follows from \eqref{phismin}. Also, if we define $y_k =\tilde y_k-y_0$, then we have that
$e^{-i\theta}f_{n_k}(x+y_k)$ converges in $H^1_{\mathbb C}$ to $f_0$.
\end{proof}

\begin{lem} \label{bdfnxbelow}
Suppose $(f_n,g_n)$ is a minimizing sequence for $I(s,t)$, where $s>0$ and $t \ge 0$.
If $t > 0$, or $t=0$ and $\beta_1 >0$, then there exists $\delta>0$ such that $\|f_{nx}\|\geq
\delta$ for all sufficiently large $n$.  If $t=0$ and $\beta_1 = 0$, then $I(s,t)=0$.
\end{lem}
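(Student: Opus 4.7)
The plan is to treat the two regimes separately: argue by contradiction when $t>0$ or when $t=0$ and $\beta_1>0$, and compute $I(s,0)$ directly when $t=0$ and $\beta_1=0$. In the contradiction cases, suppose after passing to a subsequence that $\|f_{nx}\|\to 0$. The Gagliardo--Nirenberg inequality \eqref{GLforf} with $\|f_n\|^2\to s$ gives $|f_n|_{q+2}^{q+2}\to 0$, while the same inequality with $q$ replaced by $2$ yields $|f_n|_4^4\le C\|f_{nx}\|\to 0$; H\"older's inequality then produces $\int|f_n|^2|g_n|\,dx\le |f_n|_4^2\|g_n\|\to 0$, using that $\|g_n\|$ stays bounded by Lemma \ref{Ibounded}. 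These vanishing estimates reduce the energy identity to
\begin{equation*}
I(s,t)=\lim_{n\to\infty}E(f_n,g_n)=\lim_{n\to\infty}J(g_n),
\end{equation*}
with $J$ as in \eqref{defJ}.

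The decisive step, when $t>0$, is to exhibit an admissible pair whose energy is strictly below $I(s,t)$. Let $g_0$ be the strictly positive sech-profile minimizer of $J$ on $\{\|g\|^2=t\}$ furnished by Lemma \ref{minforJ}, choose a smooth $\phi\in H^1$ with $\|\phi\|^2=s$ and $\phi(0)\ne 0$, and set $\phi_\theta(x)=\theta^{1/2}\phi(\theta x)$. A direct scaling computation yields
\begin{equation*}
E(\phi_\theta,g_0)-J(g_0)=\theta^2\|\phi_x\|^2-\beta_1\theta^{q/2}|\phi|_{q+2}^{q+2}-\alpha\theta\int_{-\infty}^{\infty}|\phi(\theta x)|^2 g_0(x)\,dx.
\end{equation*}
By the Dominated Convergence Theorem the last integral converges to $|\phi(0)|^2|g_0|_1>0$, so the mixed contribution is negative of order $\theta$. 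Since $q/2<2$ the positive kinetic term is of strictly higher order, and hence the right-hand side is negative for all sufficiently small $\theta>0$. Meanwhile, the renormalization $\tilde g_n=(\sqrt{t}/\|g_n\|)g_n$ lies in $\{\|g\|^2=t\}$ and, by boundedness of $\|g_{nx}\|$ and $|g_n|_{p+2}^{p+2}$, satisfies $J(\tilde g_n)-J(g_n)\to 0$, so that $\inf\{J(g):\|g\|^2=t\}\le J(\tilde g_n)\to I(s,t)$. Chaining the inequalities,
\begin{equation*}
I(s,t)\le E(\phi_\theta,g_0)<J(g_0)=\inf\{J(g):\|g\|^2=t\}\le I(s,t),
\end{equation*}
which is impossible.

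The cases $t=0$ are more direct. When $\beta_1>0$, the pair $(\phi_\theta,0)$ with $\|\phi_\theta\|^2=s$ gives $E(\phi_\theta,0)=\theta^2\|\phi_x\|^2-\beta_1\theta^{q/2}|\phi|_{q+2}^{q+2}<0$ for small $\theta$, so $I(s,0)<0$; on the other hand, under the contradiction hypothesis $\|f_{nx}\|\to 0$, the estimates above combined with $\|g_n\|\to 0$ (which forces $|g_n|_{p+2}^{p+2}\to 0$ and $|g_n|_\infty\le C\|g_n\|^{1/2}\|g_{nx}\|^{1/2}\to 0$, hence $\int|f_n|^2|g_n|\,dx\to 0$) give $\liminf_n E(f_n,g_n)\ge \liminf_n\|g_{nx}\|^2\ge 0$, contradicting $I(s,0)<0$. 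When $\beta_1=0$ and $t=0$, the same $L^2$--$L^\infty$ estimate on $g_n$ applies to any minimizing sequence (no contradiction hypothesis needed), so $E(f_n,g_n)=\|f_{nx}\|^2+\|g_{nx}\|^2+o(1)\ge -o(1)$ and $I(s,0)\ge 0$; combined with the test pair bound $I(s,0)\le E(\phi_\theta,0)=\theta^2\|\phi_x\|^2\to 0$, this yields $I(s,0)=0$.

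The principal obstacle is the subcase $t>0$ with $\beta_1=0$, where the bare spreading trick for $\phi_\theta$ alone produces no negative term to beat the $\theta^2$ kinetic cost. The resolution is to couple $\phi_\theta$ with the \emph{specific} positive profile $g_0$ from Lemma \ref{minforJ} and exploit the mixed term $-\alpha\int|\phi_\theta|^2 g_0\,dx$, which scales as $\theta$ and so dominates the positive $\theta^2$ contribution; this is the one place where the strict positivity of $g_0$ and the hypothesis $\alpha>0$ are essential.
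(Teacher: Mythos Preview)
Your proof is correct and follows essentially the same route as the paper: argue by contradiction, use Gagliardo--Nirenberg to kill the $f$-contributions and reduce to $I(s,t)=\lim J(g_n)\ge J(g_0)$, then couple a spread test function to the positive profile $g_0$ to produce an admissible pair with energy strictly below $J(g_0)$. The only differences are cosmetic: the paper packages your spreading construction for $\phi_\theta$ against $g_0$ as a separate lemma (Lemma~\ref{minfforg}), and in the $t=0$ cases it uses the exact constraint $g=0$ rather than working through the minimizing-sequence formulation as you do; your explicit renormalization $\tilde g_n$ to pass from $\lim J(g_n)$ to $J(g_0)\le I(s,t)$ is a detail the paper leaves implicit.
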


\begin{proof}  As in the proof of Lemma \eqref{bdgnxbelow}, we argue by contradiction.  If the conclusion is not true, then by
passing to a subsequence we may assume there exists a minimizing sequence for which $\displaystyle \lim_{n \to \infty} \|f_{nx}\| = 0$.
From \eqref{GLforf} and \eqref{mixtermbound} we have that
\begin{equation}
\lim_{n \to \infty} \int_{-\infty}^\infty |f_n|^2 g_n = \lim_{n \to \infty} \int_{-\infty}^\infty |f_n|^{q+2} = 0,
\end{equation}
so
\begin{equation}
I(s,t)=\lim_{n \to \infty} \int_{-\infty}^\infty \left(g_{nx}^2 -\beta_2 g_n^{p+2}\right)\ dx.
\label{fgoesaway}
\end{equation}

In case $t>0$, we have from \eqref{g0ismin} that
\begin{equation}
I(s,t) \ge J(g_0),
\label{IgeJ}
\end{equation}
where $g_0$ is as \eqref{g0}, and therefore $g_0$ is integrable with positive integral.  Therefore, by
 Lemma \ref{minfforg} there exists $f \in H^1$ such that $\|f\|^2 =s$ and
\begin{equation}
\int_{-\infty}^\infty \left(f_x^2 - \alpha f^2 g_0\right)\ dx < 0.
\end{equation}
It follows that
\begin{equation}
I(s,t) \le E(f,g_0) =\int_{-\infty}^\infty \left( f_x^2 -\alpha f^2 g_0 -\beta_1 |f|^{q+2}\right)\ dx +J(g_0) < J(g_0),
\end{equation}
which contradicts \eqref{IgeJ}.

In case $t=0$ and $\beta_1 > 0$, then by \eqref{fgoesaway}, $I(s,t)=0$.  On the other hand $I(s,t)=I(s,0)$ is the infimum
of
\begin{equation}
E(f,0)=\int_{-\infty}^\infty\left(|f_x|^2-\beta_1
|f|^{q+2}\right)\ dx \label{efzero}
\end{equation}
over all $f \in H^1_{\mathbb C}$ satisfying $\|f\|^2=s$.  Let $f$
be any non-negative function in $H^1$ such that $\|f\|^2 = s$, and
define $f_\theta(x)=\theta^{1/2}f(\theta x)$.  Then
\begin{equation}
E(f_\theta,0)=\theta^2\int_{-\infty}^\infty f_x^2\ dx - \beta_1 \theta^{q/2}\int_{-\infty}^\infty f^{q+2}\ dx,
\end{equation}
and since $q<4$, we can make the right-hand side negative by choosing a sufficiently small value of $\theta$.  Therefore
$I(s,t)<0$, giving a contradiction.

Finally, if $t=0$ and $\beta_1=0$, then $I(s,t)=I(s,0)$ is the infimum of
\begin{equation}
E(f,0)=\int_{-\infty}^\infty|f_x|^2\ dx
\end{equation}
over all $f$ in $H^1_{\mathbb C}$ such that $\|f\|^2=s$.  This
infimum is clearly non-negative, but on the other hand if we
replace $f$ by $f_\theta$, as defined in the preceding paragraph,
then we can make $E(f_\theta,0)$ arbitrarily small by taking
$\theta$ sufficiently small.  Hence $I(s,t)=0$.
 \end{proof}

\begin{lem} \label{bdmixbelow}
Suppose $(f_n,g_n)$ is a minimizing sequence for $I(s,t)$, where
$s>0$ and $t >0$.  Then there exists $\delta>0$ such that for all
sufficiently large $n$,
\begin{equation*}
\int_{-\infty}^\infty\left(|f_{nx}|^2 -\beta_1|f_n|^{q+2}
-\alpha|f_n|^2g_n\right)\ dx \le -\delta.
\end{equation*}
\end{lem}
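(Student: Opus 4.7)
My proof plan exploits the identity
$$\int_{-\infty}^\infty\!\!\left(|f_{nx}|^2 -\beta_1|f_n|^{q+2} -\alpha|f_n|^2g_n\right)dx \;=\; E(f_n,g_n) - J(g_n),$$
where $J$ is the functional of Lemma \ref{minforJ}. Since $E(f_n,g_n)\to I(s,t)$, it suffices to produce $\eta>0$ such that $J(g_n)\ge I(s,t)+\eta$ for all sufficiently large $n$. I will do this by comparing $J(g_n)$ with the minimum value $J(g_0)$, where $g_0$ is the soliton supplied by Lemma \ref{minforJ} satisfying $\|g_0\|^2=t$.

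First I normalize $g_n$ to meet the exact constraint $\|\cdot\|^2=t$. Set $\mu_n=\sqrt{t}/\|g_n\|$ and $\tilde g_n=\mu_n g_n$, so $\|\tilde g_n\|^2=t$ and $\mu_n\to 1$. By Lemma \ref{Ibounded} the sequence $(f_n,g_n)$ is bounded in $Y$, so $\|g_{nx}\|$ is bounded and, by Gagliardo--Nirenberg (as in \eqref{gLpbound}), so is $\int g_n^{p+2}\,dx$. Therefore
$$J(\tilde g_n)-J(g_n) = (\mu_n^2-1)\|g_{nx}\|^2 - \beta_2(\mu_n^{p+2}-1)\int_{-\infty}^\infty g_n^{p+2}\,dx \to 0.$$
Combined with Lemma \ref{minforJ}, which gives $J(\tilde g_n)\ge J(g_0)$, this yields $\liminf_{n\to\infty} J(g_n)\ge J(g_0)$.

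It remains to establish the strict inequality $I(s,t)<J(g_0)$, and this is exactly the test-function argument already used in the first case of Lemma \ref{bdfnxbelow}. Since $g_0$ is everywhere positive (see \eqref{g0}) with $\int g_0\,dx>0$, Lemma \ref{minfforg} yields $f\in H^1$ with $\|f\|^2=s$ and $\int(f_x^2-\alpha f^2 g_0)\,dx<0$. Consequently
$$I(s,t)\le E(f,g_0) = \int_{-\infty}^\infty\bigl(f_x^2-\alpha f^2 g_0-\beta_1|f|^{q+2}\bigr)dx + J(g_0) < J(g_0).$$
Taking $\delta=\tfrac{1}{2}(J(g_0)-I(s,t))>0$ and using both $E(f_n,g_n)\to I(s,t)$ and $\liminf J(g_n)\ge J(g_0)$ yields the stated bound for all sufficiently large $n$. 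No step presents a real obstacle given the earlier lemmas; the only care needed is the rescaling that converts the asymptotic constraint $\|g_n\|^2\to t$ into the exact constraint required to invoke Lemma \ref{minforJ}.
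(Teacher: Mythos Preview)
Your proof is correct and follows essentially the same approach as the paper: both hinge on the strict inequality $I(s,t)<J(g_0)$ obtained via Lemma~\ref{minfforg}, combined with the observation that the quantity in question equals $E(f_n,g_n)-J(g_n)$ and that $\liminf J(g_n)\ge J(g_0)$. The only differences are cosmetic---you argue directly and extract an explicit $\delta$, whereas the paper argues by contradiction---and you are in fact more careful than the paper in handling the rescaling from $\|g_n\|^2\to t$ to $\|g_n\|^2=t$ before invoking \eqref{g0ismin}.
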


\begin{proof} If the conclusion is false, then by passing to a
subsequence we may assume that there exists a minimizing sequence
$(f_n,g_n)$ for which
\begin{equation}
\liminf_{n \to \infty} \int_{-\infty}^\infty\left(|f_{nx}|^2
-\beta_1|f_n|^{q+2} -\alpha|f_n|^2g_n\right)\ dx \ge 0,
\end{equation}
and so
\begin{equation}
I(s,t) = \lim_{n\to\infty}E(f_n,g_n) \ge \liminf_{n \to \infty}
\int_{-\infty}^\infty\left(g_{nx}^2 - \beta_2 g_n^{p+2}\right)\
dx. \label{IgeJg0}
\end{equation}

Define $J$ and $g_0$ as in Lemma \ref{minforJ}.  Then
\eqref{IgeJg0} implies that
\begin{equation}
I(s,t) \ge J(g_0). \label{IgeJg02}
\end{equation} On the other hand, by Lemma
\ref{minfforg}, there exists $f \in H^1$ such that $\|f\|^2=s$ and
\begin{equation*}
\int_{-\infty}^{\infty}\left(f_x^2 -\alpha f^2 g_0\right)\ dx < 0.
\end{equation*}
Therefore
\begin{equation}
I(s,t) \le E(f,g_0) \le \int_{-\infty}^{\infty}\left(f_x^2 -\alpha
f^2 g_0\right)\ dx +J(g_0) < J(g_0),
\end{equation}
which contradicts \eqref{IgeJg02}.
\end{proof}

\begin{lem} \label{posmin}
For all $(f,g)\in Y$, one has $E(|f|,|g|)\leq E(f,g)$.
\end{lem}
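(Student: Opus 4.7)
The plan is to compare $E(|f|,|g|)$ and $E(f,g)$ term by term, using the definition
\begin{equation*}
E(u,v)=\int_{-\infty}^\infty\Bigl(|u_x|^2+v_x^2-\beta_1|u|^{q+2}-\beta_2 v^{p+2}-\alpha |u|^2v\Bigr)\,dx.
\end{equation*}
Since $|\,|f|\,|^{q+2}=|f|^{q+2}$, the $\beta_1$ term is unchanged, so the inequality will follow at once if I can verify (a) $\|\,|f|_x\|\le\|f_x\|$, (b) $\|\,|g|_x\|=\|g_x\|$, (c) $|g|^{p+2}\ge g^{p+2}$ pointwise, and (d) $|f|^2|g|\ge|f|^2g$ pointwise, then combine these with the signs dictated by the hypotheses $\beta_2>0$ and $\alpha\ge 0$.

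For (a), this is the standard Kato-type inequality for complex-valued $H^1$ functions. On the set where $|f|>0$ I would differentiate $|f|^2=f\overline{f}$ to obtain $2|f|\,|f|_x=2\operatorname{Re}(\overline{f}f_x)$, so that
\begin{equation*}
|\,|f|_x|=\frac{|\operatorname{Re}(\overline{f}f_x)|}{|f|}\le\frac{|\overline{f}\,f_x|}{|f|}=|f_x|
\end{equation*}
pointwise a.e. on $\{|f|>0\}$; and $|f|_x=0$ a.e. on $\{|f|=0\}$ by the usual Stampacchia-type result. Integrating gives (a). For (b), since $g$ is real-valued and $H^1$, one has $|g|_x=\operatorname{sgn}(g)\,g_x$ a.e. (with $|g|_x=0$ a.e. on $\{g=0\}$), so $(|g|_x)^2=g_x^2$ a.e. and (b) holds as an equality.

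For (c) and (d), I use the assumption that $p=a/b$ is rational with odd denominator. Writing $v^{p+2}=\operatorname{sgn}(v)^a|v|^{p+2}$, we have $|g|^{p+2}\ge g^{p+2}$ pointwise regardless of the parity of the numerator $a$ (equality if $a$ is even, strict inequality on $\{g<0\}$ if $a$ is odd). Similarly $|g|\ge g$ trivially, so $|f|^2|g|\ge|f|^2g$ pointwise.

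Combining, and using $\beta_2>0$ and $\alpha\ge 0$:
\begin{equation*}
E(|f|,|g|)-E(f,g)=\int_{-\infty}^\infty\Bigl((|f|_x)^2-|f_x|^2+\beta_2(g^{p+2}-|g|^{p+2})+\alpha|f|^2(g-|g|)\Bigr)dx\le 0,
\end{equation*}
where the first term is $\le 0$ by (a) and (b) combined (the $g$ contribution cancels), and the remaining two terms are $\le 0$ by (c) and (d). No step looks to be a serious obstacle; the only mild subtlety is ensuring (a) rigorously on the zero set of $f$, which is a routine appeal to the chain rule for Sobolev functions.
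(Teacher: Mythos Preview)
Your proposal is correct and follows essentially the same route as the paper's proof: both reduce the inequality to the standard fact that $\int||f|_x|^2\,dx\le\int|f_x|^2\,dx$ for $f\in H^1_{\mathbb C}$ (the paper cites Lieb--Loss for this), together with the non-negativity of $\beta_1,\beta_2,\alpha$. Your write-up is simply more explicit than the paper's, in that you spell out the pointwise comparisons for the $g^{p+2}$ and $|f|^2 g$ terms and handle the odd-denominator convention for $p$, which the paper leaves implicit.
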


\begin{proof}
It is a standard fact from analysis that if $f\in
H_{\mathbb{C}}^{1}$, then $|f(x)|$ is in $H^{1}$ and
\begin{equation}
\int_{-\infty }^{\infty }| |f|_{x}| ^{2}\ dx\leq \int_{-\infty
}^{\infty }|f_{x}|^{2}\ dx.
\label{kinendec}
\end{equation}
(For a proof, the reader may consult Theorem 6.17 of \cite{[LL]}.)
Since $\beta_1$, $\beta_2$, and $\alpha$ are non-negative numbers,
the Lemma follows immediately.
\end{proof}

The next two lemmas state that $E(f,g)$ decreases when $f$ and $g$ are
replaced by $|f|$ and $|g|$, and when $|f|$ and $|g|$ are symmetrically rearranged.  Recall that, for a non-negative
function $w:\mathbb{R} \to [0,\infty)$, if $\{x:w(x)>y\}$ has
finite measure $m(w,y)$ for all $y>0$, then the
symmetric decreasing rearrangement $w^\ast$ of $w$ is defined by
\begin{equation}
w^\ast(x) = \inf\ \{y\in(0,\infty): \frac12 m(w,y) \le x
\} \label{defarr}
\end{equation}
(or see page 80 of \cite{[LL]} for a different but equivalent definition).  For
$(f,g)$ in $Y$, both $|f|$ and $|g|$ are in $H^1$, and hence
$|f|^\ast$ and $|g|^\ast$ are well-defined.

\begin{lem}\label{symmmin}
For all $(f,g)\in Y$, one has $E(|f|^\ast,|g|^\ast)\leq E(f,g)$.
\end{lem}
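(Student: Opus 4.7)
The plan is to combine three standard properties of the symmetric decreasing rearrangement: the equimeasurability of $L^p$ norms, the Pólya–Szegő inequality for the Dirichlet integral, and the Hardy–Littlewood inequality for products. Since the full energy $E$ involves both positive (gradient) and negative (nonlinear and coupling) contributions, the task is to check that rearrangement decreases each positive term and increases each negative term.

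First I would invoke Lemma \ref{posmin} to reduce immediately to the case $f,g \ge 0$, since $|f|^{\ast}=|\,|f|\,|^{\ast}$ and $|g|^{\ast}=|\,|g|\,|^{\ast}$. With $f,g \ge 0$ in $H^1$, both $f^{\ast}$ and $g^{\ast}$ lie in $H^1$, and by the Pólya–Szegő inequality (cf.\ Lemma 7.17 of \cite{[LL]}),
\begin{equation*}
\int_{-\infty}^{\infty}\bigl|(f^{\ast})_x\bigr|^2\,dx \le \int_{-\infty}^{\infty}|f_x|^2\,dx,\qquad
\int_{-\infty}^{\infty}\bigl((g^{\ast})_x\bigr)^2\,dx \le \int_{-\infty}^{\infty}g_x^2\,dx.
\end{equation*}
For the pure-power nonlinear terms I would use equimeasurability: since $\varphi \mapsto \varphi^{q+2}$ and $\varphi\mapsto \varphi^{p+2}$ are non-decreasing on $[0,\infty)$,
\begin{equation*}
\int_{-\infty}^{\infty}(f^{\ast})^{q+2}\,dx=\int_{-\infty}^{\infty}f^{q+2}\,dx,\qquad
\int_{-\infty}^{\infty}(g^{\ast})^{p+2}\,dx=\int_{-\infty}^{\infty}g^{p+2}\,dx.
\end{equation*}
So the $-\beta_1|f|^{q+2}$ and $-\beta_2 g^{p+2}$ contributions to $E$ are unchanged.

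The remaining ingredient is the coupling term, which I would handle by the Hardy–Littlewood rearrangement inequality (Theorem 3.4 of \cite{[LL]}): for non-negative $F,G\in L^1$,
\begin{equation*}
\int_{-\infty}^{\infty} F(x)G(x)\,dx \le \int_{-\infty}^{\infty} F^{\ast}(x)G^{\ast}(x)\,dx.
\end{equation*}
Applying this with $F=f^2$ and $G=g$, and using $(f^2)^{\ast}=(f^{\ast})^2$ (which follows from the definition \eqref{defarr} because squaring is a monotone bijection on $[0,\infty)$), I get
\begin{equation*}
\int_{-\infty}^{\infty} f^2 g\,dx \le \int_{-\infty}^{\infty}(f^{\ast})^2 g^{\ast}\,dx,
\end{equation*}
so that $-\alpha\int (f^{\ast})^2 g^{\ast}\,dx \le -\alpha\int f^2 g\,dx$ because $\alpha\ge 0$.

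Summing the four inequalities (two for the kinetic terms, two equalities for the pure nonlinear terms, one for the coupling term) and using $\beta_1\ge 0$, $\beta_2\ge 0$, $\alpha\ge 0$ throughout, I obtain $E(f^{\ast},g^{\ast})\le E(f,g)$ for non-negative $f,g$, and then the general case follows from Lemma \ref{posmin}. There is no real obstacle here: the only point requiring a moment's care is that the sign assumptions $\beta_1,\beta_2,\alpha\ge 0$ (built into the running hypotheses of the section) are exactly what is needed so that each rearrangement inequality pushes $E$ in the correct direction.
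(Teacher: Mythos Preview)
Your proof is correct and follows essentially the same route as the paper: Pólya--Szegő for the gradient terms, equimeasurability for the pure-power terms, Hardy--Littlewood for the coupling term, and Lemma~\ref{posmin} to pass between $(f,g)$ and $(|f|,|g|)$. The only cosmetic difference is that you invoke Lemma~\ref{posmin} at the start to reduce to non-negative $f,g$, whereas the paper carries $|f|$ and $|g|$ through the rearrangement estimates and applies Lemma~\ref{posmin} at the end; you are also slightly more explicit about the identity $(f^2)^\ast=(f^\ast)^2$ needed to deduce the coupling inequality from Theorem~3.4 of \cite{[LL]}.
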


\begin{proof}
This follows from classic estimates on the symmetric
rearrangements of functions. A basic fact about rearrangements is that they
preserve $L^p$ norms (cf. page 81 of \cite{[LL]}), so that
\begin{equation}
\int_{-\infty }^{\infty }(|f|^\ast) ^{q+2}\ dx = \int_{-\infty
}^{\infty }|f|^{q+2}\ dx
\label{rearrlq}
\end{equation}
and
\begin{equation}
\int_{-\infty }^{\infty }(|g|^\ast) ^{p+2}\ dx = \int_{-\infty
}^{\infty }|g|^{p+2}\ dx.
\label{rearrlp}
\end{equation}
Another basic inequality about rearrangements, Theorem 3.4 of \cite{[LL]},
implies that
\begin{equation}
\int_{-\infty }^{\infty }(|f|^\ast) ^{2}|g|^\ast \ dx\geq
\int_{-\infty }^{\infty }|f|^{2}|g|\ dx.
\label{rearrmix}
\end{equation}
Finally, from Lemma 7.17 of \cite{[LL]} we
have that
\begin{equation*}
\int_{-\infty }^{\infty }| (|f|^\ast)_{x}| ^{2}\ dx\leq
\int_{-\infty }^{\infty }||f|_{x}|^{2}\ dx,
\end{equation*}
and similarly for $g(x)$.
In light of these facts, and because $\alpha$, $\beta_1$, and $\beta_2$ are all non-negative,
it follows from Lemma \ref{posmin} that $E(|f|^\ast,|g|^\ast)\leq E(f,g)$.

\end{proof}

We will also make crucial use of the following Lemma, due to Garrisi \cite{[G]} (see also
the $N$-dimensional version given in Byeon \cite{[By]}).  We
include a proof here since our version of the lemma differs
slightly from that stated by Garrisi.

\begin{lem}  Suppose $u$ and $v$ are non-negative, even, $C^\infty$ functions with compact support in $\mathbb R$, which are non-increasing on $\{x: x \ge
0\}$.  Let $x_1$ and $x_2$ be numbers such that $u(x+x_1)$ and
$v(x+x_2)$ have disjoint supports, and define
$$
w(x)=u(x+x_1)+v(x+x_2).
$$
Let $w^\ast:\mathbb R \to \mathbb R$ be the symmetric decreasing rearrangement
of $w$.  Then the distributional derivative $(w^\ast)'$ of
$w^\ast$ is in $L^2$, and satisfies
\begin{equation}
\|(w^\ast)'\|^2 \le \|w'\|^2 - \frac34 \min\{\|u'\|^2,\|v'\|^2\}.
\label{garineq}
\end{equation}
\label{garlem}
\end{lem}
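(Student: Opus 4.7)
The plan is to use the one-dimensional coarea formula to express both $\|w'\|^2$ and $\|(w^*)'\|^2$ as integrals over the range of $w$, and then compare them pointwise at each height. Since $u$ and $v$ are compactly supported, even, and nonincreasing on $[0,\infty)$, after translating by $x_1$ and $x_2$ the function $w=u(\cdot+x_1)+v(\cdot+x_2)$ consists of two disjoint bumps, and $|w'|$ equals $|u'|$ (respectively $|v'|$) on the support of the first (respectively second) bump. By Sard's theorem, for almost every $t>0$ the level set $\{w=t\}$ is finite; by the symmetry and monotonicity of each bump, it consists of $4$ points when $t\in(0,T)$, where $T=\min(|u|_\infty,|v|_\infty)$, and of $2$ points when $t\in(T,\max(|u|_\infty,|v|_\infty))$.

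Applying the coarea formula in one dimension to $w$ and $w^*$ yields
\[
\|w'\|^2=\int_0^\infty\sum_{y:w(y)=t}|w'(y)|\,dt\quad\text{and}\quad\|(w^*)'\|^2=\int_0^\infty\sum_{y:w^*(y)=t}|(w^*)'(y)|\,dt.
\]
Using the facts that $w^*$ has the same distribution function $\mu(t)=|\{w>t\}|$ as $w$, that $|(w^*)'(r(t))|=2/|\mu'(t)|$ at the positive preimage $r(t)=\mu(t)/2$, and that $|\mu'(t)|=\sum_{y:w(y)=t}1/|w'(y)|$, I obtain the identity
\[
\sum_{y:w^*(y)=t}|(w^*)'(y)|=\frac{4}{\sum_{y:w(y)=t}1/|w'(y)|}.
\]
For $t\in(0,T)$, write $a=a(t)$ and $b=b(t)$ for the common value of $|u'|$ and $|v'|$ at the two symmetric preimages in each bump's level set. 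Then $\sum|w'|=2a+2b$ and $\sum 1/|w'|=2/a+2/b$, so the integrand of $\|w'\|^2-\|(w^*)'\|^2$ at height $t$ equals $(2a+2b)-\frac{2ab}{a+b}=\frac{2(a^2+ab+b^2)}{a+b}$. For $t\in(T,\max(|u|_\infty,|v|_\infty))$, only one bump contributes and the corresponding integrands for $\|w'\|^2$ and $\|(w^*)'\|^2$ coincide.

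The heart of the argument is then the elementary inequality $4(a^2+ab+b^2)\ge 3(a+b)^2$, equivalent to $(a-b)^2\ge 0$, which gives the pointwise bound $\frac{2(a^2+ab+b^2)}{a+b}\ge\frac{3}{4}(2a+2b)$ for all $a,b>0$. Integrating over $(0,T)$ produces
\[
\|w'\|^2-\|(w^*)'\|^2\ge\frac{3}{4}\int_0^T\bigl(2a(t)+2b(t)\bigr)\,dt.
\]
Assuming without loss of generality that $|u|_\infty\le|v|_\infty$, we have $T=|u|_\infty$, so the coarea formula applied to $u$ yields $\int_0^T 2a(t)\,dt=\|u'\|^2$, hence $\|w'\|^2-\|(w^*)'\|^2\ge\frac{3}{4}\|u'\|^2\ge\frac{3}{4}\min(\|u'\|^2,\|v'\|^2)$; the opposite case is symmetric. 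I expect the main obstacle to be not the algebra, which is sharp thanks to the constraint $(a-b)^2\ge 0$, but the bookkeeping needed to justify the coarea manipulations rigorously: handling non-regular values via Sard's theorem, verifying the passage from $\mu'$ to $(w^*)'$ at regular points, and securing the $H^1$-regularity of $w^*$ (equivalently $(w^*)'\in L^2$), all of which are standard in one-dimensional Pólya–Szegő theory but need to be carried out explicitly in this setting.
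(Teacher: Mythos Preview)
Your approach is essentially the same as the paper's: both proofs express $\|w'\|^2$ and $\|(w^*)'\|^2$ as integrals over the range of $w$ and compare the integrands at each height using the arithmetic--harmonic mean inequality (your bound $\frac{2(a^2+ab+b^2)}{a+b}\ge\frac34(2a+2b)$ rearranges to $\frac{2ab}{a+b}\le\frac{a+b}{2}$, which is exactly the inequality $\frac{2}{\mu+\nu}\le\frac12(\frac1\mu+\frac1\nu)$ used in the paper with $\mu=1/a$, $\nu=1/b$).

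The one noteworthy technical difference is in how the non-strictly-monotone case is handled. The paper first treats the case $u'<0$ on $(0,c)$, $v'<0$ on $(0,d)$ by working directly with the inverse functions $z_u$, $z_v$ (so no coarea formula or Sard's theorem is needed), and then recovers the general case by perturbing $u$ and $v$ to be strictly decreasing and invoking Coron's theorem on the $H^1$-continuity of rearrangement to pass to the limit. Your route via Sard's theorem avoids the approximation step and Coron's result, at the cost of the bookkeeping you flag (regularity of $w^*$, the formula $|\mu'(t)|=\sum 1/|w'(y)|$ at regular values, etc.). Both packagings are standard and lead to the same estimate.
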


\begin{proof}  First consider the case when $u'(x) < 0$
for all $x \in (0,c)$ and $v'(x) < 0$ for all $x \in (0,d)$, where
$[-c,c]$ is the support of $u$ and $[-d,d]$ is the support of $v$.
Let $a=\sup\{u(x): x \in \mathbb R\}$ and $b=\sup\{v(x): x \in \mathbb R\}$.  By
interchanging $u$ and $v$ if necessary, we may assume that $a \le
b$.

Define $z_u:[0,\infty) \to [0,c]$ by
\begin{equation}
z_u(y)=\inf\{x \in [0,\infty): u(x) \le y\}.
\label{defyu}
\end{equation}
For $y \in (0,a)$, $z_u(y)$ is equal to the unique
number $x(y) \in (0,c)$ such that $u(x(y))=y$.  The
function $z_u$ is differentiable on $(0,a)$, with derivative
$$
z_u'(y)=\frac{1}{u'(x(y))} < 0,
$$
and we have
\begin{equation*}
\begin{aligned}
\|u'\|^2&=2\int_0^c (u'(x))^2\ dx \\
 &= 2\int_0^a\frac{-1}{z_u'(y)}\ dy\\
 &= 2\int_0^a\frac{1}{|z_u'(y)|}\ dy. \end{aligned}
 \end{equation*}
For $y\ge a$ we have $z_u(y)=0$.

Similarly, we define $z_v:[0,\infty) \to [0,d]$ by
\begin{equation}
z_v(y)=\inf\{x \in [0,\infty): v(x) \le y\}.
\label{defyv}
\end{equation}
Then
$$
y_v'(v(x))=\frac{1}{v'(x)} < 0
$$
on $(0,d)$, and
\begin{equation*}
\|v'\|^2= 2\int_0^b\frac{1}{|z_v'(y)|}\ dy.
 \end{equation*}

Now, for each $y \in [0,\infty)$, define
\begin{equation}
z(y) =  z_u(y)+z_v(y).
\end{equation}
Then $z$ is continuous on $[0,\infty)$ and differentiable, with
strictly negative derivative, on $(0,a)$ and on $(a,b)$. Therefore
$z$ is strictly decreasing on $[0,b]$, and so its restriction to
$[0,b]$ has an inverse function $z^{-1}:[0,c+d]\to [0,b]$, with
$z^{-1}([0,c])=[a,b]$ and $z^{-1}([c,c+d])=([0,a])$. From
\eqref{defarr}  and the definition of $w$, using the fact that
$u(x+x_1)$ and $v(x+x+2)$ have disjoint supports, we see that $w^\ast$ is supported on $[0,c+d]$ and
coincides with $z^{-1}$ there. In particular, for all $y \in
(0,a) \cup (a,b)$, we have
\begin{equation*}
(w^\ast)'(z(y))=\frac{1}{z_u'(y)+z_v'(y)}.
\end{equation*}

Now making use of the fact that for all positive numbers $\mu$ and
$\nu$, there holds the elementary inequality
$$
\frac{2}{\mu + \nu} \le
\frac12\left(\frac{1}{\mu}+\frac{1}{\nu}\right),
$$
we have the following computation:
\begin{equation*}
\begin{aligned}
\|(w^\ast)'\|^2&=2\int_0^{c+d} ((w^\ast)'(x))^2\ dx \\
 &=2 \int_0^c((w^\ast)'(x))^2\ dx +2\int_c^{c+d}((w^\ast)'(x))^2\ dx \\
 &=2 \int_0^a{\frac{1}{|z_u'(y)|+|z_v'(y)|}\ dy}+2\int_a^b{\frac{1}{|z_v'(y)|}\ dy }\\
 &\le \frac12 \int_0^a{\left(\frac{1}{|z_u'(y)|}+\frac{1}{|z_v'(y)|}\right)\ dy}+2\int_a^b{\frac{1}{|z_v'(y)|}\ dy }\\
 &< \frac12 \int_0^a{\frac{1}{|z_u'(y)|}\ dy}+ 2\int_0^a{\frac{1}{|z_v'(y)|}\ dy}+2\int_a^b{\frac{1}{|z_v'(y)|}\ dy }\\
 &=\frac12 \int_0^a{\frac{1}{|z_u'(y)|}\ dy} +2\int_0^b{\frac{1}{|z_v'(y)|}\ dy}\\
&= \frac12\int_0^c{(u'(x))^2\ dx} +2\int_0^d{(v'(x))^2\ dx}\\
&=2\int_0^c{(u'(x))^2\ dx} +2\int_0^d{(v'(x))^2\ dx} -\frac32\int_0^c{(u'(x))^2\ dx}\\
&=\frac12\|u'\|^2+\frac12\|v'\|^2-\frac34\|u'\|^2\\
&=\frac12\|w'\|^2-\frac34\|u'\|^2\\
&\le\frac12\|w'\|^2-\frac34\min\{\|u'\|^2,\|v'\|^2\}.
\end{aligned}
 \end{equation*}
Thus \eqref{garineq} is proved in the special case when $u' <0$ on
$(0,c)$ and $v'<0$ on $(0,d)$.

Now we consider the general case, which we can reduce to the case
treated above as follows.

Let $\phi_1(x)$ be a smooth, even function such that $\phi_1(x)>0$
for $x \in (0,c)$ and $\phi_1(x)=0$ for $x \ge c$, and such that
$\phi_1(x)$ is strictly decreasing on $(0,c)$. Let $\phi_2(x)$ be
a similar function with support on $(0,d)$.  For each $\epsilon >
0$, define $u_\epsilon(x) = u(x)+\epsilon \phi_1(x)$ and
$v_\epsilon(x)=v(x) +\epsilon \phi_2(x)$, and let
$w_\epsilon(x)=u_\epsilon(x)+v_\epsilon(x-T)$.  Since $u' \le 0$
and $\phi_1' <0$ on $(0,c)$, then $u_\epsilon'=u'+\epsilon \phi_1'
<0$ on $(0,c)$, so $u_\epsilon$ is strictly decreasing on $(0,c)$.
Similarly, $v_\epsilon$ is strictly decreasing on $(0,d)$.  So, by
what has been proved above,
\begin{equation}
\|(w_\epsilon^\ast)'\|^2 \le \|w_\epsilon'\|^2 - \frac34
\min\{\|u_\epsilon'\|^2,\|v_\epsilon'\|^2\}. \label{approxineq}
\end{equation}

Now take limits on both sides of \eqref{approxineq} as $\epsilon$
goes to zero.  By the dominated convergence theorem, the right
hand side approaches $$\|w'\|^2 -
\frac34\min\{\|u'\|^2,\|u'\|^2\}.$$ Also, since $w_\epsilon$
converges in $H^1$ norm on $\mathbb R$ to $w$, then by a theorem of Coron
\cite{[Co]}, $w^\ast_\epsilon$ converges in $H^1$ norm to
$w^\ast$.  Therefore the left-hand side of \eqref{garineq}
converges to $\|(w^\ast)'\|^2$, and \eqref{garineq} is proved.
\end{proof}

\begin{lem}\label{Econt}
The functionals $E$, $G$, and $H$ are continuous from $Y$ to
$\mathbb{R}$.
\end{lem}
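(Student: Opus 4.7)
The plan is to verify continuity of each term appearing in the definitions of $E$, $G$, and $H$ separately, exploiting the continuous Sobolev embeddings $H^1 \hookrightarrow L^r$ for every $r \in [2,\infty]$ together with continuity of the $L^r$ norm.

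First, continuity of $H(u) = \|u\|^2$ is immediate, since $|H(u_n) - H(u)| \le (\|u_n\| + \|u\|)\|u_n - u\|$, and the $L^2$ norm is dominated by the $H^1_{\mathbb{C}}$ norm. For $G(u,v)$, I would split it into $\|v\|^2$, treated as for $H$, and $\operatorname{Im}\int u \overline{u}_x\, dx$. For the latter, the identity
\begin{equation*}
u_n \overline{u}_{nx} - u \overline{u}_x = (u_n - u)\overline{u}_{nx} + u(\overline{u}_{nx} - \overline{u}_x)
\end{equation*}
together with Cauchy--Schwarz gives
\begin{equation*}
\left| \int u_n \overline{u}_{nx}\, dx - \int u \overline{u}_x\, dx \right| \le \|u_n - u\|\cdot \|u_{nx}\| + \|u\|\cdot \|u_{nx} - u_x\|,
\end{equation*}
which tends to $0$ once $u_n \to u$ in $H^1_{\mathbb{C}}$ (since $\|u_{nx}\|$ is then bounded).

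For $E(u,v)$, the two Dirichlet terms $\int |u_x|^2\, dx$ and $\int v_x^2\, dx$ are treated exactly as $H$ was. For the nonlinear terms, I would invoke the standard fact that if $u_n \to u$ in $H^1_{\mathbb{C}}$, then $u_n \to u$ in $L^r$ for every $r \in [2,\infty]$ (in particular for $r=q+2$ and $r=4$), which follows from the Gagliardo--Nirenberg inequalities \eqref{GLforf} and \eqref{gLpbound}, or equivalently from $H^1 \hookrightarrow L^\infty$ and interpolation. The pointwise inequality $\bigl| |z_1|^{q+2} - |z_2|^{q+2}\bigr| \le C(|z_1|^{q+1} + |z_2|^{q+1})|z_1 - z_2|$ and Hölder's inequality then yield
\begin{equation*}
\left| \int |u_n|^{q+2}\, dx - \int |u|^{q+2}\, dx \right| \le C\bigl( |u_n|_{q+2}^{q+1} + |u|_{q+2}^{q+1} \bigr) |u_n - u|_{q+2},
\end{equation*}
which tends to $0$; the analogous estimate handles $\int v^{p+2}\, dx$.

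For the coupling term, I would write
\begin{equation*}
\int |u_n|^2 v_n\, dx - \int |u|^2 v\, dx = \int \bigl( |u_n|^2 - |u|^2 \bigr) v_n\, dx + \int |u|^2 (v_n - v)\, dx
\end{equation*}
and apply Hölder with exponents $(2,2)$ and the identity $|u_n|^2 - |u|^2 = (|u_n| - |u|)(|u_n| + |u|)$, bounding via $|u_n|_4$, $|u|_4$, $|v_n|_2$, and $|v|_2$. All of these are controlled by the $Y$-norm, so the difference tends to $0$. I expect no real obstacle: the only mildly subtle point is being careful with the complex-valued $u$ in $\int u \overline{u}_x\, dx$ and in the differentiation under the absolute value, but both are routine.
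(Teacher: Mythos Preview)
Your proof is correct and follows the same idea as the paper, which simply remarks that continuity follows from the Sobolev embedding of $H^1$ into $L^\infty$; you have spelled out the details that the paper leaves implicit.
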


\begin{proof}
This follows easily (for all $p \ge 0$ and $q \ge 0$) from the
Sobolev embedding theorem, in particular using the fact that the
inclusion of $H^1$ in $L^\infty$ is continuous.
\end{proof}

\begin{lem} \label{subadd}
Let $s_1,s_2,t_1,t_2\geq 0$ be given, and suppose that
$s_1+s_2>0$, $t_1+t_2>0$, $s_1+t_1>0$, and $s_2+t_2>0$. Then
\begin{equation}
\label{SUBA}
I(s_{1}+s_{2},t_{1}+t_{2})<I(s_{1},t_{1})+I(s_{2},t_{2}).
\end{equation}
\end{lem}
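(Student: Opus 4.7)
The plan is to construct, for any given $\epsilon > 0$, a trial pair $(F,G) \in Y$ satisfying $\|F\|^2 = s_1+s_2$ and $\|G\|^2 = t_1+t_2$, with
\begin{equation*}
E(F,G) \le I(s_1,t_1) + I(s_2,t_2) + 2\epsilon - \delta_0,
\end{equation*}
where $\delta_0 > 0$ depends only on $s_1,s_2,t_1,t_2$ (and not on $\epsilon$). Choosing $\epsilon < \delta_0/2$ then yields \eqref{SUBA}. The crucial ingredient is Lemma~\ref{garlem}.

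First I would pick near-minimizers $(f_i,g_i)$ for $I(s_i,t_i)$ with $\|f_i\|^2 = s_i$, $\|g_i\|^2 = t_i$ and $E(f_i,g_i) < I(s_i,t_i) + \epsilon$. Using Lemmas~\ref{posmin} and~\ref{symmmin}, I may replace them by the pairs $(|f_i|^*, |g_i|^*)$ without increasing $E$, and then by mollifying and truncating each coordinate assume that $f_i, g_i$ are non-negative, even, $C^\infty$ functions with compact support that are non-increasing on $[0,\infty)$; continuity of $E$ (Lemma~\ref{Econt}) together with a small renormalization keeps the constraints exact and changes $E$ by an arbitrarily small amount.

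In the main case $t_1, t_2 > 0$, I would translate the pairs by $x_1, x_2$ with $|x_1 - x_2|$ so large that the supports of $(f_1(\cdot-x_1), g_1(\cdot-x_1))$ and $(f_2(\cdot-x_2), g_2(\cdot-x_2))$ are disjoint, and set $F = f_1(\cdot-x_1)+f_2(\cdot-x_2)$, $G = g_1(\cdot-x_1)+g_2(\cdot-x_2)$. Disjointness gives $\|F\|^2 = s_1+s_2$, $\|G\|^2 = t_1+t_2$ and $E(F,G) = E(f_1,g_1) + E(f_2,g_2)$. Replacing $(F,G)$ by the symmetric decreasing rearrangements $(F^*, G^*)$ preserves the $L^2$-norms and the $L^{q+2}, L^{p+2}$ integrals; the Hardy--Littlewood inequality gives $\int (F^*)^2 G^* \ge \int F^2 G$; and since both $g_1, g_2$ are non-trivial, Lemma~\ref{garlem} applied to $G$ yields
\begin{equation*}
\|(G^*)'\|^2 \le \|G'\|^2 - \frac{3}{4}\min\{\|g_1'\|^2, \|g_2'\|^2\}.
\end{equation*}
The proof of Lemma~\ref{bdgnxbelow} applies to any near-minimizing pair and so provides a uniform $\delta > 0$ with $\|g_i'\| \ge \delta$ for $\epsilon$ small enough. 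Combining these facts, $I(s_1+s_2, t_1+t_2) \le E(F^*, G^*) \le E(F,G) - \frac{3}{4}\delta^2$, which delivers the main estimate.

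When one of $t_1, t_2$ vanishes (say $t_1 = 0$, so $s_1 > 0$ and $t_2 > 0$), the above approach breaks because $g_1 \equiv 0$ and Lemma~\ref{garlem} no longer applies to $G$. If $\beta_1 > 0$, I would instead take a localized near-minimizer $f_1$ of $I(s_1, 0)$ (of sech-type, from Lemma~\ref{Is0}) and place it to overlap $g_2$, exploiting the strictly negative cross term $-\alpha \int f_1^2 g_2$ to beat $I(s_1,0) + I(0,t_2)$. If $\beta_1 = 0$, then $I(s_1, 0) = 0$ by Lemma~\ref{bdfnxbelow}, and the needed estimate $I(s_1+s_2, t_2) < I(s_2, t_2)$ follows from a dilation $f_2 \mapsto \sqrt{(s_1+s_2)/s_2}\, f_2$ of a near-minimizer $(f_2, g_2)$ of $I(s_2, t_2)$, using that $\|f_2'\|^2 - \alpha\int f_2^2 g_2 < 0$, a consequence of Lemma~\ref{minfforg} applied at the minimizer of $J$. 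The main obstacle is maintaining a uniform $\epsilon$-independent lower bound on $\|g_i'\|$ through the symmetrization and smoothing operations so that Lemma~\ref{garlem} produces a genuine gain; a secondary difficulty is the case analysis required when some $s_i$ or $t_i$ vanishes, where the Garrisi--Byeon mechanism must be replaced by a cross-term or scaling argument.
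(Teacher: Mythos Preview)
Your overall strategy is the paper's: regularize near-minimizers into non-negative, even, symmetric-decreasing, compactly supported $C^\infty$ bumps, translate to disjoint supports, rearrange the sum, and invoke Lemma~\ref{garlem} together with the derivative lower bounds of Lemmas~\ref{bdgnxbelow}--\ref{bdfnxbelow} to extract a fixed kinetic-energy drop. The generic case $t_1,t_2>0$ is handled correctly and exactly as in the paper.

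The degenerate case analysis, however, is incomplete. When $t_1=0$ you split only on the sign of $\beta_1$, but one must also split on whether $s_2>0$ or $s_2=0$. If $\beta_1>0$ and $s_2>0$, your ``place $f_1$ to overlap $g_2$'' argument targets $I(s_1,0)+I(0,t_2)$, not the correct $I(s_1,0)+I(s_2,t_2)$; moreover the near-minimizer for $I(s_2,t_2)$ already carries a nontrivial $f_2$ centered where $g_2$ is, so superposing $f_1$ there destroys $\|F\|^2=s_1+s_2$. The paper instead applies Lemma~\ref{garlem} to $F$ rather than $G$: since $s_1,s_2>0$, Lemma~\ref{bdfnxbelow} gives uniform lower bounds on $\|f_i'\|$, and the same rearrangement mechanism yields the strict drop. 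Conversely, if $\beta_1=0$ and $s_2=0$, your dilation $f_2\mapsto\sqrt{(s_1+s_2)/s_2}\,f_2$ is unavailable; the paper builds a test $f$ against the explicit KdV minimizer $g_0$ via Lemma~\ref{minfforg}. Finally, in your $\beta_1=0$, $s_2>0$ subcase, the inequality $\|f_2'\|^2-\alpha\int f_2^2 g_2<0$ for a \emph{near-minimizer} $(f_2,g_2)$ does not follow from Lemma~\ref{minfforg} (which only produces \emph{some} $f$ with this property); the correct reference is Lemma~\ref{bdmixbelow}, which gives precisely this uniform bound along minimizing sequences.
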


\begin{proof}  We claim first that, for $i=1,2$, we can choose minimizing sequences
$(f_n^{(i)},g_n^{(i)})$ for $I(s_i,t_i)$ such that for all $n \in
\mathbb{N}$, $f_n^{(i)}$ and $g_n^{(i)}$
\begin{itemize}
\item{(i)}  are real-valued and non-negative on $\mathbb{R}$;
\item{(ii)} belong to $H^1$ and have compact support;
\item{(iii)} are even functions;
\item{(iv)} are non-increasing functions of $x$ for $x \ge 0$;
\item{(v)} are $C^\infty$ functions; and \item{(vi)} satisfy
$\|f_n^{(i)}\|=s_i$ and $\|g_n^{(i)}\|=t_i$.
\end{itemize}
To prove this, we can take $i=1$, since the proof for $i=2$ is identical.
Also we may assume that $s_1> 0$ and $t_1> 0$, since
otherwise we can simply take $f_n^{(1)}$ or $g_n^{(1)}$ to be
identically zero on $\mathbb{R}$.

Start with an arbitrary
minimizing sequence $(w_n^{(1)},z_n^{(1)})$ for $I(s_1,t_1)$.  Since
 functions with compact support are dense in $H^1$, and
$E: Y \to \mathbb{R}$ is continuous, we can approximate
$(w_n^{(1)},z_n^{(1)})$ by functions $(w_n^{(2)},z_n^{(2)})$ which
have compact support and which still form a minimizing sequence
for $I(s_1,t_1)$.  Then from Lemma \ref{symmmin} it follows that
the sequence defined by
\begin{equation*}
(w_n^{(3)},z_n^{(3)})= (|w_n^{(2)}|^\ast,|z_n^{(2)}|^\ast)
\end{equation*}
is still a minimizing sequence for $I(s_1,t_1)$, and for each $n$,
$w_n^{(3)}$ and $z_n^{(3)}$have the properties (i) through (iv)
listed above.

Next, observe that if $f$ and $\psi$ are any two functions with
properties (i) through (iv), then their convolution $f \star
\psi$, defined as in \eqref{defconvol}, also satisfies properties
(i) through (iv).  Moreover, as is well known, if we define
$\psi_\epsilon = (1/\epsilon)\psi(x/\epsilon)$ for $\epsilon > 0$,
and choose $\psi$ such that $\int_{-\infty}^\infty \psi(x)\ dx =
1$, then convolution with $\psi_\epsilon$ is an ``approximation to
the identity'': that is, the functions $f \star \psi_\epsilon$
converge strongly to $f$ in $H^1$ as $\epsilon \to 0$.  Finally,
if $\psi$ is $C^\infty$ then $f \star \psi_\epsilon$ will be
$C^\infty$ also. Therefore by choosing $\psi(x)$ to be any
non-negative, $C^\infty$, even function with compact support,
which is decreasing for $x \ge 0$, and satisfies
$\int_{-\infty}^\infty \psi(x)\ dx = 1$, and defining
\begin{equation*}
(w_n^{(4)},z_n^{(4)})=
(w_n^{(3)}\star\psi_{\epsilon_n},z_n^{(3)}\star\psi_{\epsilon_n}),
\end{equation*}
with $\epsilon_n$ chosen appropriately small for $n$ large, we obtain a
minimizing sequence $(w_n^{(4)},z_n^{(4)})$ for $I(s_1,t_1)$ that satisfies not only the
properties (i) through (iv) above, but also property (v).

 Finally, we
obtain the desired minimizing sequence satisfying properties (i)
through (vi) by setting
\begin{equation*}
f_n^{(1)}=\frac{(s_{i})^{1/2}w_{n}^{(4)}}{\|w_{n}^{(4)}\|}\text{ \
and \ }g_n^{(1)}=\frac{(t_{i})^{1/2}z_{n}^{(4)}}{\|g_{n}^{(i)}\|},
\end{equation*}
respectively, which is possible since for $n$ sufficiently large
we have $\|w_{n}^{(4)}\|>0$ and $\|z_{n}^{(4)}\| >0$.

Next, choose for each $n$ a number $x_n$
such that $f_n^{(1)}(x)$ and $\tilde f_n^{(2)}(x)= f_n^{(2)}(x+x_n)$ have disjoint support, and
$g_n^{(1)}(x)$ and $\tilde g_n^{(2)}(x)=g_n^{(2)}(x+x_n)$ have disjoint support.  Define
\begin{equation*}
\begin{aligned}
f_n &= \left(f_n^{(1)} + \tilde f_n^{(2)}\right)^\ast,\\
g_n &= \left(g_n^{(1)} + \tilde g_n^{(2)}\right)^\ast.
\end{aligned}
\end{equation*}
Then $\|f_n\|^2 = s_1 + s_2$ and $\|g_n\|^2 = t_1 + t_2$, so
\begin{equation}
I(s_1+s_2,t_1 + t_2) \le E(f_n,g_n).
\label{IleE}
\end{equation}
On the other hand, from Lemma \ref{garlem} we have that
\begin{equation}
\begin{aligned}
\int_{-\infty}^\infty &{\left( f_{nx}^2 + g_{nx}^2 \right)\ dx}
\le
\int_{-\infty}^\infty {\left( (f_n^{(1)}+\tilde f_n^{(2)})_x^2 + (g_n^{(1)}+\tilde g_n^{(2)})_x^2 \right)\ dx} - K_n\\
 &=\int_{-\infty}^\infty {\left( (f_{nx}^{(1)})^2+(\tilde f_{nx}^{(2)})^2 + (g_{nx}^{(1)})^2+(\tilde g_{nx}^{(2)})^2 \right)\ dx} - K_n,
\end{aligned}
\label{kinenstrictdec}
\end{equation}
where
\begin{equation}\label{defKn}
K_n = \frac34
\left(\min\left\{\|f_{nx}^{(1)}\|^2,\|f_{nx}^{(2)}\|^2\right\}
+\min\left\{\|g_{nx}^{(1)}\|^2,\|g_{nx}^{(2)}\|^2\right\}\right).
\end{equation}

Furthermore, from the properties \eqref{rearrlp}, \eqref{rearrlq}, and \eqref{rearrmix} of rearrangements, we have that
\begin{equation}
\begin{aligned}
\int_{-\infty}^\infty |f_n|^{q+2}\ dx &= \int_{-\infty}^\infty |f_n^{(1)}|^{q+2}\ dx + \int_{-\infty}^\infty |f_n^{(2)}|^{q+2}\ dx\\
\int_{-\infty}^\infty g_n^{p+2}\ dx &= \int_{-\infty}^\infty (g_n^{(1)})^{p+2}\ dx + \int_{-\infty}^\infty (g_n^{(2)})^{q+2}\ dx\\
\int_{-\infty}^\infty |f_n|^2g_n\ dx &\ge \int_{-\infty}^\infty |f_n^{(1)}|^2g_n^{(1)}\ dx + \int_{-\infty}^\infty |f_n^{(2)}|^2g_n^{(2)}\ dx,\\
\end{aligned}
\end{equation}
and therefore, combining with \eqref{IleE} and \eqref{kinenstrictdec},
we have that for every $n$,
\begin{equation}
I(s_1+t_1, s_2+t_2) \le E(f_n,g_n) \le E(f_n^{(1)},g_n^{(1)}) + E(f_n^{(2)},g_n^{(2)}) - K_n.
\end{equation}
It follows by taking the limit superior on the right-hand side that
\begin{equation}
I(s_1+t_1,s_2+t_2) \le I(s_1,t_1)+I(s_2,t_2) - \liminf_{n \to \infty} K_n.
\label{subaddKn}
\end{equation}

Since $t_1 + t_2 >0$, then either $t_1$ and $t_2$ are both
positive,  or one of $t_1$ and $t_2$ is zero and the other is
positive. In the latter case, we may assume that $t_1 = 0$ and
$t_2 >0$, since otherwise we can simply switch $t_1$ and $t_2$.
Then we will argue separately according as to whether $s_2$ is
positive or zero.  To prove the theorem, then, it suffices to
consider the following three cases: (i) $t_1
> 0$ and $t_2>0$; (ii) $t_1 = 0$, $t_2 > 0$, and $s_2 > 0$; and (iii)
$t_1 = 0$, $t_2 >0$, and $s_2=0$.

In case (i), when $t_1 > 0$ and $t_2 >0$, it follows from Lemma
\ref{bdgnxbelow} that there exist numbers $\delta_1 >0$ and
$\delta_2 >0$ such that for all sufficiently large $n$,
$\|(g_n^{(1)})_x\| \ge \delta_1$ and $\|(g_n^{(2)})_x\| \ge
\delta_2$. (Note that by Lemma \ref{bdfnxbelow}, this is still
true even when $s_1 = 0$ or $s_2 = 0$.) So, letting $\delta =
\min(\delta_1,\delta_2)> 0$, \eqref{defKn} gives $K_n \ge
3\delta/4$ for all sufficiently large $n$. From \eqref{subaddKn}
we then have that
\begin{equation}
I(s_1+t_1,s_2+t_2) \le I(s_1,t_1)+I(s_2,t_2) - 3\delta/4 <
I(s_1,t_1)+I(s_2,t_2), \label{subaddwdelta}
\end{equation} as
desired.

In case (ii), we have $t_1=0$, $t_2 > 0$, $s_2 >0$, and, since
$s_1+t_1 >0$ by assumption, $s_1 >0$ also.  By Lemma
\ref{bdfnxbelow} there exists $\delta_1 > 0$ such that for all
sufficiently large $n$, $\|(f_n^{(1)})_x\| \ge \delta_1$.

If, in case (ii), $\beta_1> 0$, then by Lemma \ref{bdfnxbelow}
there also exists $\delta_2
>0$ such that for all sufficiently large $n$,
$\|f_{nx}^{(2)}\| \ge \delta_2$.  Letting $\delta =
\min(\delta_1,\delta_2)> 0$, we get $K_n \ge 3\delta/4$ for large
$n$, and \eqref{subaddwdelta} follows from \eqref{subaddKn} as in
case (i).

On the other hand, if in case (ii) we have $\beta_1 =0$, then by
Lemma \ref{bdfnxbelow} we have $I(s_1,t_1)=I(s_1,0)=0$, and
$I(s_1+s_2,t_1+t_2)=I(s_1+s_2,t_2)$ is the infimum of
\begin{equation}
E(f,g)=\int_{-\infty}^\infty \left(|f_x|^2 + g_x^2 -\beta_2
g^{p+2} -\alpha |f|^2 g\right)\ dx \label{Ebeta10}
\end{equation}
over all $f \in H^1_{\mathbb C}$ and $g \in H^1$ such that
$\|f\|^2 = s_1+s_2$ and $\|g\|^2=t_2$.  By Lemma \ref{bdmixbelow},
there exists $\delta >0$ such that for all sufficiently large $n$,
\begin{equation*}
 \int_{-\infty}^\infty\left(|f_{nx}^{(2)}|^2
-\alpha|f_n^{(2)}|^2g_n^{(2)}\right)\ dx \le -\delta.
\end{equation*}
Let
\begin{equation}
f_n = \sqrt{\frac{s_1 + s_2}{s_2}} f_n^{(2)};
\end{equation}
then $\|f_n\|^2 = s_1+s_2$ and from \eqref{Ebeta10} we see that,
for all sufficiently large $n$,
\begin{equation}
\begin{aligned}
I(s_1+s_2,t_2) \le E(f_n,g_n^{(2)}) &= E(f_n^{(2)},g_n^{(2)}) +
\frac{s_1}{s_2}
 \int_{-\infty}^\infty\left(|f_{nx}^{(2)}|^2
-\alpha|f_n^{(2)}|^2g_n^{(2)}\right)\ dx \\
\le E(f_n^{(2)},g_n^{(2)}) -\frac{s_1\delta}{s_2}.
\end{aligned}
\end{equation}
This implies, after taking the limit as $n \to \infty$, that
\begin{equation}
I(s_1+s_2,t_2) \le I(s_2,t_2) -\frac{s_1\delta}{s_2} <
I(s_2,t_2)=I(s_1,t_1)+I(s_2,t_2),
\end{equation}
as desired.  Thus the proof is complete in case (ii).

In case (iii), we have $s_1>0$ and $t_2>0$, and we have to prove
\begin{equation}
I(s_1,t_2) < I(s_1,0)+I(0,t_2). \label{caseiii}
\end{equation}
Let $g_0$ be as defined in Lemma \ref{minforJ} with $t=t_2$, so
that $I(0,t_2)=J(g_0)$.

 If $\beta_1>0$, we have from Lemma \ref{Is0} that
$I(s_1,0)=\tilde J(f_0)$, where $f_0$ is as defined in \eqref{f0}
with $s=s_1$. Clearly,
\begin{equation*}
\int_{-\infty}^\infty |f_0|^2 g_0\ dx >0,
\end{equation*}
and so
\begin{equation}
\begin{aligned}
 I(s_1,t_2) \le E(f_0,g_0) &= \tilde J(f_0) + J(g_0) +
\int_{-\infty}^\infty |f_0|^2 g_0\ dx  \\
  & < \tilde J(f_0) + J(g_0) = I(s_1,0)+I(0,t_2),
  \end{aligned}
  \end{equation}
  as desired.

On the other hand, if $\beta_1 = 0$, then  $I(s_1,0)=0$ by Lemma
\ref{bdfnxbelow}.  By Lemma \ref{minfforg}, there exists $f \in
H^1$ such that $\|f\|^2 = s_1$ and
\begin{equation}
\int_{-\infty}^\infty \left(f_x^2 - \alpha f^2 g_0\right)\ dx < 0,
\end{equation}
and hence
\begin{equation}
I(s_1,t_1) \le E(f,g_0) = \int_{-\infty}^\infty \left(f_x^2 -
\alpha f^2 g_0\right)\ dx + J(g_0) < J(g_0),
\end{equation}
which proves \eqref{caseiii}.  The proof of Lemma \ref{subadd} is
now complete in all cases.
\end{proof}

We now turn to the proof of Theorem \ref{existence}, which, once
the subadditivity lemma \ref{subadd} has been established,
proceeds by largely the same argument as in \cite{[AA]}.

The first step is to establish the relative compactness, up to
translations, of minimizing sequences for $I(s,t)$. Let
$\{(f_{n},g_{n})\}$ be a given minimizing sequence, and define an
associated sequence of functions $\rho_n$ by
\begin{equation*}
\rho_n=|f_n|^2+g_n^2.
\end{equation*}
We then have
\begin{equation*}
\int_{-\infty }^{\infty }\rho _{n}(x)\ dx = s+t
\end{equation*}
for all $n$. The sequence of functions $M_{n}:[0,\infty)\to
[0,s+t]$ defined by
\begin{equation*}
M_n(r)=\sup_{y\in \mathbb{R}}\int_{y-r}^{y+r}\rho_n(x)\ dx.
\end{equation*}
is a uniformly bounded sequence of nondecreasing functions on
$[0,\infty)$, and therefore (by Helly's selection theorem, for
example) has a subsequence, which we will still denote by $M_n$,
that converges pointwise to a nondecreasing function $M$
on $[0,\infty)$.  Then
\begin{equation}
\label{defgamma} \gamma =\lim_{r\to \infty }M(r)
\end{equation}
exists and satisfies $0\leq \gamma \leq s+t$.

We claim now that $\gamma > 0$.  To prove this, we require the following
lemma.

\begin{lem}  \label{Lionsvanish}
Suppose $w_n$ is a sequence of functions which is bounded in $H^1$
and which satisfies, for some $R >0$,
\begin{equation}
\lim_{n \to \infty} \sup_{y \in \mathbb R} \int_{y-R}^{y+R} w_n^2\ dx = 0.
\label{vanishhypo}
\end{equation}
Then for every $r>2$,
\begin{equation*}
\lim_{n \to \infty} |w_n|_r = 0.
\end{equation*}
\end{lem}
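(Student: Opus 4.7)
The plan is to adapt the classical vanishing argument of Lions. First, I would reduce the general $r>2$ case to proving just $|w_n|_4 \to 0$. Indeed, the 1D Sobolev embedding $H^1(\mathbb R)\hookrightarrow L^\infty(\mathbb R)$ gives the uniform bound $|w_n|_\infty \le C\|w_n\|_1$, while $|w_n|_2 \le \|w_n\|_1$ is also uniformly bounded by hypothesis. Simple H\"older interpolation then yields
\begin{equation*}
|w_n|_r \le |w_n|_2^{(4-r)/r}|w_n|_4^{2(r-2)/r}\ \ (2<r\le 4), \qquad |w_n|_r \le |w_n|_4^{4/r}|w_n|_\infty^{(r-4)/r}\ \ (r>4),
\end{equation*}
and in either case $|w_n|_r\to 0$ follows at once from $|w_n|_4\to 0$.

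To prove the $L^4$ decay, I would partition $\mathbb R$ into consecutive intervals $I_k=[kR,(k+1)R)$ for $k\in\mathbb Z$, and consider the enlargements $\tilde I_k=[(k-1)R,(k+2)R]$, each of length $3R$. These have bounded overlap: each $x\in\mathbb R$ lies in exactly three of the $\tilde I_k$, so $\sum_k \|w_n\|_{H^1(\tilde I_k)}^2 \le 3\|w_n\|_1^2$. Applying the 1D Sobolev embedding $H^1(J)\hookrightarrow L^\infty(J)$ on each $\tilde I_k$, with a constant $C_R$ independent of $k$ by translation invariance, I obtain $\sup_{I_k}|w_n| \le C_R\|w_n\|_{H^1(\tilde I_k)}$.

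Combining this with the elementary estimate $\int_{I_k}|w_n|^4\,dx \le (\sup_{I_k}|w_n|)^2 \int_{I_k}w_n^2\,dx$ and summing over $k\in\mathbb Z$, I would arrive at
\begin{equation*}
|w_n|_4^4 \le C_R^2\Bigl(\sup_k \int_{I_k} w_n^2\,dx\Bigr)\sum_k \|w_n\|_{H^1(\tilde I_k)}^2 \le 3C_R^2 \Bigl(\sup_y\int_{y-R}^{y+R}w_n^2\,dx\Bigr)\|w_n\|_1^2.
\end{equation*}
The first parenthesized factor tends to $0$ by the standing hypothesis on $w_n$, while $\|w_n\|_1$ is uniformly bounded. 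Therefore $|w_n|_4 \to 0$, which together with the interpolation reduction finishes the proof.

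I do not anticipate any substantial obstacle; the lemma is a standard tool in concentration-compactness theory, and a similar statement (in many dimensions) appears in Lions' original papers. The only minor points requiring care are verifying that the 1D Sobolev constant on an interval of length $3R$ is independent of translation (so uniform in $k$), and that the bounded-overlap property of the $\tilde I_k$ gives the summability of $\|w_n\|_{H^1(\tilde I_k)}^2$ in terms of $\|w_n\|_1^2$.
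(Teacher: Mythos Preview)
Your proof is correct and follows essentially the same approach as the paper's: both establish a local Sobolev-type estimate on intervals of fixed length, then sum over a bounded-overlap cover of $\mathbb R$ to convert the vanishing local $L^2$ mass into vanishing global $L^r$ norm. The only cosmetic difference is that you first reduce to $r=4$ by interpolation and then use the elementary $L^\infty$ embedding, whereas the paper applies a Gagliardo--Nirenberg inequality directly for general $r>2$; both are standard variants of Lions' original argument.
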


\begin{proof} This is a special case of Lemma I.1 of part 2 of \cite{[L]}, but for the sake
of completeness we give a proof here.  Let
\begin{equation}
\epsilon_n = \sup_{y \in \mathbb R} \int_{y-R}^{y+R} w_n^2\ dx,
\end{equation}
so that $\displaystyle \lim_{n \to \infty} \epsilon_n = 0$.
For every $y \in \mathbb R$, we have by standard Sobolev inequalities
(see Theorem 10.1 of \cite{[F]}) that
\begin{equation*}
\int_{y-R}^{y+R} |w_n|^r\ dx \le C \left(\int_{y-R}^{y+R} |w_n|^2\ dx\right)^s\left(\int_{y-R}^{y+R}\left(w_n^2+w_{nx}^2\right)\ dx\right)^{1+s},
\end{equation*}
where $s=(r-2)/4$.  It then follows from \eqref{vanishhypo} that
\begin{equation}\label{piecesofR}
\begin{aligned}
\int_{y-R}^{y+R} |w_n|^r\ dx  &\le C\epsilon_n^s\left(\int_{y-R}^{y+R}\left(w_n^2+w_{nx}^2\right)\ dx\right)\|w_n\|_1^s \\
& \le C \epsilon^s\int_{y-R}^{y+R}\left(w_n^2+w_{nx}^2\right)\ dx.\\
\end{aligned}
\end{equation}
Now if we cover
$\mathbb R$ by intervals of length $R$ in such a way that each point of $\mathbb R$ is contained in at most two
of the intervals, then by summing \eqref{piecesofR} over all the intervals in the cover, we obtain that
\begin{equation*}
|w_n|_r \le 3C \epsilon_n^s\|w_n\|_1^2 \le C \epsilon_n^s,
\end{equation*}
from which the desired result follows.
\end{proof}

Next we prove that
\begin{equation}
\label{gamnezero}
 \gamma \ne 0.
\end{equation}
Indeed, suppose for the sake of contradiction that $\gamma =0$.
Then \eqref{vanishhypo} holds both for $w_n=|f_n|$ and for
$w_n=g_n$.  Since both $\{|f_n|\}$ and $\{g_n\}$ are bounded
sequences in $H^1$ by Lemma \ref{Ibounded}, then Lemma
\eqref{Lionsvanish} implies that for every $r
> 2$, $f_n$ and $g_n$ converge to 0 in $L^r$ norm.  Since
\begin{equation*}
\left\vert \int_{-\infty }^{\infty }|f_n|^2g_n\ dx\right\vert \leq
|f_n|_4^{1/2}\|g_n\|
\end{equation*}
and $\|g_{n}\|$ is bounded, it follows also that
\begin{equation*}
\lim_{n \to \infty} \int_{-\infty }^{\infty }|f_n|^2g_n\ dx =0.
\end{equation*}
Hence
\begin{equation}
I(s,t)=\lim_{n\to \infty }E(f_n,g_n) \ge \liminf_{n \to
\infty}\int_{-\infty }^{\infty }\left( |f_{nx}|^2+g_{nx}^2\right)
dx\geq 0,
\label{istgezero}
\end{equation}
contradicting Lemma~\ref{Ibounded}. This proves \eqref{gamnezero}.

\begin{lem} \label{DIC} Suppose $\gamma$ is defined as in \eqref{defgamma}.  Then there exist numbers $s_1 \in [0,s]$ and $t_1 \in
[0,t]$ such that
\begin{equation}
\label{SUM} \gamma = s_1+t_1
\end{equation}
and
\begin{equation} \label{REV}
I(s_{1},t_{1})+I(s-s_{1},t-t_{1})\leq I(s,t).
\end{equation}
\end{lem}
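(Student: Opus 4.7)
The plan is to carry out the standard Lions dichotomy splitting, adapted to two components and two constraints. First, using the fact that $M$ is nondecreasing and bounded above by $s+t$, choose a sequence $r_k \to \infty$ such that both $M(r_k)$ and $M(2r_k)$ converge to $\gamma$. For each $n$, choose a center $y_n$ (depending also on $k_n$) such that $\int_{y_n - r_{k_n}}^{y_n + r_{k_n}} \rho_n\, dx$ differs from $M_n(r_{k_n})$ by at most $1/n$, where $k_n \to \infty$ is chosen slowly enough that all required convergences hold. Fix a smooth cutoff $\chi$ with $\chi \equiv 1$ on $[-1,1]$, $\chi \equiv 0$ outside $(-2,2)$, and set $\tilde\chi = \sqrt{1-\chi^2}$ (smoothed to be $C^\infty$). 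Define
\begin{equation*}
f_n^{(1)}(x) = \chi\!\left(\tfrac{x-y_n}{r_{k_n}}\right) f_n(x), \qquad f_n^{(2)}(x) = \tilde\chi\!\left(\tfrac{x-y_n}{r_{k_n}}\right) f_n(x),
\end{equation*}
and analogously $g_n^{(1)}, g_n^{(2)}$.

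The next step is to analyze the $L^2$ masses. Since $M_n(2r_{k_n}) - M_n(r_{k_n}) \to 0$, the mass of $\rho_n$ in the annulus $r_{k_n} \le |x - y_n| \le 2r_{k_n}$ vanishes as $n \to \infty$. Hence $\|f_n^{(1)}\|^2 + \|f_n^{(2)}\|^2 = \|f_n\|^2 + o(1)$ and similarly for $g$. Passing to a subsequence (via a diagonal argument), I obtain
\begin{equation*}
\|f_n^{(1)}\|^2 \to s_1, \quad \|g_n^{(1)}\|^2 \to t_1, \quad \|f_n^{(2)}\|^2 \to s - s_1, \quad \|g_n^{(2)}\|^2 \to t - t_1,
\end{equation*}
with $s_1 \in [0,s]$, $t_1 \in [0,t]$, and $s_1 + t_1 = \gamma$ (using that the inner region carries mass $\gamma$ in the limit and the outer region carries $s+t-\gamma$).

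The third step is the asymptotic energy splitting
\begin{equation*}
E(f_n^{(1)}, g_n^{(1)}) + E(f_n^{(2)}, g_n^{(2)}) \le E(f_n, g_n) + o(1).
\end{equation*}
The kinetic terms yield $|f_{nx}^{(1)}|^2 + |f_{nx}^{(2)}|^2 = (\chi^2 + \tilde\chi^2)|f_{nx}|^2 + O(1/r_{k_n}^2)\rho_n + O(1/r_{k_n})|f_{nx}||f_n|$, and the error terms integrate to $O(1/r_{k_n})$ since $f_n$ is bounded in $H^1$; analogously for $g$. The $L^{q+2}$ norm of $f_n$ splits as the sum of the $L^{q+2}$ norms of $f_n^{(1)}$ and $f_n^{(2)}$ modulo an error controlled by $\int_{r_{k_n} \le |x-y_n| \le 2r_{k_n}} |f_n|^{q+2}\, dx$, which tends to zero by \eqref{fLqbound}, \eqref{gLpbound} applied locally together with the annular $L^2$ decay. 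Most importantly, the coupling cross-terms $\int \chi^2 \tilde\chi^2 |f_n|^2 g_n$ and their analogues are supported in the annulus and therefore vanish by H\"older's inequality.

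The final step is to rescale and invoke the definition of $I$. Provided $s_1 > 0$ and $t_1 > 0$, set $\tilde f_n^{(1)} = \sqrt{s_1}\, f_n^{(1)}/\|f_n^{(1)}\|$ and $\tilde g_n^{(1)} = \sqrt{t_1}\, g_n^{(1)}/\|g_n^{(1)}\|$; these multipliers tend to $1$, so by continuity of $E$ on $Y$ (Lemma \ref{Econt}), $E(\tilde f_n^{(1)}, \tilde g_n^{(1)}) = E(f_n^{(1)}, g_n^{(1)}) + o(1)$, giving $I(s_1,t_1) \le E(f_n^{(1)},g_n^{(1)}) + o(1)$, and analogously for the outer piece. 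Summing and using the previous step yields $I(s_1,t_1) + I(s-s_1,t-t_1) \le \lim E(f_n,g_n) = I(s,t)$. Boundary cases where one of $s_1, t_1, s-s_1, t-t_1$ vanishes are handled by interpreting $I(0,0)=0$ and, when only one of the two coordinates vanishes, by rescaling only the nonzero component (using the definition of $I$ extended to the relevant semi-axis as already done implicitly in Lemmas \ref{bdgnxbelow} and \ref{bdfnxbelow}). The main technical obstacle is this last bookkeeping around degenerate splittings and the clean vanishing of the coupling cross-term; once these are handled, the argument is the standard concentration-compactness dichotomy.
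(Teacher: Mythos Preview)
Your argument is essentially the same as the paper's: both carry out the standard Lions dichotomy splitting via a smooth partition $\chi^2+\tilde\chi^2=1$ centered at near-maximizers of the concentration function, exploit the vanishing of the annular mass to obtain the asymptotic energy splitting, and then rescale to compare with $I(s_1,t_1)+I(s-s_1,t-t_1)$.  The only cosmetic difference is that you run a diagonal argument with $r_{k_n}\to\infty$ directly, whereas the paper fixes a cutoff scale $\omega$ for each $\epsilon>0$ and sends $\epsilon\to 0$ at the end; these are equivalent.

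One point deserves a more careful write-up than you give it.  In the degenerate cases (say $s_1=0$, $t_1>0$), ``rescaling only the nonzero component'' is not by itself enough: after rescaling $g_n^{(1)}$ you still carry the $f_n^{(1)}$-terms in $E(f_n^{(1)},\tilde g_n^{(1)})$, and you must argue that these do not spoil the lower bound.  The paper handles this by observing that when $\|f_n^{(1)}\|\to 0$, the Gagliardo--Nirenberg estimates \eqref{fLqbound} and \eqref{mixtermbound} force the nonlinear and coupling terms involving $f_n^{(1)}$ to vanish, while the kinetic term $\int|f_{nx}^{(1)}|^2$ is nonnegative and may simply be dropped, yielding $\liminf E(f_n^{(1)},g_n^{(1)})\ge I(0,t_1)$.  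You should make this step explicit rather than leave it to the reader.
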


\begin{proof}  Since the proof is almost the same as the proof of Lemma 3.10 of \cite{[AA]}, with only
slight modifications, we just give an outline here, and refer to \cite{[AA]} for the details.  Let $\rho$ and
$\sigma$ be smooth functions on $\mathbb R$ such that $\rho^2 + \sigma^2 = 1$ on $\mathbb R$, and $\rho$ is identically 1
on $[-1,1]$ and is supported in $[-2,2]$; and define $\rho_\omega(x)=\rho(x/\omega)$ and $\sigma_\omega(x)=\sigma(x/\omega)$
for $\omega > 0$.  From the definition of $\gamma$ it follows that for given $\epsilon >0$, there exist $\omega > 0$
and a sequence $y_n$ such that, after passing to a subsequence, the functions $(f_n^{(1)}(x), g_n^{(1)}(x))=\rho_\omega(x-y_n)(f_n(x),g_n(x))$
and $(f_n^{(2)}(x), g_n^{(2)}(x))=\sigma_\omega(x-y_n)(f_n(x),g_n(x))$ satisfy
$\|f_n^{(1)}\|^2 \to s_1$, $\|g_n^{(1)}\|^2 \to t_1$, $\|f_n^{(2)}\|^2 \to s-s_1$, and $\|g_n^{(2)}\|^2 \to t-t_1$ as $n \to \infty$,
where $|(s_1+t_1)-\alpha|<\epsilon$, and
\begin{equation} \label{e12ineq}
E(f_n^{(1)},g_n^{(1)})+E(f_n^{(2)},g_n^{(2)}) \le E(f_n,g_n) + C\epsilon
\end{equation}
for all $n$.  To prove \eqref{e12ineq}, one writes
\begin{equation*}
\begin{aligned}
&E(f_n^{(1)},g_n^{(1)})= \int_{-\infty}^\infty
\rho_\omega^2\left(|f_{nx}|^2+g_{nx}^2-\beta_1 |f_n|^{q+2}-\beta_2
g_n^{p+2}-\alpha|f_n|^2g_n\right)\ dx\\
&\ \ +\int_{-\infty}^\infty\left((\rho_\omega^2-\rho_\omega^{q+2})\beta_1|f_n|^{q+2}
+(\rho_\omega^2-\rho_\omega^{p+2})\beta_2|g_n|^{p+2}
+(\rho_\omega^2-\rho_\omega^3)\alpha|f_n|^2g_n\right)\ dx\\
&\ \   +\int_{-\infty}^\infty\left((\rho_\omega')^2(|f_{nx}|^2 + g_{nx}^2)+2\rho_\omega\rho_\omega'(\text{Re}\ f_n(\overline{f_{n}})_x + g_ng_{nx}
)+(\rho_\omega')^2|f|^2\right)\ dx,
\end{aligned}
\end{equation*}
and observes that the last two integrals on the right hand side can be made arbitrarily uniformly small by taking $\omega$ sufficiently large.
A similar estimate obtains for $E(f_n^{(2)},g_n^{(2)})$, and \eqref{e12ineq} follows by adding the two estimates and using $\rho_\omega^2 + \sigma_\omega^2 = 1$.

  Now we show that the limit inferior as $n \to \infty$ of the
left-hand side of \eqref{e12ineq} is greater than or equal to $I(s_1,t_1) + I(s-s_1,t-t_1)$.  If $s_1$, $t_1$, $s-s_1$, and $t-t_1$ are all positive,
this follows by rescaling $f_n^{(i)}$ and $g_n^{(i)}$ for $i=1,2$ so that
$\|f_n^{(1)}\|^2 = s_1$, $\|g_n^{(1)}\|^2 = t_1$, $\|f_n^{(2)}\|^2 = s-s_1$, and $\|g_n^{(2)}\|^2 = t-t_1$, since the scaling factors
tend to 1 as $n \to \infty$.  On the other hand, if $s_1=0$ and $t_1>0$ then as in \eqref{istgezero} we have
\begin{equation*}
\begin{aligned}
\lim_{n \to \infty}E(f_n^{(1)},g_n^{(1)})&=\lim_{n \to
\infty}\int_{-\infty}^\infty\left(|f_{nx}^{(1)}|^2+(g_{nx}^{(1)})^2
-\beta_2(g_n^{(1)})^{q+2}\right)\ dx\\
&\ge \liminf_{n\to \infty}\int_{-\infty}^\infty((g_{nx}^{(1)})^2
-\beta_2(g_n^{(1)})^{q+2})\ dx \ge I(0,t_1),
\end{aligned}
\end{equation*}
and similar estimates hold if $t_1$, $s-s_1$, or $t-t_1$ are zero.

Taking then the limit inferior of the left-hand side and the limit of the right-hand side of \eqref{e12ineq} as $n \to \infty$,
we obtain
\begin{equation*}
I(s_{1},t_{1})+I(s-s_{1},t-t_{1})\leq I(s,t) + C\epsilon,
\end{equation*}
which proves \eqref{REV}, as $\epsilon$ is arbitrary.
\end{proof}

We claim now that
\begin{equation}
\gamma = s+t. \label{gameqspt}
\end{equation}
Suppose to the contrary that $\gamma < s + t$.   Let $s_1$ and
$t_1$ be as defined in Lemma~\ref{DIC}, and let $s_2=s-s_1$ and
$t_2=t-t_1$. Then $s_{2}+t_{2}=(s+t)-\gamma >0$, and also
\eqref{gamnezero} and \eqref{SUM} imply that $s_{1}+t_{1}>0$.
Moreover, $s_{1}+s_{2}=s>0$ and $t_{1}+t_{2}=t>0$. Therefore
Lemma~\ref{subadd} implies that that \eqref{SUBA} holds. But this
contradicts \eqref{REV}.  Thus \eqref{gameqspt} is proved.

Once \eqref{gameqspt} has been established, assertion (ii) of
Theorem \ref{existence}, concerning the relative compactness of
minimizing sequences up to translation, follows from general
principles.  We again only outline the proof here and refer the
reader to \cite{[AA]} for more details.  First, \eqref{gameqspt}
immediately implies that for some sequence $y_n$ of real numbers
and some fixed subsequence of $(f_n,g_n)$, denoted again by
$(f_n,g_n)$, and for every $k \in \mathbb{N}$, there exists
$\omega _{k}\in \mathbb{R}$ such that
\begin{equation} \label{EJ10}
\int_{-\omega_k}^{\omega_k}\left(
|f_n(x+y_n)|^2+g_n(x+y_n)^2\right) \ dx \ge s+t-\frac{1}{k}.
\end{equation}
for all $n \in \mathbb N$.  (In other words, the measures
\begin{equation*}
\mu_n = (|f_n(x+y_n)|^2+g_n(x+y_n)^2)\ dx
\end{equation*}
 form a
``tight'' family on $\mathbb R$, in the sense that for every
$\epsilon >0$, there exists a fixed compact set $K$ such that
$\mu_n(\mathbb R \backslash K) < \epsilon$ for all $n \in \mathbb
N$.)

From \eqref{EJ10} and the compactness of the embedding of $H^1$
into $L^2$ on finite domains, it follows that some further
subsequence of $(f_n(x+y_n),g(x+y_n))$ converges, strongly in
$L^2(\mathbb R)\times L^2(\mathbb R)$ and weakly in $Y$, to a
limit $(\phi,\psi)$. Estimates such as \eqref{GLforf} and
\eqref{mixtermbound}, together with the weak lower semicontinuity
of the Hilbert space norm in $Y$, imply that
\begin{equation}
\lim_{n \to \infty} E(f_n,g_n) \ge E(\phi,\psi);
\end{equation}
but since $(f_n,g_n)$ is a minimizing sequence, this in turn
implies that
\begin{equation}
\lim_{n \to \infty} E(f_n,g_n) = E(\phi,\psi).
\end{equation}
Therefore one has
\begin{equation*}
\lim_{n\to \infty }\int_{-\infty }^{\infty }\left(|f_{nx}|
^2+g_{nx}^2\right) \ dx=\int_{-\infty }^{\infty }\left(
|\phi_x|^2+\psi_x^2\right) \ dx,
\end{equation*}
so $(f_n(x+y_n),g_n(x+y_n))$ converges strongly to $(\phi,\psi)$
in the norm of $Y$.

Since $(\phi,\psi)$ is in the minimizing set $\mathcal S_{s,t}$
for $I(s,t)$, and so minimizes $E(u,v)$ subject to $H(u)$ and
$H(v)$ being held constant, the Lagrange multiplier principle
(see, for example, Theorem 7.7.2 of \cite{[Lu]}) asserts that
there exist real numbers $\sigma$ and $c$ such that
\begin{equation}
\delta E(\phi,\psi) = \sigma \delta H(\phi)+c \delta H(\psi),
\end{equation}
where $\delta$ denotes the Fr\'echet derivative.  Computing the
Fr\'echet derivatives we see that this means that equations
\eqref{ODE} hold, at least in the sense of distributions.  But since the right-hand
sides of the equations in \eqref{ODE} are continuous functions of the unknowns, distributional
solutions are also classical solutions (cf.\ Lemma 1.3 of \cite{[T]}). This
then proves assertion (iii) of Theorem \ref{existence}.

It remains to prove the assertions in part (iv) of Theorem
\ref{existence}.

Multiplying the first equation in \eqref{ODE} by $\overline \phi$
and integrating over $\mathbb R$, we have after an integration by
parts that
\begin{equation}
\label{intforsigma} \int_{-\infty}^\infty \left(|\phi'|^2 - \tau_1
|\phi|^{q+2} - \alpha |\phi|^2\psi\right)\ dx = -\sigma
\int_{-\infty}^\infty |\phi|^2\ dx = -\sigma s.
\end{equation}
In particular, it follows from \eqref{intforsigma} that $\sigma$
is real. Similarly, multiplying the second equation in \eqref{ODE}
by $\psi$ and integrating over $\mathbb R$ yields
\begin{equation}
\label{intforc} \int_{-\infty}^\infty \left(|\psi'|^2 -
\frac{\tau_2}{p+1} \psi^{p+2} - \frac{\alpha}{2}
|\phi|^2\psi\right)\ dx = -c \int_{-\infty}^\infty |\psi|^2\ dx =
-ct.
\end{equation}

From Lemma \ref{bdmixbelow}, applied to the constant sequence
$(f_n,g_n)=(\phi,\psi)$, we have that
\begin{equation}
\label{firstint} \int_{-\infty}^\infty \left(|\phi'|^2 - \tau_1
|\phi|^{q+2} - \alpha |\phi|^2\psi\right)\ dx  < 0,
\end{equation}
and since $\tau_1 = \beta_1(q+2)/2 > \beta_1$, it follows that the
integral on the left-hand side of \eqref{intforsigma} is negative,
 and so we must have $\sigma > 0$.  Therefore, a calculation with the Fourier transform shows that
 the operator $-\partial_x^2 + \sigma$ appearing in the first equation of \eqref{ODE} is invertible on $H^1_{\mathbb C}$, with inverse given
by convolution with the function
\begin{equation*}
K_\sigma (x)= \frac{1}{2\sqrt{\sigma}}e^{-\sqrt \sigma |x|}.
\end{equation*}
The first equation of \eqref{ODE} can then be rewritten in the
form
\begin{equation}
\label{convforphi} \phi=K_\sigma \star\left(\tau_1|\phi|^q\phi +
\alpha \phi\psi\right),
\end{equation}
where $\star$ denotes convolution as in \eqref{defconvol}.

Now we observe that it follows from the first equation in
\eqref{ODE} that there exist $\theta \in \mathbb R$ and a
real-valued function $\tilde \phi(x)$ such that
$\phi(x)=e^{i\theta}\tilde\phi(x)$ on $\mathbb R$.  This is proved
for the case $\tau_1=0$ in part (iii) of Theorem 2.1 of
\cite{[AA]}, and it is easy to check that the same proof works as
well when $\tau_1 \ne 0$.

Note next that $(\tilde\phi,|\psi|)$ and $(|\tilde\phi|,|\psi|)$
are also in $\mathcal S_{s,t}$, as follows from Lemma
\eqref{posmin}.  Therefore, if we let $w = |\tilde \phi|$, then
$\tilde \phi$ and $w$ satisfy the Lagrange multiplier equations
\begin{equation}
\begin{aligned}
-\tilde\phi''+\sigma \tilde \phi & =\tau_1w^q\tilde\phi+\alpha \tilde\phi|\psi| \\
-w''+\sigma w & =\tau_1w^qw+\alpha w|\psi|.
\end{aligned}
\end{equation}
(That the Lagrange multiplier $\sigma$ is the same in both
equations follows from the fact that $\sigma$ is determined by the
equation \eqref{intforsigma}, and this equation is unchanged when
$\phi$ is replaced by $w$.) Multiplying the first equation by $w$
and the second equation by $\tilde \phi$, and subtracting the two
equations, we find that the $w\tilde\phi'' - \tilde \phi w'' = 0$.
Therefore the Wronskian $w\tilde\phi' - \tilde \phi w'$ of $w$ and
$\tilde \phi$ is constant, and since $w$ and $\tilde\phi$ are both
in $H^1$, this constant must be zero.  So $w$ and $\tilde \phi$
are constant multiples of each other, and hence $\tilde \phi$,
like $w$, must be of one sign on $\mathbb R$.  By replacing
$\theta$ by $\theta + i\pi$ if necessary, we can assume that
$\tilde \phi \ge 0$ on $\mathbb R$.

We claim that
\begin{equation}
\label{psiclaim} \int_{-\infty}^\infty|\phi|^2|\psi| \
dx=\int_{-\infty}^\infty|\phi|^2\psi \ dx.
\end{equation}
To prove this, note that since
$E(|\phi|,|\psi|)=E(|\phi|,\psi)=I(s,t)$, we have
\begin{equation}
\alpha \int_{-\infty}^\infty |\phi|^2(|\psi|-\psi)\ dx=
\int_{-\infty}^\infty \left((|\psi_x|^2-\psi_x^2)
-\beta_2(|\psi|^{p+2}-\psi^{p+2})\right)\ dx.
\end{equation}
Using \eqref{kinendec}, we see that the right-hand side of this
equation is less than or equal to zero, so we must have
\begin{equation}
\alpha \int_{-\infty}^\infty |\phi|^2(|\psi|-\psi)\ dx \le 0
\end{equation}
also.  But since the integrand is non-negative, this proves
\eqref{psiclaim}.

From \eqref{psiclaim} it follows that $\psi(x) \ge
0$ at every point $x$ in $\mathbb R$ for which $\tilde \phi(x) \ne
0$. Now \eqref{convforphi} implies that
\begin{equation}
\label{convfortildephi} \tilde\phi=K_\sigma
\star\left(\tau_1|\tilde \phi|^q\tilde\phi + \alpha \tilde
\phi\psi\right).
\end{equation}
Since the convolution of $K_\sigma$ with a function that is
 everywhere non-negative and not
identically zero must produce an everywhere positive function, it
follows that $\tilde \phi(x)>0$ for
all $x \in \mathbb R$.  But this in turn implies that
$\psi(x) \ge 0$ for all $x \in \mathbb R$.

Now suppose, for the sake of contradiction, that $\psi(x_0)=0$ for some $x_0 \in \mathbb R$.
Then from the preceding paragraph it follows that $x_0$ is a point where $\psi$ takes its minimum
value over $\mathbb R$, and therefore we must have $\psi'(x_0)=0$.  But then standard uniqueness theory
for ordinary differential equations, applied to the second equation in \eqref{ODE} viewed as an inhomogeneous equation
for $\psi$, yields that $\psi$ must be identically zero on its entire interval of existence about $x_0$, which in
this case is $\mathbb R$.  But this contradicts the fact that $\|\psi\|^2 = t >0$.  Therefore $\psi$ must
be everywhere positive on $\mathbb R$.

Finally, since $\psi$ and $|\phi|$ are everywhere positive on $\mathbb R$, and the right-hand sides
of the equations in \eqref{ODE} are infinitely differentiable functions of $\phi$ and $\psi$ on the domain
$\{(\phi,\psi)\in \mathbb C \times \mathbb R: |\phi|>0 \ \text{and}\ \psi >0\}$, it follows from the standard
theory of ordinary differential equations that any solution of \eqref{ODE} must be infinitely differentiable
on its interval of existence.

This completes the proof of Theorem \ref{existence}.

\section{Stability of Solitary-Wave Solutions}
\label{sec:stability}

In this section we prove the stability result given in Theorem
\ref{stability} by considering the variational problem $W(s,t)$
defined in \eqref{Wst}.   In particular, we show that arbitrary
minimizing sequences for $W(s,t)$ converge, up to subsequences and
translations, to elements of the minimizing set
$\mathcal{F}_{s,t}$ defined in \eqref{Fst}.  To do this, we relate
the problem in \eqref{Wst} to that in \eqref{Ist}, following the
method used in \cite{[AA]}.

\begin{lem}\label{bounded}  Suppose $1 \le q < 4$ and $1 \le p <
4/3$, and let $s>0$ and $t \in \mathbb{R}$. If $\{(h_{n},g_{n})\}$
is a minimizing sequence for $W(s,t)$, then $\{(h_{n},g_{n})\}$ is
bounded in $Y$.
\end{lem}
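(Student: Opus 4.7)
The plan is to bound each of $\|h_n\|$, $\|h_{nx}\|$, $\|g_n\|$, and $\|g_{nx}\|$ separately. The bound on $\|h_n\|$ is immediate from $H(h_n)\to s$. The genuinely new difficulty compared with Lemma \ref{Ibounded} is that the constraint $G(h_n,g_n)\to t$ does not directly control $\|g_n\|$, because $G$ couples $\|g_n\|^2$ to the imaginary-part integral involving $h_n$ and $h_{nx}$. To handle this, I would first rearrange $G(h_n,g_n)$ as
\begin{equation*}
\|g_n\|^2 = G(h_n,g_n) - \text{Im}\int_{-\infty}^\infty h_n\overline{h_{nx}}\ dx,
\end{equation*}
and apply Cauchy-Schwarz to obtain, for all sufficiently large $n$,
\begin{equation*}
\|g_n\|^2 \le |G(h_n,g_n)| + \|h_n\|\cdot\|h_{nx}\| \le C\bigl(1 + \|h_{nx}\|\bigr),
\end{equation*}
with $C$ depending only on $s$ and $t$.

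Next I would bound the nonlinear terms in $E(h_n,g_n)$ by the same Gagliardo-Nirenberg inequalities used in the proof of Lemma \ref{Ibounded}. Since $\|h_n\|$ is bounded, we have $|h_n|_{q+2}^{q+2}\le C\|h_{nx}\|^{q/2}$, and the H\"older estimate $\int|h_n|^2|g_n|\ dx \le |h_n|_4^2\|g_n\|$ combines with the bound above on $\|g_n\|$ to give
\begin{equation*}
\int|h_n|^2 g_n\ dx \le C\|h_{nx}\|^{1/2}\bigl(1+\|h_{nx}\|\bigr)^{1/2} \le C\bigl(1+\|h_{nx}\|\bigr).
\end{equation*}
The critical term is $|g_n|_{p+2}^{p+2}$, where both $\|g_{nx}\|$ and $\|g_n\|$ appear: Gagliardo-Nirenberg gives
\begin{equation*}
|g_n|_{p+2}^{p+2} \le C\|g_{nx}\|^{p/2}\|g_n\|^{(p+4)/2} \le C\|g_{nx}\|^{p/2}\bigl(1+\|h_{nx}\|\bigr)^{(p+4)/4}.
\end{equation*}

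The main obstacle is closing the estimate in view of this last term, and this is precisely where the hypothesis $p<4/3$ enters. Starting from the identity
\begin{equation*}
\|h_{nx}\|^2 + \|g_{nx}\|^2 = E(h_n,g_n) + \beta_1|h_n|_{q+2}^{q+2} + \beta_2|g_n|_{p+2}^{p+2} + \alpha\int_{-\infty}^\infty|h_n|^2g_n\ dx,
\end{equation*}
I would apply Young's inequality with conjugate exponents $4/p$ and $4/(4-p)$ to the mixed factor $\|g_{nx}\|^{p/2}\bigl(1+\|h_{nx}\|\bigr)^{(p+4)/4}$, so as to absorb $\tfrac12\|g_{nx}\|^2$ into the left-hand side. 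This leaves
\begin{equation*}
\|h_{nx}\|^2 + \tfrac12\|g_{nx}\|^2 \le C\Bigl(1 + \|h_{nx}\|^{q/2} + \bigl(1+\|h_{nx}\|\bigr)^{(p+4)/(4-p)}\Bigr).
\end{equation*}
Both exponents on the right are strictly less than $2$: the first because $q<4$, and the second because $p<4/3$ is equivalent to $(p+4)/(4-p)<2$. It follows that $\|h_{nx}\|$ is bounded, and then $\|g_{nx}\|$ and $\|g_n\|$ are bounded as well, completing the proof.
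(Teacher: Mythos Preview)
Your proof is correct and follows essentially the same approach as the paper: bound $\|g_n\|^2$ by $C(1+\|h_{nx}\|)$ via the constraint $G(h_n,g_n)\to t$, estimate the nonlinear terms in $E$ by Gagliardo--Nirenberg, and close using that $p<4/3$ forces the resulting exponent below $2$. The only cosmetic difference is that the paper works with the single quantity $\|(h_n,g_n)\|_Y$ throughout (arriving at the exponent $(3p+4)/4<2$), whereas you separate $\|h_{nx}\|$ and $\|g_{nx}\|$ and use Young's inequality to absorb $\|g_{nx}\|^2$ first (arriving at $(p+4)/(4-p)<2$); both conditions are equivalent to $p<4/3$.
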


\begin{proof} Since $\|h_{n}\|^2=H(h_{n})$ is bounded, then
\begin{equation}
\begin{aligned}
\|g_n\|^2 = \left|G(h_n,g_n)-\text{Im}\int_{-\infty }^{\infty
}h_n(\overline{h_n})_x\ dx\right| &\leq C\left( 1+\|h_n\|
\cdot\|h_{nx}\| \right) \\
&\leq C\left( 1+\|(h_n,g_n)\|_Y\right),
\end{aligned}
\label{gnbound}
\end{equation}
where $C$ is independent of $n$.  Therefore
\begin{equation}
\begin{aligned}
\label{bdynorm} \|(h_{n},g_{n})\|_{Y}^{2}
&=E(h_{n},g_{n})+\int_{-\infty }^{\infty
}\left(\beta_1|h_n|^{q+2}+\beta_2 g_n^{p+2}+ \alpha
|h_n|^2g_{n}\right)\ dx
+\|h_n\|^2+\|g_n\|^2\\
&\le C\int_{-\infty}^\infty \left(|h_n|^{q+2} +|g_n|^{p+2} +
|h_n|^2|g_n|\right)\ dx +C\left(1+\|(h_n,g_n)\|_Y\right).
\end{aligned}
\end{equation}

From \eqref{gnbound} it follows that
\begin{equation*}
\begin{aligned}
\int_{-\infty }^{\infty }|g_n|^{p+2}\ dx &\leq
C\|g_{nx}\|^{p/2}\|g_n\|^{(p+4)/2} \\
&  \leq
C\left(\|(h_n,g_n)\|_Y^{p/2}+\|(h_n,g_n)\|_Y^{(3p+4)/4}\right).
\end{aligned}
\end{equation*}
On the other hand, as in \eqref{fLqbound}, we have
\begin{equation*}
\int_{-\infty }^{\infty }|h_n|^{q+2}\ dx
  \leq C\|h_{nx}\|^{q/2} \|h_n\|^{(q+4)/2}
  \le C\|(h_n,g_n)\|_Y^{q/2},
\end{equation*}
and, as in \eqref{mixtermbound},
\begin{equation*}
\int_{-\infty }^{\infty } |h_n|^{2}|g_{n}|\ dx\leq
C\|h_{nx}\|^{1/2}\|g_n\| \leq C\left( 1+\|(h_n,g_n)\|_Y\right).
\end{equation*}
Combining these estimates with \eqref{bdynorm} gives
\begin{equation*}
\begin{aligned}
&\|(h_n,g_n)\|_Y^2 \\
&\ \ \le C\left(1 +
\|(h_n,g_n)\|_Y+\|(h_n,g_n)\|_Y^{q/2}+\|(h_n,g_n)\|_Y^{p/2}+\|(h_n,g_n)\|_Y^{(3p+4)/4}\right),
\end{aligned}
\end{equation*}
and since $q<4$ and $p<4/3$, the exponents on the right-hand side
are all less than 2.  Hence $\|(h_n,g_n)\|_Y$ is bounded.
\end{proof}

We omit the proofs of the next two lemmas, which are identical to
the proofs of Lemmas 4.2 and 4.3 in \cite{[AA]}.

\begin{lem}\label{decom}
Suppose $k,\theta \in \mathbb{R}$ and $h\in H_{\mathbb{C}}^{1}$. If $f(x)=e^{i(kx+\theta )}h(x)$, then
\begin{equation*}
E(f,g)=E(h,g)+k^{2}H(h)-2k\ {\rm Im} \int_{-\infty }^{\infty }h\overline{h}%
_{x}\ dx
\end{equation*}
and
\begin{equation*}
G(f,g)=G(h,g)-kH(h).
\end{equation*}
\end{lem}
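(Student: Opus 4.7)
The plan is to verify both identities by direct pointwise computation applied to the substitution $f(x)=e^{i(kx+\theta)}h(x)$, followed by integration over $\mathbb{R}$. The first observation is that $|f|^2=|h|^2$ identically, so each of the terms $\beta_1|f|^{q+2}$, $\beta_2 g^{p+2}$, $\alpha|f|^2 g$, and $g_x^2$ in $E$ is invariant under the modulation, as is the $\int g^2\,dx$ piece of $G$. Consequently, the only quantities that need to be tracked are $\int|f_x|^2\,dx$ and $\mathrm{Im}\int f\,\overline{f}_x\,dx$.

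For the formula for $E(f,g)$, I would differentiate to obtain $f_x=e^{i(kx+\theta)}(ikh+h_x)$, and then expand
\begin{equation*}
|f_x|^2=|ikh+h_x|^2=|h_x|^2+k^2|h|^2+ik\bigl(h\,\overline{h}_x-\overline{h}\,h_{x}\bigr).
\end{equation*}
Applying the identity $z-\overline{z}=2i\,\mathrm{Im}\,z$ with $z=h\,\overline{h}_x$ reduces the cross term to $-2k\,\mathrm{Im}(h\,\overline{h}_x)$. Integrating, combining with the unchanged $g_x^2$ term, and subtracting the potential terms (which agree for $f$ and $h$) produces the stated expression $E(f,g)=E(h,g)+k^2 H(h)-2k\,\mathrm{Im}\int h\,\overline{h}_x\,dx$.

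For the formula for $G(f,g)$, a parallel computation gives
\begin{equation*}
f\,\overline{f}_x=h\bigl(-ik\overline{h}+\overline{h}_x\bigr)=-ik|h|^2+h\,\overline{h}_x,
\end{equation*}
so $\mathrm{Im}(f\,\overline{f}_x)=-k|h|^2+\mathrm{Im}(h\,\overline{h}_x)$. Integration and addition of $\int g^2\,dx$ then yield $G(f,g)=G(h,g)-kH(h)$. There is no substantive obstacle in this lemma: the entire argument is elementary pointwise algebra, and the constant phase $e^{i\theta}$ cancels in both computations because $|f|^2$ and $f\,\overline{f}_x$ depend only on the $kx$-portion of the exponent. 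The one place to exercise care is the sign in the cross term $ik(h\,\overline{h}_x-\overline{h}\,h_x)=-2k\,\mathrm{Im}(h\,\overline{h}_x)$, which is why I would carry out both expansions explicitly rather than invoking a symmetry shortcut.
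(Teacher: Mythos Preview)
Your proof is correct: the direct pointwise expansion of $|f_x|^2$ and $f\,\overline{f}_x$ under the substitution $f=e^{i(kx+\theta)}h$, together with the observation that $|f|=|h|$ leaves all other terms unchanged, is exactly the computation needed. The paper omits the proof entirely (referring to the identical argument in \cite{[AA]}), and what you have written is precisely that standard elementary calculation.
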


\begin{lem}
\label{WrelI}
 Suppose $s>0$ and $t\in \mathbb{R}$, and define
$b=b(a)=(t-a)/s$ for $a\geq 0$. Then
\begin{equation*}
W(s,t)=\inf_{a\geq 0}\{I(s,a)+b(a)^2s\}.
\end{equation*}
\end{lem}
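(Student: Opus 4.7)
The plan is to use Lemma \ref{decom} as a change-of-variables that ``absorbs'' the imaginary part of $\int h\overline{h}_x\,dx$ via multiplication by an exponential phase $e^{-ibx}$, and thus bijectively relates feasible pairs for the $W(s,t)$ problem to pairs feasible for $I(s,a)$ (for a suitable $a$) with vanishing imaginary integral. I would prove the two inequalities $W(s,t) \ge \inf_{a\geq 0}\{I(s,a)+b(a)^2 s\}$ and $W(s,t) \le \inf_{a\geq 0}\{I(s,a)+b(a)^2 s\}$ separately.

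For the first inequality, I would let $(h,g)\in Y$ satisfy $H(h)=s$ and $G(h,g)=t$, and set $a=\|g\|^2$. Then from the definition of $G$, $\operatorname{Im}\int h\overline{h}_x\,dx = t-a = b(a)s$. Define $\tilde h(x) = e^{ib(a)x}h(x)$, so that $h(x) = e^{i(-b(a))x}\tilde h(x)$. A short computation shows $\operatorname{Im}\int \tilde h\overline{\tilde h}_x\,dx = \operatorname{Im}\int h\overline{h}_x\,dx - b(a)s = 0$, and $\|\tilde h\|^2 = s$. Applying Lemma \ref{decom} with $k=-b(a)$, $\theta=0$, with the roles of $f$ and $h$ played by $h$ and $\tilde h$ respectively, yields
\[
E(h,g) = E(\tilde h,g) + b(a)^2 s.
\]
Since $(\tilde h,g)\in Y$ with $\|\tilde h\|^2=s$ and $\|g\|^2=a$, we have $E(\tilde h,g)\ge I(s,a)$. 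Taking the infimum over all such $(h,g)$ (and over the resulting values of $a\in[0,\infty)$) gives $W(s,t)\ge \inf_{a\ge 0}\{I(s,a)+b(a)^2 s\}$.

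For the reverse inequality, I would fix $a\ge 0$ and a small $\epsilon>0$, and choose $(f,g)\in Y$ with $\|f\|^2=s$, $\|g\|^2=a$, and $E(f,g) < I(s,a)+\epsilon$. By Lemma \ref{posmin}, replacing $f$ by $|f|$ and $g$ by $|g|$ does not increase $E$ and preserves the $L^2$ norms, so we may assume $f$ is real-valued. Then $\operatorname{Im}\int f\bar f_x\,dx = 0$ automatically. Now set $h(x) = e^{-ib(a)x}f(x)$; clearly $H(h)=s$, and a direct computation (or Lemma \ref{decom} with $\theta=0$, $k=-b(a)$, interchanging roles) gives $G(h,g) = a + b(a)s = t$ and
\[
E(h,g) = E(f,g) + b(a)^2 s < I(s,a)+b(a)^2 s + \epsilon.
\]
Thus $W(s,t) \le I(s,a)+b(a)^2 s + \epsilon$. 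Letting $\epsilon\to 0$ and taking the infimum over $a\ge 0$ yields $W(s,t)\le \inf_{a\ge 0}\{I(s,a)+b(a)^2 s\}$.

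The argument is essentially bookkeeping; the only subtlety is ensuring that $I(s,a)$ is attained in the limit by sequences with $\operatorname{Im}\int f\bar f_x\,dx = 0$, which is where Lemma \ref{posmin} enters to let us restrict to real-valued first components without raising $E$. The sign conventions in Lemma \ref{decom} (the minus sign on $2k$) must be tracked carefully, but once that is done the identity $E(h,g) = E(\tilde h,g) + b(a)^2 s$ drops out cleanly, and the two directions of the infimum match exactly.
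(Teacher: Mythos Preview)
Your proof is correct and follows essentially the same approach that the paper relies on: the paper omits the proof, referring to \cite{[AA]}, but the identical phase-shift computation (applying Lemma~\ref{decom} with $k=\pm b(a)$ to pass between the $W$- and $I$-constraints) appears explicitly in the proof of Lemma~\ref{auxi} just below. Your use of Lemma~\ref{posmin} to reduce to real-valued $f$ in the reverse inequality is exactly the natural device for ensuring the cross term $2k\,\mathrm{Im}\int f\bar f_x\,dx$ vanishes, and the bookkeeping with signs is handled correctly.
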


\begin{lem}\label{auxi}
Suppose $s>0$ and $t\in \mathbb{R}$, and define $b(a)=(t-a)/s$ for
$a\geq 0$. If $\{(h_{n},g_{n})\}$ is a minimizing sequence for
$W(s,t)$, then there exists  a subsequence (still denoted by
$\{(h_{n},g_{n})\}$) and a number $a \ge 0$ such that
\begin{equation*}
\lim_{n\to \infty}\|g_n\|^2 = a,
\end{equation*}
\begin{equation*}
\lim_{n \to \infty}E(e^{ib(a)x}h_n,g_n)=I(s,a),
\end{equation*}
and
\begin{equation}
W(s,t)=I(s,a)+b(a)^2s.
\label{wib}
\end{equation}
If $\beta_1 =0$, we can further assert that $a>0$.
\end{lem}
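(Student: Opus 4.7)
The plan is to extract a convergent subsequence of $\{\|g_n\|^2\}$ and use the gauge transformation of Lemma \ref{decom} to convert the minimizing sequence for $W(s,t)$ into an admissible sequence for $I(s,a)$. By Lemma \ref{bounded}, $\{(h_n,g_n)\}$ is bounded in $Y$, so after passing to a subsequence $\|g_n\|^2 \to a$ for some $a \ge 0$. Since $G(h_n,g_n) = \|g_n\|^2 + \mathrm{Im}\int h_n\overline{h_n}_x\,dx \to t$, we also have $\mathrm{Im}\int h_n\overline{h_n}_x\,dx \to t-a$. Setting $f_n(x) = e^{ib(a)x}h_n(x)$ ensures $\|f_n\|^2 = s$ for all $n$, and Lemma \ref{decom} applied with $k = b(a) = (t-a)/s$ yields
\begin{equation*}
\lim_{n\to\infty} E(f_n,g_n) \;=\; W(s,t) + b(a)^2 s - 2b(a)(t-a) \;=\; W(s,t) - b(a)^2 s.
\end{equation*}

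The next step is to establish $\liminf_{n\to\infty} E(f_n,g_n) \ge I(s,a)$, for together with the upper bound $W(s,t) - b(a)^2 s \le I(s,a)$ supplied by Lemma \ref{WrelI} this will give both $\lim E(f_n,g_n) = I(s,a)$ and the identity \eqref{wib}. If $a > 0$, I would rescale by $\tilde g_n = \sqrt{a/\|g_n\|^2}\,g_n$, so that $(f_n,\tilde g_n)$ is admissible for $I(s,a)$ and $E(f_n,\tilde g_n) \ge I(s,a)$; expanding $E(f_n,\tilde g_n)-E(f_n,g_n)$ explicitly and using the $Y$-boundedness together with the estimates \eqref{gLpbound} and \eqref{mixtermbound} shows that this difference tends to zero. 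If $a = 0$, then $\|g_n\| \to 0$ while $\|g_{nx}\|$ stays bounded, and those same Gagliardo--Nirenberg bounds force $\int g_n^{p+2}\,dx \to 0$ and $\int|f_n|^2 g_n\,dx \to 0$; discarding the non-negative term $\int g_{nx}^2\,dx$ then gives
\begin{equation*}
\liminf_{n\to\infty} E(f_n,g_n) \;\ge\; \liminf_{n\to\infty}\int_{-\infty}^{\infty}\left(|f_{nx}|^2 - \beta_1|f_n|^{q+2}\right)dx \;\ge\; I(s,0),
\end{equation*}
since each pair $(f_n,0)$ is admissible for $I(s,0)$.

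The main obstacle is the final assertion: when $\beta_1 = 0$, one must rule out $a = 0$. If $a=0$, then Lemma \ref{bdfnxbelow} gives $I(s,0)=0$, so \eqref{wib} would force $W(s,t)=t^2/s$, and a contradiction is obtained by exhibiting a competitor $(h,\lambda g_0)\in Y$ with $H(h)=s$, $G(h,\lambda g_0)=t$, and $E(h,\lambda g_0)<t^2/s$. To build it, fix a smooth non-negative $g_0$ with $\|g_0\|^2=1$ and $g_0(0)>0$, and a smooth non-negative $\psi$ with $\|\psi\|^2=s$; for small $\theta,\lambda>0$ set $\tilde h(x)=\theta^{1/2}\psi(\theta x)$ and $h(x)=e^{ikx}\tilde h(x)$ with $k=(t-\lambda^2)/s$, so that $H(h)=s$ and $G(h,\lambda g_0)=t$ both hold. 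Because $\tilde h$ is real and $\beta_1=0$, Lemma \ref{decom} yields
\begin{equation*}
E(h,\lambda g_0)-\frac{t^2}{s} \;=\; \theta^2\!\int\psi_x^2\,dx + \lambda^2\!\int g_{0x}^2\,dx - \beta_2\lambda^{p+2}\!\int g_0^{p+2}\,dx - \alpha\lambda\!\int \tilde h^2 g_0\,dx - \frac{2t\lambda^2}{s}+\frac{\lambda^4}{s}.
\end{equation*}
A change of variables and dominated convergence give $\int \tilde h^2 g_0\,dx \to s\,g_0(0)>0$ as $\theta\to 0^+$, so the order-$\lambda$ coupling contribution is genuinely negative. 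Choosing $\theta$ so small that $\theta^2\int\psi_x^2\,dx$ is negligible compared to $\lambda$, and then $\lambda$ small so that the remaining $O(\lambda^2)$ terms are dominated by the coupling, yields the strict inequality $W(s,t)<t^2/s$ and the desired contradiction. The delicate point here is engineering the competitor so that the $O(\lambda)$ coupling gain outpaces both the $O(\lambda^2)$ quadratic shift from $k^2 s$ and the gradient cost of $\tilde h$; the concentration scale $\theta$ provides the needed flexibility.
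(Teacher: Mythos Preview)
Your argument for the first two conclusions---extracting $a$, applying the gauge transformation, and sandwiching $\lim E(f_n,g_n)$ between $W(s,t)-b(a)^2s$ and $I(s,a)$---is correct and matches the paper's proof almost line for line. (Two small slips: a minimizing sequence for $W(s,t)$ has $H(h_n)\to s$, not $H(h_n)=s$, so in the case $a>0$ you should rescale $f_n$ as well as $g_n$; and your formula $k=(t-\lambda^2)/s$ has the wrong sign, though only $k^2$ enters the energy.)

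The genuine gap is in the final assertion. Your key claim,
\[
\int_{-\infty}^{\infty}\tilde h^2 g_0\,dx \;\longrightarrow\; s\,g_0(0)\quad\text{as }\theta\to 0^+,
\]
is false. With $\tilde h(x)=\theta^{1/2}\psi(\theta x)$, the probability-like density $\tilde h^2$ \emph{spreads} as $\theta\to 0$ (its support dilates by $1/\theta$), so after the change of variables $u=\theta x$ one gets $\int \psi(u)^2 g_0(u/\theta)\,du\to 0$ by dominated convergence, not $s\,g_0(0)$. The limit you wrote would require $\tilde h^2$ to concentrate at the origin, which happens as $\theta\to\infty$; but then $\theta^2\int\psi_x^2\,dx\to\infty$ and your balance collapses. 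In fact $\int\tilde h^2 g_0\,dx=\theta\int\psi(\theta x)^2 g_0(x)\,dx$ is only of order $\theta$, so the coupling gain is $O(\lambda\theta)$, not $O(\lambda)$, and your two-scale argument as written does not close.

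The paper sidesteps this entirely: once $a=0$ and $\beta_1=0$ force $I(s,0)=0$ and hence $W(s,t)=b(0)^2s\ge 0$, it simply takes $g_0$ from Lemma~\ref{minforJ} and the corresponding $f_0$ from Lemma~\ref{minfforg}, both real, so $H(f_0)=s$ and $G(f_0,g_0)=\|g_0\|^2=t$, and then $E(f_0,g_0)=\int(f_{0x}^2-\alpha f_0^2 g_0)\,dx+J(g_0)<0$ contradicts $W(s,t)\ge 0$. This is both shorter and avoids the delicate two-parameter balancing you attempt.
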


\begin{proof}
The sequence $a_n$ defined by
\begin{equation*}
a_n=\|g_n\|^2=G(h_n,g_n)-\text{Im}
\int_{-\infty }^{\infty }h_n\overline{h_{nx}}\ dx
=t-\text{Im}
\int_{-\infty }^{\infty }h_n\overline{h_{nx}}\ dx
\end{equation*}
is bounded, by Lemma~\ref{bounded}.  Hence, by passing to a
subsequence, we may assume that $a_n$ converges to a limit
$a\geq 0$. Let $b=b(a)$ and define $f_n(x)=e^{ibx}h_n(x)$. Then
from Lemmas \ref{decom} and \ref{WrelI} we have that
\begin{equation}
\label{lEleI}
\begin{aligned}
\lim_{n\to \infty }E(f_n,g_n)
&=\lim_{n\to\infty}\left(E(h_n,g_n)+b^2H(h_n)
- 2b\ \text{Im}\int_{-\infty}^{\infty }h_{n}\overline{h_{nx}}\ dx\right) \\
&=W(s,t)+b^2s-2b(t-a)=W(s,t)-b^2s\leq I(s,a).
\end{aligned}
\end{equation}

We claim that also
\begin{equation}
\label{lEgeI}
\lim_{n \to \infty}E(f_n,g_n) \ge I(s,a).
\end{equation}
For if $a>0$, then for sufficiently large $n$ we have that $\|f_n\|>0$ and $\|g_n\|>0$, so
the sequences $\beta_n = \sqrt{s}/\|f_n\|$ and $\theta_n = \sqrt{a}/\|g_n\|$
are defined, and both approach 1 as $n \to \infty$.  Since $\|\beta_n f_n\|^2=s$ and $\|\theta_n g_n\|^2
=a$, then $E(\beta_n f_n,\theta_n g_n)\ge I(s,a)$, and therefore
\begin{equation*}
\lim_{n\to \infty }E(f_{n},g_{n})=\lim_{n\to \infty
}E(\beta _{n}f_{n},\theta _{n}g_{n})\geq I(s,a).
\end{equation*}
On the other hand, if $a = 0$, then $\|g_n\| \to 0$ as $n \to \infty$, so it follows as in the proof
of Lemma \ref{bdgnxbelow} that \eqref{Ibbelow} holds: that is,
\begin{equation}
\label{minforszero}
\lim_{n\to \infty }E(f_n,g_n) =\lim_{n\to \infty }\int_{-\infty }^{\infty }\left( |f_{nx}|^{2}-\beta_1
|f_n|^{q+2}\right) \ dx \ge I(s,0).
\end{equation}
Hence \eqref{lEgeI} holds in either case.

All the assertions of the Lemma, except the last one, now follow from \eqref{lEleI} and \eqref{lEgeI}.

To prove the last assertion of the Lemma, assume to the contrary that $\beta_1=0$ and $a=0$.  From
Lemma \ref{bdfnxbelow} we know that $I(s,a)=0$, so from \eqref{wib} it follows that $W(s,t) \ge 0$.
But on the other hand, we can let $g_0$ be the function defined in Lemma \ref{minforJ}, and
$f_0$ be the corresponding function defined for this $g_0$ in Lemma \ref{minfforg}.   Then
$f_0$ is real, $\|f_0\|^2=s$, and $\|g_0\|^2=t$, so $H(f_0)=s$ and $G(f_0,g_0)=t$, and hence
$W(s,t) \le E(f_0,g_0)$.  Since
\begin{equation*}
E(f_0,g_0) =\int_{-\infty}^\infty\left(f_{0x}^2-\alpha f_0^2 g_0\right)\ dx + J(g_0) <0,
\end{equation*}
it follows that $W(s,t) <0$, giving the desired contradiction.
\end{proof}

We can now prove Theorem \ref{stability}.  Statement (i) of the theorem is an immediate consequence of Lemma \ref{auxi}.
To prove statement (ii), we start from a given subsequence and use Lemma \ref{auxi} to conclude that some subsequence
of $(f_n,g_n)=(e^{ibx}h_n,g_n)$ is a minimizing sequence for $I(s,a)$.

We claim that upon passing to a further subsequence, there exist real numbers $y_n$ such that
$(f_n(x+y_{n}),g_{n}(x+y_{n}))$
converges in $Y$ to some $(\phi,\psi)$ in $\mathcal S_{s,a}$.  If $a >0$, this follows immediately
from part (ii) of Theorem \ref{existence}.

If, on the other hand, $a=0$, then as in the proof of Lemma
\ref{auxi} we obtain \eqref{minforszero}.  But from \eqref{minforszero} we see that
\begin{equation*}
\lim_{n \to \infty} E(f_n,g_n)=\lim_{n \to \infty} E(f_n,0),
\end{equation*}
and since $E(f_n,g_n)$ converges to $I(s,0)$, this means that $(f_n,0)$ is a minimizing sequence for $I(s,0)$.
Since $a=0$, then Lemma \ref{auxi} implies that $\beta_1$ must be positive, so the claim follows
from Lemma \ref{Is0}.  Thus the claim has been proved in all cases.

Now, by passing to yet another subsequence, we may assume that $e^{iby_n}$ converges to
$e^{i\theta }$ for some $\theta \in [0,2\pi )$. Then $(h_{n}(.+y_{n}),g_{n}(.+y_{n}))$ converges to $(\Phi,\psi)$ in $Y$, where $\Phi(x)=e^{-i(bx+\theta )}\phi(x)$.
As  in \eqref{lEleI}, we have
\begin{equation}
\begin{aligned}
I(s,a) &=E(\phi,\psi)=E(\Phi,\psi)+b^2H(\Phi)-2b\ \ \text{Im}\int_{-\infty }^{\infty }
\Phi\overline {\Phi}_x\ dx \\
&=E(\Phi,\psi)+b^2s-2b\left(G(\Phi,\psi)-\|\psi\|^2\right) \\
&=E(\Phi,\psi)+b^2s-2b(t-s)=E(\Phi,\psi)-b^2s.
\end{aligned}
\end{equation}
It then follows from \eqref{wib} that $E(\Phi,\psi)=W(s,t)$, and hence that $(\Phi,\psi) \in
\mathcal{F}_{s,t}$.

Part (iii) of the Theorem follows from the Lagrange multiplier principle, just as did part (iii) of
Theorem \ref{existence}.

Next we prove part (iv) of Theorem \ref{stability}. Suppose
$(\Phi,\psi) \in \mathcal{F}_{s,t}$.  Applying Lemma \ref{auxi} to
the minimizing sequence for $W(s,t)$ defined by setting
$(h_n,g_n)=(\Phi,\psi)$ for all $n \in \mathbb N$, we obtain that
$(e^{ibx}\Phi,\psi)$ is a minimizing sequence for $I(s,a)$,
where $a = \|g\|^2$ and $b=(t-a)/s$. Therefore
$(e^{ibx}\Phi,\psi) \in \mathcal{S}_{s,a}$.  Hence by part (iv)
of Theorem \ref{existence}, there exist a number $\theta \in
\mathbb R$ and a real-valued function $\tilde \phi$ such that
$e^{ibx}\Phi(x)= e^{i\theta}\tilde\phi(x)$.  So
\begin{equation*}
\Phi(x)= e^{i(-bx+\theta)}\tilde\phi(x),
\end{equation*}
which is \eqref{formofphi}.  In case $\tau_1 = 0$, then $\beta_1=0$ and it follows from Lemma \ref{auxi}
that $a>0$.  Since $(\tilde \phi,\psi) \in \mathcal S_{s,a}$, it follows from part (iv) of Theorem \ref{existence}
that $\psi(x)>0$ on $\mathbb R$, and that either $\tilde \phi(x)>0$ for all $x \in \mathbb R$ or $\tilde \phi(x)<0$ for
all $x \in \mathbb R$.  In the latter case, we can add $\pi$ to the value of $\theta$ and replace $\tilde \phi$ by $e^{i\theta}\tilde \phi$
to get that $\tilde \phi$ is positive on $\mathbb R$.

Part (v) of Theorem \eqref{stability},
concerning the stability of $\mathcal F_{s,t}$, follows from
part (ii) by a standard argument, which we repeat here for
completeness.

Suppose that $\mathcal{F}_{s,t}$ is not stable. Then there exist a
number $\epsilon >0$ and sequences $(h_n,g_n)$ of initial data in
$Y$ and times $t_n\geq 0$ such that, for all $n \in \mathbb{N}$,
\begin{equation}
\label{idconvtoF} \inf\{\|(h_n,g_n)-(h,g)\|_Y : (h,g)\in
\mathcal{F}_{s,t}\}<\frac{1}{n};
\end{equation}
while the solutions $(u_n(x,t),v_n(x,t))$ of \eqref{NLSKDV} with
initial data
\begin{equation*}
(u_n(x,0),v_n(x,0))=(h_n(x),g_n(x))
\end{equation*}
satisfy
\begin{equation}\label{contra}
\inf\{\|(u_n(\cdot,t_n),v_n(\cdot,t_n)-(h,g)\|_Y : (h,g)\in
\mathcal{F}_{s,t}\} \geq \epsilon
\end{equation}
for all $n \in \mathbb N$.

 From \eqref{idconvtoF} and Lemma
\ref{Econt} we have that
\begin{equation} \label{initdata}
\begin{aligned}
\lim_{n\to \infty }E(h_n,g_n)&= W(s,t),\\
\lim_{n\to \infty }H(h_n) &=s,\\
\lim_{n\to \infty }G(h_n,g_n)&=t.
\end{aligned}
\end{equation}
Let us denote $u_n(\cdot,t_n)$ by $U_n$ and $v_n(\cdot,t_n)$ by
$V_n$. Since $E(u,v)$, $G(u,v)$, and $H(u)$ are independent of
$t$, then \eqref{initdata} implies
\begin{equation*}
\begin{aligned}
\lim_{n \to \infty }E(U_n,V_n)&=W(s,t),\\
\lim_{n \to \infty }H(U_n)&=s,\\
\lim_{n\to \infty }G(U_n,V_n)&=t,
\end{aligned}
\end{equation*}
which means that $\{(U_n,V_n)\}$, like $\{(h_n,g_n)\}$, is a
minimizing sequence for $W(s,t)$.

Now part (ii) of Theorem \ref{stability} tells us that there
exists a subsequence $\{(U_{n_k},V_{n_k})\}$, a sequence of real
numbers $\{y_k\}$, and a function pair $(\Phi,\psi) \in
\mathcal{F}_{s,t}$ such that
\begin{equation}
\lim_{k \to \infty}
\|(U_{n_k}(\cdot+y_k),V_{n_k}(\cdot+y_k))-(\Phi,\psi)\|_Y=0.
\end{equation}
So, for some sufficiently large $k$,
\begin{equation*}
\|(U_{n_k}(\cdot+y_k),V_{n_k}(\cdot+y_k))-(\Psi,\psi)\|_Y<\epsilon,
\end{equation*}
and hence
\begin{equation}
\label{penult}
\|(U_{n_k},V_{n_k})-(\Phi(\cdot-y_k),\psi(\cdot-y_k))\|_Y<\epsilon.
\end{equation}
But $(\Phi(\cdot-y_k),\psi(\cdot-y_k))$ is also in
$\mathcal{F}_{s,t}$, and hence \eqref{penult} gives
\begin{equation*}
\inf\{\|(U_{n_k},V_{n_k})-(h,g)\|_Y : (h,g)\in
\mathcal{F}_{s,t}\}<\epsilon.
\end{equation*}
Since this contradicts \eqref{contra}, we conclude that $\mathcal
F_{s,t}$ must in fact be stable.

It remains only to prove the last assertion (vi) of Theorem \ref{stability},
namely, that the sets $\mathcal F_{s,t}$ form a true two-parameter
family.  Suppose $(\Phi_1,\psi_1) \in \mathcal{F}_{s_1,t_1}$ and
$(\Phi_2,\psi_2) \in \mathcal{F}_{s_2,t_2}$, where $(s_1,t_1) \ne
(s_2,t_2)$.  We want to show $(\Phi_1,\psi_1) \ne
(\Phi_2,\psi_2)$.  If $s_1 \ne s_2$, the conclusion is obvious,
since then $\|\Phi_1\|^2 \ne \|\Phi_2\|^2$.  So we can assume $s_1
= s_2$ and $t_1 \ne t_2$. From part (iv), if we let $\eta_i =
(\|\psi_i\|^2-t_1)/s_i$ for $i=1,2$; then there exist numbers
$\theta_1$ and $\theta_2$ and real-valued functions $\tilde
\phi_1$ and $\tilde \phi_2$ such that
\begin{equation}
\Phi_1(x) = e^{i(\eta_1 x + \theta_1)}\tilde\phi_1(x) \quad\text{and}\quad
\Phi_2(x) = e^{i(\eta_2 x + \theta_2)}\tilde\phi_2(x)
\end{equation}
on $\mathbb R$.  We may assume that $\Phi_1 = \Phi_2$, or else we are done.  Then
\begin{equation*}
e^{i((\eta_1-\eta_2)x+(\theta_1-\theta_2))}=\tilde\phi_2(x)/\tilde \phi_1(x)
\end{equation*}
is real-valued on $\mathbb R$, and hence $\eta_1$ must equal $\eta_2$.  Since $s_1 =s_2$,
this implies that $\|\psi_1\|^2 -t_1= \|\psi_2\|^2-t_2$.  But $t_1 \ne t_2$, so therefore
$\|\psi_1\|^2 \ne \|\psi_2\|^2$, and hence $\psi_1 \ne \psi_2$, as desired.

The proof of Theorem \ref{stability} is now complete.

\section*{Acknowledgement}
The authors would like to thank Daniele Garrisi for an important
conversation, and in particular for bringing Lemma \ref{garlem} to
their attention.


\begin{thebibliography}{10}

\bibitem{[A]} J. Albert, \emph{Concentration compactness and the stability
of solitary-wave solutions to non-local equations}, in \emph{Applied
analysis} (ed. J. Goldstein et al.), pp. 1--29, American
Mathematical Society, Providence, 1999.

\bibitem{[AA]} J. Albert and J. Angulo Pava, \emph{Existence and stability
of ground-state solutions of a Schr\"{o}dinger-KdV system}, Proc. of the
Royal Soc. of Edinburgh, \textbf{133A} (2003), pp.\ 987--1029.

\bibitem{[An]} J. Angulo Pava, \emph{Stability of solitary wave solutions for equations of short and long dispersive waves},
Electron. J. Differential Equations, No. 72, (2006), 18 pp. (electronic).

\bibitem{[AV]} K. Appert, J. Vaclavik, \emph{Dynamics of coupled solitons}, Phys. Fluids {\bf 20} (1977), 1845--1849.

\bibitem{[BOP]} D. Bekiranov, T. Ogawa, G. Ponce, \emph{Weak solvability and well-posedness of
a coupled Schr\"{o}dinger-Korteweg-de Vries equation for capillary-gravity wave interaction},
Proc. Amer. Math. Soc., \textbf{125} (1997), 2907--2919.

\bibitem{[By]} J. Byeon, \emph{Effect of symmetry to the structure of positive solutions in nonlinear
elliptic problems}, J. Differential Equations, \textbf{163} (2000), 429-–474.

\bibitem{[Ca]} T. Cazenave, \emph{Semilinear Schrödinger equations}, Courant Lecture Notes in Mathematics, vol.\ {\textbf 10}, American Mathematical Society, Providence, 2003.

\bibitem{[C]} L. Chen, \emph{Orbital stability of solitary waves of the nonlinear Schr¨odinger-KDV equation}. J.
Partial Diff. Eqs. \textbf{12} (1999), 11–-25.

\bibitem{[CL]} A.J. Corcho and F. Linares, \emph{Well-posedness for the
Schr\"{o}dinger-Korteweg-de Vries system}, Trans. Amer. Math. Soc.,
\textbf{359} (2007), 4089--4106.

\bibitem{[Co]} J.-M. Coron, \emph{The continuity of the rearrangement in $W^{1,p}(\mathbf R)$}, Ann. Scuola Norm. Sup. Pisa Cl. Sci. $\textbf{11}\ (1984),\ 57–85.$
$\textbf{12}\ (1987),\ 1133-1173.$


\bibitem{[G]} D. Garrisi, \textit{On the orbital stability of standing-waves solutions to a
coupled non-linear Klein-Gordon equation}, arXiv : 1009.2281v2, 2011.

\bibitem{[DFO]} J. P. Dias, M. Figueira and F. Oliveira, \emph{Well-posedness and
existence of bound states for a coupled Schr\"{o}dinger-gKdV system},
Nonlinear Analysis \textbf{73} (2010), 2686--2698.

\bibitem{[F]} A. Friedman, \emph{Partial Differential Equations}, Krieger, New York, 1976.

\bibitem{[FO]} M. Funakoshi and M. Oikawa, \emph{The resonant interactions
between a long internal gravity wave and a surface gravity wave packet.} J.
Phys. Soc. Japan, \textbf{52} (1983), 1982--1995.

\bibitem{[KSK]} T. Kawahara, N. Sugimoto and T. Kakutani, \emph{Nonlinear
interaction between short and long capillary-gravity waves}, J. Phys. Soc.
Japan, \textbf{39} (1975), 1379--1386.

\bibitem{[LL]} E. H. Lieb and M. Loss, \emph{Analysis, 2nd ed.}, Graduate studies in mathematics, vol.\ \textbf{14}, American
Mathematical Society, Providence, 2001.

\bibitem{[L]} P. L. Lions, \emph{The concentration-compactness principle in
the calculus of variations. The locally compact case, Part 1 and 2,
Ann. Inst. H. Poincar\'e Analyse Non Lin\'{e}aire}, \textbf{1} (1984), 104--45 and 223--283.

\bibitem{[Lu]} D. Luenberger, \emph{Optimization by Vector Space Methods}, Wiley, New York, 1969.

\bibitem{[M]} P. Miller, \emph{Applied Asymptotic Analysis.} Graduate studies in mathematics, vol.\ \textbf{75}, American Mathematical Society, Providence, 2006.

\bibitem{[N]} A. Newell, \emph{Solitons in Mathematics and Physics.} CBMS vol. \textbf{48}, Society for Industrial and Applied Mathematics, Philadelphia, 1985.

\bibitem{[NHMI]} K. Nishikawa, H. Hojo, K. Mima and H. Ikezi. \emph{Coupled
nonlinear electron-plasma and ion-acoustic waves}, Phys. Rev. Letters \textbf{33}
(1974), 148--151.

\bibitem{[T]} T. Tao, \emph{Nonlinear Dispersive Equations: Local and Global Analysis}, CBMS vol.\ \textbf{106}, American Mathematical Society, Providence, 2006.

\bibitem{[W]} G. Whitham, \emph{Linear and Nonlinear Waves}, Wiley, 1974.

\end{thebibliography}
\end{document}